\newtheorem{theorem}{Theorem}[section]
\newtheorem{corollary}[theorem]{Corollary}
\newtheorem{lemma}[theorem]{Lemma}
\newtheorem{proposition}[theorem]{Proposition}
\numberwithin{equation}{section}
\theoremstyle{definition}
\newtheorem{definition}[theorem]{Definition}
\newtheorem{assumption}[theorem]{Assumption}
\theoremstyle{plain}
\newtheorem{example}[theorem]{Example}
\newtheorem{remark}[theorem]{Remark}
\definecolor{winered}{rgb}{0.6,0,0}
\definecolor{ocean}{rgb}{0,0.1,0.6}
\definecolor{forest}{rgb}{0,0.5,0.1}
\definecolor{fill}{rgb}{.92,1,.92}
\definecolor{sunset}{rgb}{.8,0.3,0}
\definecolor{lightgray}{rgb}{.92,0.96,0.92}
\def\ocean#1{\textcolor{ocean}{#1}}
\def\forest#1{\textcolor{forest}{#1}}
\def\sunset#1{\textcolor{sunset}{#1}}
\newcommand{\norm}[1]{\left\lVert#1\right\rVert}
\newcommand{\abs}[1]{\left\lvert#1\right\rvert}
\newcommand{\normbT}[1]{\left\lVert#1\right\rVert_\bT}
\newcommand\cA{\mathcal{A}}
\newcommand\cC{\mathcal{C}}
\newcommand\cD{\mathcal{D}}
\newcommand\cF{\mathcal{F}}
\newcommand\cG{\mathcal{G}}
\newcommand\cI{\mathcal{I}}
\newcommand\cJ{\mathcal{J}}
\newcommand\cK{\mathcal{K}}
\newcommand\cO{\mathcal{O}}
\newcommand\cS{\mathcal{S}}
\newcommand{\GUG}{\cS_\Upsilon^\Gamma}
\newcommand\cT{\mathcal{T}}
\newcommand\cU{\mathcal{U}}
\newcommand\cV{\mathcal{V}}
\newcommand\cW{\mathcal{W}}
\newcommand\cX{\mathcal{X}}
\newcommand{\Zm}{\boldsymbol Z}
\newcommand{\Ym}{\boldsymbol Y}
\newcommand{\Am}{\boldsymbol A}
\newcommand{\BS}{\mathrm{BS}}
\newcommand{\Ac}{\cA\cC}
\newcommand{\Aco}{\Ac_{0}}
\newcommand{\bb}{\mathfrak{b}}
\newcommand\bE{\mathbb{E}}
\newcommand\bN{\mathbb{N}}
\newcommand\bP{\mathbb{P}}
\newcommand\bR{\mathbb{R}}
\newcommand\bT{\mathbb{T}}
\newcommand{\D}{\mathrm{d}}
\newcommand{\E}{\mathrm{e}}
\newcommand{\half}{\frac{1}{2}}
\newcommand{\ep}{\varepsilon} 
\newcommand{\vphi}{\varphi} 
\newcommand{\notthis}[1]{}
\newcommand{\one}{\mathbbm{1}}
\newcommand{\overbar}[1]{\mkern 3mu\overline{\mkern-3mu#1\mkern-1mu}\mkern 1mu}
\newcommand{\dt}{\,\mathrm{d}t}
\newcommand{\ds}{\,\mathrm{d}s}
\newcommand{\du}{\,\mathrm{d}u}
\newcommand{\DD}{\mathrm{D}}
\newcommand{\II}{\mathrm{I}}
\newcommand{\LDP}{\mathrm{LDP}}
\newcommand{\MDP}{\mathrm{MDP}_\beta}
\newcommand{\rrho}{\overline{\rho}}
\begin{document}

\title{Large and moderate deviations for stochastic Volterra systems}
\date{\today}
\author{Antoine Jacquier}
\address{Department of Mathematics, Imperial College London, and Alan Turing Institute}
\email{a.jacquier@imperial.ac.uk}
\author{Alexandre Pannier}
\address{Department of Mathematics, Imperial College London}
\email{a.pannier17@imperial.ac.uk}
\thanks{}
\subjclass[2010]{60F10, 60G22, 91G20}
\keywords{stochastic Volterra equations, large deviations, moderate deviations, weak convergence, rough volatility}
\maketitle

%%%%%%%%%%%%%%%%%%%%%%%%%%%%%%%%%%%%%%%%%%%%%%%%%%%%%%%%%%%%%%%%%%%%%%%%%%%%%%%%%%%%%%%%%
\setcounter{tocdepth}{3}
%\tableofcontents

%%%%%%%%%%%%%%%%%%%%%%%%
\begin{abstract}
We provide a unified treatment of pathwise large and moderate deviations principles for a general class of multidimensional stochastic Volterra equations with singular kernels, not necessarily of convolution form. 
Our methodology is based on the weak convergence approach by Budhijara, Dupuis and Ellis~\cite{BD19, DE97}.
We show in particular how this framework encompasses most rough volatility models used in mathematical finance, yields pathwise moderate deviations for the first time and generalises many recent results in the literature.
\end{abstract}

%%%%%%%%%%%%%%%%%%%%%%%%%%
\section{Introduction}

This paper sheds new light on the asymptotic behaviour of the class of stochastic Volterra equations (SVEs) 
\begin{equation}
    X_t = X_0 + \int_0^t K(t,s)  b(s,X_s)\D s + \int_0^t K(t,s)  \sigma(s,X_s)\D W_s, \quad t\in[0,T],
    \label{eq:SVEintro}
\end{equation}
for some fixed time horizon~$T>0$,
where $X_0\in\bR^d$, $d\ge1$, $W$ is a multidimensional Brownian motion,
$K$ is a kernel that may be singular, and the coefficients are such that a unique pathwise solution exists. 
This class of models has been investigated in many fields, including nonlinear filtering~\cite{CD99} 
using fractional Brownian motion kernels,
pharmacokinetic models~\cite{Marie14} (Langevin equation driven by fractional Brownian motion),
fluid turbulence~\cite{Chevillard17},
and turbulence modelling in atmospheric winds or energy prices~\cite{BNPJ14,CHPP13}
using Brownian semistationary processes. 

Mathematical finance has however been the most dynamic area by far in terms of applications of SVEs,
and an in-depth study of~\eqref{eq:SVEintro} in the affine case with convolution kernels was recently carried out
by Abi Jaber, Larsson and Pulido~\cite{ALP17}.
Following previous analyses supporting non-Markovian systems \cite{Alos07,CV12a,CV12b,Comte98,Comte12,Fukasawa11}, the investigation of high-frequency data in~\cite{GJR14} revealed the roughness, in the sense of low H\"older regularity, of the observed time series of the instantaneous volatility of stock price processes. 
This suggested that fractional Brownian motion (fBm) with small Hurst parameter ($H\approx 0.1$)
is an accurate driver for its dynamics.
Since this seminal observation, more advanced results~\cite{EFR18} have proposed that the drift and the diffusion coefficients should be state dependent, 
giving rise to the widespread development of~\eqref{eq:SVEintro} in quantitative finance.

For option pricing purposes, the asymptotic results in~\cite{Alos07,BFGHS17,Fukasawa17} 
showed that the short-maturity behaviour of option prices is captured much more accurately
by these rough volatility models rather than by Markovian diffusions. 
Reconciling the stylised facts of the markets from both the statistical and the option pricing viewpoints is the tour de force that make these models so important today.
However, the loss in tractability compared to classical It\^o diffusions is not negligible. 
The solution to~\eqref{eq:SVEintro} is in general not a semimartingale nor a Markov process, 
preventing the use of It\^o calculus or Feynman-Kac type formulas. 
Path-dependent versions of the latter are available in some cases, in particular for affine rough volatility models~\cite{ALP17, Cuchiero18, ER19, GatheralKR}, 
but general results are scarce~\cite{VZ18}.
Rough path theory is a natural route but is not available for $H\le 1/4$, 
although a regularity structure approach was recently developed~\cite{BFGMS19}. 
In this context, one could turn to numerical methods to understand the dynamics or to price options but, despite new advances based on Monte-Carlo methods~\cite{BFG16,BLP15,MP18}, rough Donsker theorem~\cite{HJM17} or Fourier methods~\cite{ER19},
the roughness and memory of the process seriously complicate the task. 

Asymptotic methods have been used, both to provide clearer understanding of models in extreme parameter configurations 
and to act as proxies to numerical schemes.
Large Deviations Principles (LDP), in particular, have been widely explored in mathematical finance, 
and we refer the interested reader to~\cite{FGGJTBook} for an overview.
Let $\{X^\ep\}_{\ep>0}$ be a sequence of random variables in some Polish space~$\cX$, converging in probability to a deterministic limit~$\overbar{X}$ as~$\ep$ goes to zero. 
This sequence is said to satisfy an LDP with speed $\ep^{-1}$ and rate function $I:\cX\to[0,+\infty]$ if for all Borel subsets $B\subset\cX$, the inequalities
\begin{align*}
-\inf_{x\in B^\circ} I(x) \le 
\liminf_{\ep \downarrow 0} \ep \log \bP \big( X^\ep\in B \big) 
\le \limsup_{\ep \downarrow 0} \ep \log \bP \big( X^\ep\in B \big) 
\le -\inf_{x\in \overbar{B}} I(x)
\end{align*}
hold, and the level sets $\{x\in\cX: I(x)\le N\}$ of~$I$ are compact for all $N>0$. 
This rate function encompasses in a (relatively) concise formula first-order information about the asymptotic behaviour of complex dynamical systems. 
If~$X$ satisfies~\eqref{eq:SVEintro} and $\cX=\bR^d$, one can consider finite-dimensional LDP for $\{X_t\}_{t\ge0}$ (also called small-time LDP if the limit takes place as~$t$ goes to zero), 
or pathwise LDP for some rescaling of~$X$ with $\cX=\cC([0,T]:\bR^d)$. 
The former is easily recovered from the latter by a projection argument. 
Moderate deviations however are concerned with deviations of a lower order than large deviations, 
and thus apply to `less rare events'.
We indeed say that $\{X^\ep\}_{\ep>0}$ satisfies a moderate deviations principle (MDP) if $\{\eta^\ep\}_{\ep>0}$ satisfies an LDP 
with speed~$h_\ep^2$, where
\begin{equation*}
\eta^\ep := \frac{X^\ep-\overbar X}{\sqrt{\ep}  h_\ep}, \quad \text{for all } \ep>0,
\label{eq:MDPintro}
\end{equation*}
with $\lim_{\ep\downarrow0} h_\ep   = +\infty$ and $\lim_{\ep\downarrow0} \sqrt{\ep} h_\ep   =0$. Since the speed of convergence of $h_\ep$ is not fixed, an MDP essentially bridges the gap between the central limit regime where $h_\ep  =1$ and the LDP regime where $h_\ep  =\ep^{-1/2}$. 
An edifying example of the relevance of moderate deviations appears in~\cite{FGeP18} where, interested in option pricing asymptotics, the authors judiciously rescale the strikes with respect to time to expiry. Indeed, as time to expiry becomes smaller, the range of pertinent strikes naturally shrinks, 
and this `moderately-out-of-the-money' regime becomes more realistic.

Large deviations for SVEs were originally studied in~\cite{Nualart00,Rovira00} with regular kernels.
In the context of rough volatility, Forde and Zhang~\cite{FZ15} introduced the first finite-dimensional LDP where the log-volatility is modelled by a fractional Brownian motion, and refined versions followed in~\cite{BFGMS19,BFGHS17,FGaP18},
while pathwise LDP for similar models were studied in~\cite{Cellupica19, Gulisashvili18}. 
Departing from regular conditions on the behaviour of the coefficients led to specific requirements, 
and finite-dimensional large deviations for the fractional Heston model were carried out in~\cite{FGS19, GJRS18}, 
while more elaborate pathwise LDPs were derived for the rough Stein-Stein model with random starting point~\cite{HJL19},
for the rough Bergomi model~\cite{JPS18}, 
and small-time LDPs for the multi-factor rough Bergomi appeared in~\cite{LMS19}. We emphasise at this point that no pathwise MDP was previously known in the context of rough volatility.

The G\"artner-Ellis theorem~\cite[Theorem 2.3.6]{DZ10} is the main ingredient of a finite-dimensional LDP and depends on explicit computations of certain limits of the Laplace transform. 
This is only available though, when the process is either Gaussian~\cite{FZ15} or affine~\cite{FGS19}. 
Pathwise LDP on the other hand, have mainly been derived using the Freidlin-Wentzell approach~\cite{Freidlin84}:
starting from known large deviations for the driving (Gaussian) process~\cite[Theorem 3.4.5]{DS89},
they follow from a combination of approximations and continuous mapping, 
keeping track of the rate function.   
While this methodology is clear, 
it requires a case-by-case tailored path for each model, and in general leads to a cumbersome rate function. 
Furthermore, pathwise moderate deviations are so far out of reach in this approach, 
partially explaining the small number of related results compared to LDP.

A radically different method, introduced by Dupuis and Ellis in the monograph~\cite{DE97} and developed further by Budhiraja and Dupuis~\cite{BD19}, relies on the equivalence between the LDP and the Laplace principle.
The family $\{X^\ep\}_{\ep>0}$ is said to satisfy the Laplace principle with speed~$\ep^{-1}$ and rate function $I:\cX\to[0,+\infty]$ if for all continuous bounded maps $F:\cX\to\bR$,
\begin{equation}
    \lim_{\ep\downarrow0} - \ep \log \bE\left[\exp\left\{-\frac{F(X^\ep)}{\ep}\right\}\right] =  \inf_{x\in\cX}\big\{I(x)+F(x)\big\}.
    \label{eq:Laplace}
\end{equation}
This alternative, called the weak convergence approach, consists in proving a Laplace principle where the left-hand side pre-limit of~\eqref{eq:Laplace} can be represented as a variational principle for expectations of functionals of Brownian motion~\cite[Theorem 3.1]{boue98}:
\begin{lemma}[Bou\'e-Dupuis]
Let $W$ be an $\bR^m$-Brownian motion and $F$ be a bounded Borel-measurable function mapping $\cC([0,T]:\bR^m)$ into $\bR$. Then
\begin{equation}
- \log \bE \left[ \E^{-F(W)}\right] 
=\inf_{v\in\cA} \bE \left[\half \int_0^T \abs{v_s}^2\ds + F\bigg(W + \int_0^\cdot v_s \ds\bigg) \right],
\label{eq:boue}
\end{equation}
where 
\begin{equation}
\cA := \bigg\{v:\Omega\times [0,T]\to\bR^m \text{ progressively measurable, }\bE\left[ \int_0^T |v_t|^2\dt \right] < +\infty \bigg\}.
\label{eq:cA}
\end{equation}
\end{lemma}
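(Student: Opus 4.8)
The plan is to obtain~\eqref{eq:boue} from two classical ingredients, the Donsker--Varadhan (Gibbs) convex-duality formula for relative entropy and Girsanov's theorem on Wiener space, following Bou\'e and Dupuis~\cite{boue98}; I only sketch the architecture and indicate where the genuine work lies. Write $\bP$ for the Wiener measure on $\cC([0,T]:\bR^m)$, $W$ for the canonical process, $(\cF_t)$ for its augmented filtration, and, for a probability measure $\bQ$ on this space, $H(\bQ\,|\,\bP):=\bE_{\bQ}[\log(\D\bQ/\D\bP)]$ if $\bQ\ll\bP$ and $+\infty$ otherwise. Since $F$ is bounded and measurable, the Gibbs variational principle yields
\begin{equation*}
-\log\bE_{\bP}\!\left[\E^{-F(W)}\right]=\inf_{\bQ}\Big\{\bE_{\bQ}\big[F(W)\big]+H(\bQ\,|\,\bP)\Big\},
\end{equation*}
the infimum running over all probability measures $\bQ$: the bound ``$\le$'' follows by applying Jensen's inequality to $\log$ under an arbitrary $\bQ\ll\bP$, and equality is attained at the Gibbs measure $\bQ^\star$, with $\D\bQ^\star/\D\bP=\E^{-F(W)}/\bE_{\bP}[\E^{-F(W)}]$, whose relative entropy is finite because $\norminf{F}<\infty$. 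It therefore suffices to match, in both directions, the quantity $\bE_{\bQ}[F(W)]+H(\bQ\,|\,\bP)$ with an expression $\bE_{\bP}[\half\int_0^T\abs{v_s}^2\ds+F(W+\int_0^\cdot v_s\ds)]$ with $v\in\cA$.

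\textbf{The bound ``$\le$''.} Take first $v\in\cA$ bounded, so that Novikov's condition holds; then $\D\bQ^v/\D\bP:=\exp(\int_0^T v_s\D W_s-\half\int_0^T\abs{v_s}^2\ds)$ defines a probability measure $\bQ^v$ under which, by Girsanov's theorem, $\widetilde W:=W-\int_0^\cdot v_s\ds$ is a Brownian motion, and a direct computation gives $H(\bQ^v\,|\,\bP)=\half\bE_{\bQ^v}[\int_0^T\abs{v_s}^2\ds]$. Since $v$ is adapted and $W=\widetilde W+\int_0^\cdot v_s\ds$ under $\bQ^v$, it is a progressively measurable functional of $\widetilde W$; as $\widetilde W$ is distributed under $\bQ^v$ as $W$ is under $\bP$, transferring every term to a fresh Brownian motion recasts $\bE_{\bQ^v}[F(W)]+H(\bQ^v\,|\,\bP)$ as $\bE_{\bP}[\half\int_0^T\abs{v'_s}^2\ds+F(W+\int_0^\cdot v'_s\ds)]$ for some $v'\in\cA$, whence $-\log\bE_{\bP}[\E^{-F(W)}]\le\bE_{\bP}[\half\int_0^T\abs{v'_s}^2\ds+F(W+\int_0^\cdot v'_s\ds)]$ by the ``$\le$'' half of the Gibbs formula. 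The controls $v'$ so produced form a class dense in $\cA$ (given a sufficiently regular $w$, a preimage is $v=w(X)$ with $\D X=-w(X)\dt+\D W$), and a limiting argument --- leaning on $\norminf{F}<\infty$ --- extends the inequality to every $v\in\cA$.

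\textbf{The bound ``$\ge$''.} Here it suffices to exhibit a single (indeed optimal) control. Applying the martingale representation theorem to the positive, bounded $\bP$-martingale $N_t:=\bE_{\bP}[\E^{-F(W)}\mid\cF_t]$ gives $\D N_t=N_t\,\rho_t\cdot\D W_t$, whence $\D\bQ^\star/\D\bP=\exp(\int_0^T\rho_s\D W_s-\half\int_0^T\abs{\rho_s}^2\ds)$; since $N$ is bounded away from $0$ and $\infty$, $\rho\in\cA$ and $\bE_{\bQ^\star}[\int_0^T\abs{\rho_s}^2\ds]<\infty$. By Girsanov, $\widetilde W:=W-\int_0^\cdot\rho_s\ds$ is a $\bQ^\star$-Brownian motion, $H(\bQ^\star\,|\,\bP)=\half\bE_{\bQ^\star}[\int_0^T\abs{\rho_s}^2\ds]$, and transferring to a fresh Brownian motion exactly as above produces $v^\star\in\cA$ with
\begin{equation*}
\bE_{\bP}\!\left[\half\int_0^T\abs{v^\star_s}^2\ds+F\Big(W+\int_0^\cdot v^\star_s\ds\Big)\right]=H(\bQ^\star\,|\,\bP)+\bE_{\bQ^\star}[F(W)]=-\log\bE_{\bP}\!\left[\E^{-F(W)}\right],
\end{equation*}
the last equality by the attainment in the Gibbs formula. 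Together with the previous bound this proves~\eqref{eq:boue}, and shows the infimum is attained.

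\textbf{Main obstacle.} The crux, underlying both bounds, is the dictionary between a path-space measure and an adapted drift: every $\bQ\ll\bP$ of finite relative entropy decomposes as $W=B^{\bQ}+\int_0^\cdot u_s\ds$, with $B^{\bQ}$ a $\bQ$-Brownian motion, $\bE_{\bQ}[\int_0^T\abs{u_s}^2\ds]<\infty$ and $H(\bQ\,|\,\bP)=\half\bE_{\bQ}[\int_0^T\abs{u_s}^2\ds]$ (a ``converse Girsanov'' statement), \emph{and} $u$ can be taken progressively measurable with respect to the filtration generated by the innovation $B^{\bQ}$ --- equivalently, $B^{\bQ}$ and $W$ generate the same filtration --- so that $u$ becomes a genuine element of $\cA$ after the transfer. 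The second delicate ingredient is the limiting procedure in the bound ``$\le$'': reducing a general $v\in\cA$ to bounded, then ``simple'', controls, one must exploit $\norminf{F}<\infty$ and the mutual absolute continuity of the shifted laws (rather than any continuity of the merely Borel map $F$) together with Fatou-type estimates for the entropy term. The original argument of~\cite{boue98} avoids the structural statements by establishing~\eqref{eq:boue} first for simple controls, where every quantity reduces to a finite-dimensional Gaussian computation, and only then passing to the limit; the two routes are morally equivalent.
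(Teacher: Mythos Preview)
The paper does not prove this lemma at all: it is quoted in the introduction as~\cite[Theorem~3.1]{boue98} and used as a black box throughout. Your proposal is a faithful sketch of the original Bou\'e--Dupuis argument (Gibbs variational formula plus Girsanov/martingale representation, with the known delicate points correctly flagged), so there is nothing in the paper to compare it against beyond the citation itself.
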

The representation~\eqref{eq:boue} contains in a single formula the usual tools used in the proof of an LDP. The first term on the right-hand side comes from the relative entropy between the Wiener measure and the measure shifted by $\int_0^\cdot v_s\ds$ via Girsanov's theorem, under which $W+\int_0^\cdot v_s\ds$ is a Brownian motion. It can be interpreted as the cost of deviating from the original path and clearly indicates where the form of the rate function comes from. In essence, this representation replaces the non-linear analysis of the Freidlin-Wentzell approach with the linear theory of weak convergence. Instead of exponential estimates, only qualitative properties of the shifted process need to be established, such as strong existence and uniqueness and tightness.

The extensive literature on the topic, summarised in~\cite{BD19} and the references therein, demonstrates the strength of this generic approach which can be applied to a variety of models without appealing to their particular features. It has been used to derive LDPs, in the continuous-time case, for diffusions~\cite{Chiarini14}, multiscale systems~\cite{BDG18, DS12,Kostas13}, SVEs with singular kernels and Lipschitz continuous coefficients~\cite{Zhang08}, SDEs driven by infinite-dimensional Brownian motions~\cite{BDM08}, by Poisson random measures~\cite{BCD13} or both~\cite{BDM11}, including stochastic PDEs. 
Contrary to the Freidlin-Wentzell approach, this method has also proved efficient to obtain MDPs. 
SVEs with Lipschitz continuous kernels~\cite{Li17}, SDEs with jumps~\cite{BDG16} and slow-fast systems~\cite{Morse17} are a few relevant examples. 
The latter were then tailored to the setting of stochastic volatility models in~\cite{JK20}, 
developing the first application of the weak convergence approach in mathematical finance,
and extending the MDP results in~\cite{FGeP18} to a pathwise setting.  
A further appealing feature of moderate deviations is the simple form, often quadratic, of the rate function,
as opposed to that provided by large deviations, 
thereby opening the gates to the use of importance sampling and variance reduction techniques~\cite{DSW12,Rob10,Morse18}.

Building on this powerful approach, we provide a unified treatment of (finite-dimensional and pathwise) large and moderate deviations in the general framework~\eqref{eq:SVEintro} 
by showing the weak convergence of a perturbed system.
We relax the uniqueness requirement for the limiting Volterra equation, 
as in~\cite{CDMD15,Donati04} for the diffusion case, allowing us to consider coefficients that are not Lipschitz continuous and do not necessarily have sublinear growth.

The paper is organised in the following way:
Section~\ref{section:framework} introduces the framework and useful definitions. 
In section~\ref{section:main}, we present abstract criteria for the validity of an LDP, 
extending the results by Budhiraja and Dupuis~\cite[Theorem 4.1]{BD01}. 
Our main results, Theorem~\ref{thm:LDP} for LDP and Theorem~\ref{thm:MDP} for MDP, are then stated in the case of convolution kernels and extended to non-convolution kernels in Theorem~\ref{thm:nonconv}. 
In Section~\ref{section:rvol}, we show how these results apply to rough volatility models, 
and give precise formulae for 
the rough Stein-Stein, the (multi-factor) rough Bergomi and the rough Heston models. 
We finally gather technical proofs in the appendix.

%%%%%%%%%%%%%%%%%%%%%%%%%%%%%%%%%%%%%%%%%%%%%%%%%%%%%%%%%%%

\section{General framework}
\label{section:framework}
\subsection{Notations}
We consider a fixed time horizon $T>0$, and denote $\bT:=[0,T]$, $\bR_+ := [0,+\infty)$ and~$\overline{\bR}_+ := [0,+\infty]$.
For $d_1\ge1$, $d_2\ge1$, $\abs{\cdot}$ denotes the Euclidean norm in~$\bR^{d_1}$ and the Frobenius norm in~$\bR^{d_1\times d_2}$, and~$\llbracket d_1,d_2\rrbracket:= \{d_1,\cdots,d_2\}$.
For $p\ge1$,~$L^p$ stands short for~$L^p(\bT)$, and $\norm{\cdot}_2$ is the usual~$L^2$ norm.
Furthermore, for~$d\ge1$, $\cW^d := \cC(\bT:\bR^d)$ represents the space of continuous functions from~$\bT$ to~$\bR^d$, equipped with the supremum norm $\norm{\vphi}_\bT := \sup_{t\in\bT} \abs{\vphi_t}$ for any $\vphi\in\cW^d$. 
Finally, for any $d_1\ge1$, $d_2\ge1$, $M>0$,  $f:\bR^{d_1}\to\bR^{d_2}$,
we write $\norm{f}_M := \sup\{ \abs{f(x)}: \abs{x}\le M\}$.
Unless stated otherwise, constants will be denoted by~$C$ (with possible subscript) and may be different from one proof to another. Every statement involving~$\ep$ stands for all $\ep>0$ small enough.
A family of random variables will be called tight if the corresponding measures are tight~\cite[Appendix A]{DE97}.
We also use the classical convention that the infimum over an empty set is equal to infinity.
Finally, we recall the following definitions for clarity and notations:
\begin{definition}\label{def:modulusG}
Let $g$ be a function from~$\bR^d$ to~$\bR^n$.
\begin{itemize}
\item It has linear growth if there exists~$C_L>0$ such that 
$\abs{g(x)} \le C_L (1+\abs{x})$, for all~$x\in\bR^d$;
\item if it is uniformly continuous, it admits a continuous and increasing modulus of continuity $\rho_g:\bR_+\to\bR_+$, with $\rho_g(0) =0$ and
$\abs{g(x)-g(y)} \le \rho_g(\abs{x-y})$, for all $x,y\in\bR^d$;
\item it is locally~$\delta$-H\"older continuous with $\delta\in (0,1)$ if, 
for all~$M>0$, there exists~$C_M>0$ such that 
$\abs{g(x)-g(y)} \le C_M \abs{x-y}^\delta$, for all~$\abs{x}\vee\abs{y}\le M$.
\end{itemize}
\end{definition}

\subsection{Framework}
We consider small-noise convolution stochastic Volterra equations (SVE)
\begin{equation}
    X_t^\ep = X^\ep_0 + \int_0^t K(t-s)  b_\ep(s,X_s^\ep)\D s + \vartheta_\ep  \int_0^t K(t-s)  \sigma_\ep(s,X^\ep_s)\D W_s, \quad t\in\bT,
    \label{eq:Xep}
\end{equation}
taking values in~$\bR^d$ with $d\geq 1$, where $\ep>0$,
and $\vartheta_\ep >0$ tends to zero as~$\ep$ goes to zero.
For each $\ep>0$, $X_0^\ep\in\bR^d$, $b_{\ep}:\bT\times\bR^d\to\bR^d$ and $\sigma_{\ep}:\bT\times\bR^d\to\bR^{d\times m}$ are Borel-measurable functions, 
and~$W$ is an~$m$-dimensional Brownian motion on the filtered probability space~$(\Omega,\cF,\{\cF_t\}_{t\in\bT},\bP)$ satisfying the usual conditions. 
The kernel function~$K:\bT\to\bR^d\times\bR^d$, of convolution type, is allowed to be singular, 
thus encompassing fractional processes, in particular the recent literature on rough volatility~\cite{Alos07,BFG16,EFR18,GJR14}.
Components of the system are in general correlated, 
the correlation matrix being implicitly encoded in the diffusion coefficient~$\sigma_\ep$.
General existence and uniqueness results for such stochastic Volterra equations are so far out of reach, 
and our conditions below are sufficient and general enough for most applications.
In order to state them precisely, we first introduce several definitions and concepts:

\begin{definition}
For any $\ep>0$, a solution to~\eqref{eq:Xep} is an~$\bR^d$-valued progressively measurable stochastic process~$X^\ep$ satisfying~\eqref{eq:Xep} almost surely and such that
\begin{equation}
    \bP \left( \int_0^t \Big\{\abs{ K(t-s) b_\ep(s,X^\ep_s)} + \abs{ K(t-s) \sigma_\ep(s,X_s^\ep)}^2 \Big\} \D s <\infty, \text{  for all  }t\in \bT
\right) =1.
\label{eq:condsol}
\end{equation}
%We call it exact if it is pathwise unique.
\end{definition}
We shall assume that the (singular) convolution kernel satisfies the following condition, 
which is essentially a multivariate version of the  one given in~\cite[Condition (2.5)]{ALP17}:
\begin{assumption}
\label{assu:K}
The kernel~$K:\bT\to \bR^{d\times d}$ is an upper triangular matrix satisfying the following conditions:
$K\in L^2(\bT:\bR^{d\times d})$ and
there exists~$\gamma\in(0,2]$ such that, for~$h$ small enough,
\[
\int_0^h \abs{K(t)}^2\dt + \int_0^T \abs{K(t+h)-K(t)}^2 \dt=  \cO(h^\gamma).
\]
\end{assumption}
We refer to~\cite[Example 2.3]{ALP17} for a broad range of kernels that satisfy this assumption. 
Of particular interest in mathematical finance is the Riemann-Liouville kernel~$K(t)=t^{H-\half}$, for $H\in(0,\half)$ implying~$\gamma=2H$. 
Moreover if~$\widetilde K$ is locally Lipschitz and~$K$ satisfies Assumption~\ref{assu:K} 
then so does the product~$K \widetilde K$; this includes the gamma and power-law kernels which are related to the class of Brownian semistationary processes~\cite{BNS09}.
\begin{remark}
This setup covers in particular the following two useful forms for the kernel:
\begin{itemize}
    \item  $K = \mathrm{Diag}(k_1, \cdots, k_d)$ is a diagonal matrix, 
    where each~$k_i:\bT\to\bR$ satisfies Assumption~\ref{assu:K}.
    \item The drift and diffusion coefficients of any sub-system of~\eqref{eq:Xep} can be convoluted with different kernels.
    As an example, the one-dimensional SVE
   \[
    Y_t^{\ep} = Y_0^{\ep} + \int_0^t K_1(t-s) b_\ep^Y\left(s,Y_s^{\ep} \right)\ds + \int_0^t K_2(t-s) \sigma_\ep^Y\left(s,Y_s^{\ep} \right)\D W_s^{(1)}
   \]
   is the first component of~\eqref{eq:Xep} with $d=2$ and
\[
K = \begin{pmatrix} K_1 & K_2 \\ 0 &0 \end{pmatrix}, \quad 
b_\ep = \begin{pmatrix} b_\ep^Y \\ 0 \end{pmatrix}, \quad 
\sigma_\ep = \begin{pmatrix} 0&0\\  \sigma_\ep^Y & 0 \end{pmatrix}.
\]
\end{itemize}
One could in fact use general matrices to eliminate the auxiliary state, such as~$K = (K_1 \; K_2):\bT\to\bR^{1,2}$ in the example above. We stick to square matrices for consistency.
\end{remark}

Volterra systems appearing in the literature, and in particular in the mathematical finance one,
have a specific structure in the sense that only one component satisfies an SVE with (singular) kernel,
and can be dealt with independently of the other component.
This particular structure allows us to relax some conditions on the coefficients,
and we shall leverage on it whenever needed.
We make this more specific through the following two definitions:

\begin{definition}\label{def:autonFunction}
Let $\Upsilon\subset \llbracket 1,d \rrbracket$ and $\Gamma:\bR^{\abs{\Upsilon}}\to [0,\infty)$.
We define~$\GUG$ as the set of functions~$f$ for which
there exists a strictly positive constant~$C_{\Upsilon}$ such that, for all~$x\in\bR^d$, 
\begin{equation}
    \abs{f(x)} \le C_{\Upsilon} \Big(1 + \abs{x}_{\Upsilon^c} + \Gamma\big( x^{(\Upsilon)}\big) \Big) ,
\label{eq:UpsilonGrowth}
\end{equation}
where~$\abs{x}_{\Upsilon^c} := \sum_{i\in \Upsilon^c} \abs{x^{(i)}}$ 
and $x^{(\Upsilon)} := (x^{(i)})_{i\in\Upsilon}$.
\end{definition}

\begin{definition}\label{def:autonomous}
The process~$X^\ep$ admits an autonomous $\GUG$-subsystem 
$\{ X^{\ep,(l)}\}_{l\in \Upsilon}$ 
if for all $1\le i, j\le d$ such that $K_{ij} \neq 0$, $b_\ep^{(j)}(t,\cdot)$ and all the components of the row~$\sigma_\ep^{(j)}(t,\cdot)$ satisfy the following for small enough~$\ep$ and uniformly in~$t\in\bT$:
\begin{itemize}
    \item if $i\in \Upsilon$, they have linear growth and do not depend on~$X^{\ep, (k)}$ for $k\in\Upsilon^c$;
    \item if $i\in \Upsilon^c$, they belong to~$\GUG$.
\end{itemize}
\end{definition}

\begin{example}\label{ex:Bergomi}
The motivation for Definition~\ref{def:autonomous} is to be able to handle (rough) stochastic volatility models, 
ubiquitous in mathematical finance, where linear growth of all the coefficients may not hold.
Consider for example the rough Bergomi model~\cite{BFG16}
\begin{align*}
\left\{
\begin{array}{rl}
    X_t^{(1)} &= \displaystyle -\half \int_0^t \exp\left(X^{(2)}_s\right) \ds
     + \int_0^t \exp\left(\half X^{(2)}_s\right) \D B_s, \\
    X^{(2)}_t &= \displaystyle y_0  - at^{2H} + \int_0^t (t-s)^{H-\half} \D W_s,
\end{array}
\right.
\end{align*}
where $B$ and $W$ are Brownian motions with correlation~$\rho\in(-1,1)$.
After dropping the dependence in~$\ep$, this fits into the setup of~\eqref{eq:Xep} with~$d=3$, $y_0\in\bR$, $a>0$, $H\in(0,\half), \rrho=\sqrt{1-\rho^2}$ and
\[
K(t) = \begin{pmatrix} 1 & 0 & 0 \\ 0 & t^{2H-1} & t^{H-\half} \\ 0&0&0 \end{pmatrix},
\quad b\big(t,(x_1,x_2)\big) = \begin{pmatrix} - \E^{x_2}/2 \\ -a/(2H) \\ 0
\end{pmatrix},
\quad \sigma\big(t,(x_1,x_2)\big) = \begin{pmatrix} \rrho \, \E^{ x_2/2 } & \rho \, \E^{ x_2/2 } & 0 \\ 0&0&0 \\ 1 &0&0 
\end{pmatrix},
\]
where the third component is meaningless but allows us to handle the two different kernels.
Here~$X$ admits~$X^{(2)}$ as autonomous subsystem with $\Upsilon=\{2\}$ and
$\Gamma(x_2)= 1+\E^{x_2}$.
\label{ex:autonomous}
\end{example}

%%%%%%%%
The following set of assumptions, inspired by~\cite{Chiarini14}, completes our framework:
\begin{enumerate}[{\bfseries H1.}]
    \item $X^\ep_0$ converges to $x_0\in\bR^d$ as $\ep$ tends to zero.
    \item  For all~$\ep>0$ small enough, the coefficients~$b_\ep$ and~$\sigma_\ep$ are measurable maps on $\bT\times\bR^d$
    and converge pointwise to~$b$ and~$\sigma$ as~$\ep$ goes to zero. Moreover, $b(t,\cdot)$ and~$\sigma(t,\cdot)$ are continuous on~$\bR^d$, uniformly in~$t\in\bT$. 
    \item Either {\bf a)} or {\bf b)} holds:
    \begin{enumerate}[{\bf a)}]
        \item For all~$\ep>0$ small enough, $b_\ep$ and~$\sigma_\ep$ have linear growth  uniformly in~$\ep$ and in~$t\in\bT$.
        \item The process~$X^\ep$ admits an autonomous $\GUG$-subsystem.
    \end{enumerate}
    \item The SVE~\eqref{eq:Xep} is pathwise unique for small enough~$\ep>0$.
\end{enumerate}
{\bf H2} ensures that, on compact subsets of~$\bT\times\bR^d$, the convergence of~$b_\ep$ and~$\sigma_\ep$ is uniform and that~$b$ and~$\sigma$ are uniformly continuous.
{\bf H1, H2, H3a} are standard and easily verifiable. {\bfseries H3b} is unusual but includes a large number of functions; Assumption~\ref{assu:Gammabound} will complete it to indicate the role of~$\Gamma$ such as to include Example~\ref{ex:autonomous}. Moreover, the growth conditions from \textbf{H3} are uniform in~$\ep$ and therefore apply to the limits~$b$ and~$\sigma$. The main restrictions arise from {\bf H4}, although the latter is satisfied if, for instance, the coefficients $b_\ep$ and $\sigma_\ep$ are locally Lipschitz continuous for small enough $\ep>0$. 
This condition was relaxed in~\cite{MS15} to the one-dimensional case where~$K(t)=t^{-\alpha}$,
for $\alpha\in(0,\half)$ and~$\sigma(x)=x^\gamma$, for $\gamma\in(\frac{1}{2(1-\alpha)},1]$, 
which is clearly not Lispchitz continuous. 
Furthermore, to the best of our knowledge there currently exists no pathwise LDP for stochastic equations where pathwise uniqueness fails.

One can compare our setup with the SVE considered by Zhang in~\cite{Zhang08}, where a pathwise LDP was derived under the assumptions of Lipschitz continuity and linear growth of the coefficients. We relax both these assumptions.
Indeed, our framework covers H\"older-continuous diffusion coefficients, as mentioned in the previous paragraph, and functions with non-linear growth through the concept of autonomous~$\GUG$-subsystem, as in Example~\ref{ex:Bergomi}.

%%%%%%%%%%%%%%%%%%%%%%%%%%%%%%%%%%%%%%%%%%%%%%%%%%%%%%%%%%%%%%%%%%%%%%%%%%
%%%%%%%%%%%%%%%%%%%%%%%%%%%%%%%%%%%%%%%
%%%%%%%%%%%%%%%%%%%%%%%%%%%%%%%%%%%%%%%%%%
\section{Large and moderate deviations}
\label{section:main}
As discussed in the introduction, our goal is to provide pathwise large and moderate deviations 
for the general convolution stochastic Volterra system~\eqref{eq:Xep}, and then extend these to non-convolution kernels.
The classical Freidlin-Wentzell approach, used in~\cite{Freidlin84}, has limitations regarding the behaviour of the coefficients, 
and the rate function is often rather cumbersome to write.
We follow here instead the weak convergence approach developed by Dupuis and Ellis~\cite{DE97}.
We first introduce the reader to their abstract setting, 
and refine the large deviations result by Budhiraja and Dupuis~\cite{BD01} to our general setup.
We then show how this abstract framework applies to the small-noise stochastic Volterra system~\eqref{eq:Xep}, first proving pathwise large deviations, and then the moderate deviations counterpart.

%%%%%%%%%%%%%%%%%%%%%%%%%%%%%%%%%%%%%%%%%%%%
\subsection{Weak convergence approach: the abstract setting}\label{sec:WeakAbstract}
Given a family of Borel-measurable functions~$\{\cG^\ep\}_{\ep>0}$ from~$\cW^m$ to~$\cW^d$,
we enquire about the large deviations behaviour of the family of random variables
$ \{\cG^\ep(W)\}_{\ep>0}$ as~$\ep$ tends to zero, 
where~$W$ is a standard Brownian motion on the filtered probability space above.
For each $N>0$, the spaces of bounded deterministic and stochastic controls
\begin{align}
    &\cS_N:=\Big\{v\in L^2: \int_0^T |v_s|^2\ds \leq N \Big\} 
    \qquad\text{and}\qquad
    \cA_N:= \Big\{ v\in\cA: v\in \cS_N  \mbox{ almost surely}\Big\},
\label{eq:defAN}
\end{align}
with~$\cA$ introduced in~\eqref{eq:cA},
are equipped with the weak topology on~$L^2(\bT\times\Omega)$ such that they are closed and even compact (by Banach-Alaoglu-Bourbaki theorem).
Budhiraja and Dupuis~\cite{BD01} assume,
for any sequence~$\{v^\ep\}_{\ep>0}$ in~$\cA_N$ converging weakly to~$v\in\cA_N$, the existence of a limit in distribution of $\cG^\ep\left(W + \frac{1}{\ep} \int_0^\cdot v^\ep_s\ds \right)$ which is uniquely characterised by~$v$.
However, such uniqueness may fail when the coefficients of the system~\eqref{eq:SVEintro}
 (in particular the diffusion coefficient~$\sigma$) are not locally Lipschitz, 
 as is the case for the Feller diffusion for example
 (in this case without singular kernel, a dedicated analysis was carried out in~\cite{CDMD15,Donati04} using the Freidlin-Wentzell approach).
We relax here this uniqueness assumption by replacing the limiting trajectory by a perturbed version.

\begin{definition}\label{def:G0v}
For all~$N\in\bN$ and~$v\in\cA_N$ we define %$\cG^0_{v}$ as the limit set of $\cG^\ep(W + \ep^{-1}\int_0^\cdot v^\ep_s\ds)$ over all families $\{v^\ep\}_{\ep>0}$ in~$\cA$ converging in distribution to~$v$. That is
\begin{align*}
\cG^0_{v,N} := \bigg\{ \phi:\Omega\to\cW^d \;\Big\lvert 
\; & \text{there exist } \{\ep_n\}_{n\in\bN}\subset\bR_+ \text{ with } \lim_{n\uparrow\infty} \ep_n =0 \\
&\text{and a sequence } \{v^{\ep_n}\}_{n\in\bN}\subset\cA_N \text{  with } \lim_{n\uparrow\infty} v^{\ep_n}= v \text{ in distribution such that }\\
& \phi=\lim_{n\uparrow\infty} \cG^{\ep_n}\left(W + \ep_n^{-1}\int_0^\cdot v^{\ep_n}_s\ds \right) \text{ in distribution} \bigg\},
\end{align*}
and~$\cG^0_{v,N}$ is empty if~$v$ is not in~$\cA_N$.
For all~$v\in\cA$, we also denote~$\displaystyle\cG_v^0:= \bigcup_{N\in\bN} \cG_{v,N}^0$.
\end{definition}
%Note that if~$v$ is deterministic then $\cG_v^0$ should be as well.
Then we define the functional~$I:\cW^d\to\overline{\bR}_+$ given by
 \begin{equation}
        I(\phi):=\inf\bigg\{\half \int_0^T \abs{v_s}^2\ds: v\in  L^2 \text{ such that } \phi\in\cG^0_{v} \bigg \}.
    \label{eq:raterelax}
\end{equation}
\begin{definition}\label{def:UC}
We say that~$\phi\in\cW^d$ is {\em uniquely characterised} if there exists a sequence $\{v^n\}_{n\in\bN}\subset L^2$ such that
\begin{equation}
\cG^0_{v^n}=\{\phi\}
\qquad \text{and} \qquad 
\half \int_0^T \abs{v_s^n}^2\ds \le I(\phi) +\frac{1}{n},\quad \text{for all } n\in\bN.
\label{eq:condUB}
\end{equation}
In particular, if there exists $\widetilde v\in L^2$ which attains the infimum in~\eqref{eq:raterelax} and $\cG^0_{\widetilde v}=\{\phi\}$ then $\phi$ is uniquely characterised, because one can choose $v^n=\widetilde v$ for all~$n\in\bN$.
\end{definition}
In the display~\eqref{eq:raterelax} and  Definition~\ref{def:UC}, $\phi$, $v$, $v^n$ and~$\widetilde v$ are all deterministic.

\begin{assumption}
\label{assu:relax}
For any~$\delta>0$ and any $\phi\in\cW^d$ such that~$I(\phi)<+\infty$, there exists~$\phi^\delta$ uniquely characterised such that
$\normbT{\phi-\phi^\delta}\le \delta$ and $\abs{I(\phi)-I(\phi^\delta)}\le \delta$.
\end{assumption}
\begin{remark}
This assumption is reminiscent of~\cite[Proposition 3.3]{Donati04}, where the authors resolve the non-uniqueness issue in the diffusion case. A similar problem is also at the core of~\cite[Lemma 5.1]{BFW20} in an infinite-dimensional setting.
\end{remark}

Our abstract large deviations result is the following, extending~\cite[Theorem 4.4]{BD01},
at least when the underlying Hilbert space is~$L^2(\bT:\bR^m)$, to the non-uniqueness case.
\begin{theorem}
\label{thm:relax}
Assume that
\begin{enumerate}[(i)] 
    \item For all~$N>0$, all~$v\in\cA_N$ and all families~$\{v^\ep\}_{\ep>0}$ in~$\cA_N$ converging in distribution to~$v$ as~$\ep$ tends to zero, $\big\{\cG^\ep\left(W + \ep^{-1}\int_0^\cdot v^\ep_s\ds \right)\big\}_{\ep>0}$ is tight.
    \item The functional~$I$ defined by~\eqref{eq:raterelax} has compact level sets.
    \item Assumption~\ref{assu:relax} holds.
\end{enumerate}
Then the family~$\{\cG^\ep(W)\}_{\ep>0}$ satisfies the Laplace principle and, by equivalence, the Large Deviations Principle with rate function~$I$ and speed~$\ep^{-2}$.
\end{theorem}
\begin{remark}
Item (i) entails that, for all~$N>0$, all~$v\in\cA_N$ and all families~$\{v^\ep\}_{\ep>0}$ in~$\cA_N$ converging in distribution to~$v$, there exists a subsequence such that $\lim_n \cG^{\ep_n}\left(W + \ep_n^{-1}\int_0^\cdot v^{\ep_n}_s\ds \right)$ exists and by definition belongs to $\cG^0_{v,N}$. %The proof of this theorem does not change if one notices that every subsequence is also tight and therefore has a converging subsequence.
\end{remark}
\begin{remark}
In the large deviations literature, a rate function is sometimes called `good' if it has compact level sets. All the rate functions in the present paper satisfy this requirement (item (ii) above takes care of that) therefore we drop the adjective `good'. 
\end{remark}
We defer the proof to Appendix~\ref{app:proof:thm:relax}; 
the lower bound can be tackled as in~\cite{BD01}, and we therefore concentrate 
on the upper bound.
The idea is that the Laplace principle~\eqref{eq:Laplace} upper bound involves an infimum so deriving it only requires a $\delta$-optimal path. Hence a perturbation will also do the trick, provided one knows how to handle the control associated to it.
In~\cite[Theorem 4.4]{BD01}, unique characterisation of the limiting element in~(i) is granted, 
and the set~$\cG^0_v$ is a singleton that takes the form $\cG^0(\int_{0}^\cdot v_s \ds)$,
where they view~$\cG^0$ as a map.
In that case Assumption~\ref{assu:relax} is clearly satisfied since~$\phi^\delta$ can be taken as~$\phi$ itself.

%%%%%%%%%%%%%%%%%%%%%%%%%%%%%%%%%%%%%%%5
\subsection{Application to stochastic Volterra systems}\label{sec:Application}
We now show how the abstract setting developed above in Section~\ref{sec:WeakAbstract} 
applies to the small-noise stochastic Volterra system~\eqref{eq:Xep} and why pathwise uniqueness is so fundamental.
If {\bf H4} holds, define the functional~$\cG^\ep$ as the Borel-measurable map associating the multidimensional Brownian motion~$W$ 
to the solution of the stochastic Volterra equation~\eqref{eq:Xep}, that is: $\cG^\ep(W) = X^\ep$. 
For any control~$v\in\cA_N,\,N>0$ (introduced in~\eqref{eq:defAN}) and any~$\ep>0$,
the process
$\widetilde{W} := W + \vartheta_\ep^{-1}\int_0^\cdot v_s \ds$
is a $\widetilde\bP$-Brownian motion by Girsanov's theorem, where
\[
\frac{\D \widetilde\bP}{\D\bP} := \exp\left\{-\frac{1}{\vartheta_\ep } \sum_{i=1}^m \int_0^T v_s^{(i)} \D W_s^{(i)}
-\frac{1}{2\vartheta_\ep ^2}\int_0^T \abs{v_s}^2\ds\right\}.
\]
Hence the shifted version~$X^{\ep,v}:=\cG^\ep(\widetilde W)$ appearing in Theorem~\ref{thm:relax}(i) is the strong unique solution of~\eqref{eq:Xep} under~$\widetilde\bP$, with~$X^\ep$
and~$W$ replaced by~$X^{\ep,v}$ and~$\widetilde W$. Because~$\bP$ and~$\widetilde\bP$ are equivalent, $X^{\ep,v}$ is also the unique strong solution, under~$\bP$, of the controlled equation
\begin{equation}    \label{eq:Xepv}
    X_t^{\ep,v} = X^\ep_0 + \int_0^t K(t-s)  \Big[ b_\ep(s,X_s^{\ep,v}) + \sigma_\ep(s,X^
{\ep,v}_s)  v_s \Big]\D s
    + \vartheta_\ep  \int_0^t K(t-s)  \sigma_\ep(s,X^{\ep,v}_s)\D W_s.
\end{equation}
Under appropriate conditions, and using the notations set in~{\bf H1, H2}, 
we heuristically observe that taking~$\ep$ to zero,  
the system~\eqref{eq:Xepv} reduces to the deterministic Volterra equation
\begin{equation}
    \phi_t = x_0 + \int_0^t K(t-s) \Big[ b(s,\phi_s) + \sigma(s,\phi_s)  v_s \Big]\D s.
    \label{eq:LDPlimit}
\end{equation}
We will show later that the set~$\cG^0_v$ corresponds to the set of solutions of~\eqref{eq:LDPlimit}.

\begin{example}
\label{ex:relax}
To illustrate the need for a set~$\cG^0_v$ rather than a singleton, 
consider the Feller diffusion 
$$
X_t = x_0 + \kappa\int_{0}^{t}(\theta-X_s)\ds + \int_{0}^{t}\sqrt{X_s}\D W_s,
$$
for $t\in\bT$, with $x_0, \kappa, \theta>0$.
Letting $t\mapsto \ep t$ and denoting $X_t^\ep:=X_{\ep t}$ yields, by scaling,
$$
X^\ep_t = x_0 + \kappa\ep\int_{0}^{t}\left(\theta-X^\ep_s\right)\ds + \sqrt{\ep}\int_{0}^{t}\sqrt{X^\ep_s}\D W_s,
$$
which is exactly~\eqref{eq:Xep}
with $d=1$, $K\equiv1$ , $\vartheta_\ep=\sqrt{\ep}$ ,  $b_\ep(x) = \kappa\ep(\theta-x)$, $\sigma_\ep(x)=\sqrt{x}$.
For $v\in L^2$, taking limits as~$\ep$ tends to zero in the corresponding controlled equation~\eqref{eq:Xepv}
yields~\eqref{eq:LDPlimit}, or 
$$
\phi_t = x_0 + \int_0^t \sqrt{\phi_s} v_s\ds,\quad t\in\bT.
$$
Uniqueness of this Volterra equation does not hold in general because of the non-Lipschitz coefficient, 
and thus $\cG^0_v$ corresponds to the set of non-negative solutions.
Consider for example $x_0=1$, $\bT = [0,4]$ and the control
$$
v_t :=
\begin{cases} 
      -1, &\quad \text{if }   t\in[0,2) \\
       1, &\quad \text{if }   t\in[2,4].
   \end{cases}
$$
The function $\phi_t:= \frac{(t-2)^2}{4}$ is clearly a solution, 
but so is~$\vphi$ equal to~$\phi$ on~$[0,2]$ and null on $[2,4]$.
The square root function is indeed locally Lipschitz away from zero, 
and uniqueness can thus be guaranteed as long as the solution remains positive. 
The perturbation $\phi^\delta:=\phi+\delta t$ is now the unique solution to
\[
\phi^\delta_t = 1 + \int_0^t  \sqrt{\phi_s^\delta} v_s^\delta\ds,
\]
for all $t\in[0,4]$, where $v^\delta:= \dot\phi^\delta / \sqrt{\phi^\delta}$.
The infimum in~\eqref{eq:raterelax} is attained by $v^\delta$ and $\cG^0_{v^\delta}=\{\phi^\delta\}$, thus $\phi^\delta$ is uniquely characterised. Furthermore, \cite[Proposition 3.3]{Donati04} shows that $\phi^\delta$ satisfies Assumption~\ref{assu:relax}.
\end{example}
In~\cite{BBDW19}, the authors were also confronted to a limiting equation with multiple solutions.
Instead of perturbing the path~$\phi$, they perturb the control in a way that the resulting equation has a unique solution which is precisely~$\phi$, i.e.~$\cG^0_{v^\delta} = \{\phi\}$.
This approach may seem more natural; however, it is
not always obvious how to perturb the control ensuring uniqueness of the ODE, while our formulation
makes it more straightforward.
Before stating the main large and moderate deviations results for small-noise stochastic Volterra equations, 
we introduce the following assumption, monitoring the moments of the controlled equation:
\begin{assumption}
\label{assu:Gammabound}
Let~$X^{\ep,v}$ be the pathwise unique solution to~\eqref{eq:Xepv}. If {\bf H3a} holds then the present assumption is satisfied. 
If instead {\bf H3b} holds, then there exists $\ep_0>0$ such that, for any~$p\ge1$ and $N>0$,
    \begin{equation}
    \sup \Big\{ \bE \left[\big\lvert \Gamma\big( (X^{\ep,v}_t)^{(\Upsilon)} \big)\big\lvert^p \right] :  t\in\bT,  v\in\cA_N, \ep\in(0,\ep_0) \Big\} < \infty,
\label{eq:boundgsto}
\end{equation}
\begin{equation}
    \sup \Big\{ \big\lvert \Gamma\big( (\phi_t)^{(\Upsilon)} \big)\big\lvert : t\in\bT, v\in \cS_N, \phi\in\cG^0_v \Big\}< \infty.
\label{eq:boundgdet}
\end{equation}
\end{assumption}
\begin{remark}
In the following, {\bf H3b} will always be complemented by Assumption~\ref{assu:Gammabound}.
\end{remark}

\subsection{Large Deviations}
Armed with the abstract setting in Section~\ref{sec:WeakAbstract}, 
and its application to the stochastic Volterra system~\eqref{eq:Xep} in Section~\ref{sec:Application},
we can at last show large deviations for the latter:
\begin{theorem}[Large Deviations]
Under {\bf H1 - H4}, Assumptions \ref{assu:K}, \ref{assu:relax} and~\ref{assu:Gammabound}, the family~$\{X^\ep\}_{\ep>0}$, unique solution of~\eqref{eq:Xep}, satisfies a Large Deviations Principle with rate function~\eqref{eq:raterelax} and speed~$\vartheta_\ep^{-2}$, where~$\cG^0_v$ is the set of solutions of the limiting equation~\eqref{eq:LDPlimit}.
\label{thm:LDP}
\end{theorem}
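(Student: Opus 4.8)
The plan is to verify the three hypotheses of the abstract Theorem~\ref{thm:relax} with $\cG^\ep$ taken to be the solution map $W\mapsto X^\ep$ of~\eqref{eq:Xep} (well-defined by \textbf{H4}), speed $\vartheta_\ep^{-2}$, and $\cG^0_v$ the set of solutions of the deterministic Volterra equation~\eqref{eq:LDPlimit}. Item~(iii) is Assumption~\ref{assu:relax}, which is imposed directly. So the two real tasks are: (a) item~(i), the weak-convergence/tightness statement --- along any sequence $\ep\to0$, if $v^\ep\in\cA_N$ converges in distribution to $v\in\cA_N$ (for the weak $L^2$ topology), then a subsequence of $X^{\ep,v^\ep}:=\cG^\ep(W+\vartheta_\ep^{-1}\int_0^\cdot v^\ep_s\ds)$, which by Girsanov solves the controlled SVE~\eqref{eq:Xepv} with control $v^\ep$, converges in distribution in $\cW^d$ to some $\phi\in\cG^0_v$; and (b) item~(ii), compactness of the level sets of $I$ in~\eqref{eq:raterelax}.

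For~(a) I would proceed in the standard weak-convergence steps. First, a priori moment bounds: using \textbf{H3} (linear growth uniformly in $\ep,t$, or \textbf{H3b} together with Assumption~\ref{assu:Gammabound} to control the $\Gamma$-terms on the autonomous subsystem), Assumption~\ref{assu:K} on the kernel, the Cauchy--Schwarz bound $\int_0^t|K(t-s)\sigma_\ep v^\ep_s|\ds\le \|K\|_2 \|\sigma_\ep(\cdot,X^{\ep,v^\ep})\|_{L^2}\sqrt N$, Burkholder--Davis--Gundy for the stochastic integral (whose contribution carries the vanishing prefactor $\vartheta_\ep$), and a Volterra--Gr\"onwall inequality, establish $\sup_\ep\bE[\|X^{\ep,v^\ep}\|_\bT^p]<\infty$ for all $p\ge1$; for \textbf{H3b} one bounds the $\Upsilon^c$-components in terms of the autonomous $\Upsilon$-components, which are themselves controlled by the linear-growth bound applied to the subsystem. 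Second, tightness in $\cW^d$: the increment estimate $\bE|X^{\ep,v^\ep}_t-X^{\ep,v^\ep}_{t'}|^p\le C|t-t'|^{\gamma p/2}$ follows from Assumption~\ref{assu:K}'s modulus-of-continuity control on $K$ (the $\int_0^h|K|^2+\int_0^T|K(\cdot+h)-K(\cdot)|^2=\cO(h^\gamma)$ bound), splitting the difference of the two Volterra integrals into a ``$K(\cdot+h)-K(\cdot)$'' part and a ``$\int_0^h K$'' part, combined with the moment bounds; Kolmogorov's criterion then gives tightness, hence $C$-tightness. Third, identification of the limit: along a convergent subsequence, by Skorokhod representation pass to a.s. convergence $X^{\ep,v^\ep}\to\phi$ in $\cW^d$ and $v^\ep\rightharpoonup v$ weakly in $L^2$; the drift term $\int_0^t K(t-s)b(s,\phi_s)\ds$ passes to the limit by uniform continuity of $b$ on compacts (\textbf{H2}) and dominated convergence; the key term $\int_0^t K(t-s)\sigma_\ep(s,X^{\ep,v^\ep}_s)v^\ep_s\ds$ converges to $\int_0^t K(t-s)\sigma(s,\phi_s)v_s\ds$ because $s\mapsto K(t-s)\sigma_\ep(s,X^{\ep,v^\ep}_s)$ converges strongly in $L^2$ (uniform convergence $\sigma_\ep\to\sigma$ on compacts by \textbf{H2} plus continuity of $\sigma(s,\cdot)$ applied along $X^{\ep,v^\ep}\to\phi$) while $v^\ep$ converges only weakly --- the product of a strongly and a weakly convergent sequence converges; and the stochastic integral vanishes since $\vartheta_\ep\to0$ and its $L^2$-norm is bounded. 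Hence $\phi$ solves~\eqref{eq:LDPlimit}, i.e. $\phi\in\cG^0_v$.

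For~(b), compactness of the level sets $\{I\le N\}$: any $\phi$ with $I(\phi)\le N$ admits (by the definition~\eqref{eq:raterelax} and the convention on infima) a control $v\in\cS_{2N+1}$ with $\phi\in\cG^0_v$. Given a sequence $\phi^n$ in the level set with associated controls $v^n\in\cS_{2N+1}$, use weak compactness of $\cS_{2N+1}$ in $L^2$ (Banach--Alaoglu, as noted before~\eqref{eq:defAN}) to extract $v^n\rightharpoonup v\in\cS_{2N+1}$, run the same deterministic passage-to-the-limit argument as in step~(a) (now purely deterministic, no Brownian term, using the a priori Volterra--Gr\"onwall bound to get equicontinuity of $\phi^n$ via Assumption~\ref{assu:K} and Arzel\`a--Ascoli) to conclude $\phi^n\to\phi$ uniformly with $\phi\in\cG^0_v$, and lower-semicontinuity of the $L^2$-norm under weak convergence gives $\tfrac12\|v\|_2^2\le\liminf\tfrac12\|v^n\|_2^2$, hence $I(\phi)\le N$; this shows the level set is sequentially compact, hence compact in $\cW^d$.

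With (i)--(iii) established, Theorem~\ref{thm:relax} applied with speed $\vartheta_\ep^{-2}$ yields the Laplace principle and, by equivalence, the LDP for $\{X^\ep\}_{\ep>0}=\{\cG^\ep(W)\}_{\ep>0}$ with rate function~\eqref{eq:raterelax}. The main obstacle I expect is the a priori moment bound and tightness under \textbf{H3b}: controlling the possibly super-linearly growing coefficients through the autonomous $\GUG$-subsystem requires carefully decoupling the $\Upsilon$ and $\Upsilon^c$ blocks --- bounding the autonomous block first by its own linear growth, then feeding $\Gamma((X^{\ep,v})^{(\Upsilon)})$ (controlled uniformly by Assumption~\ref{assu:Gammabound}) into the growth estimate for the remaining components --- and doing so with constants uniform in $\ep$, in $v\in\cA_N$, and in $t\in\bT$, all while the singular kernel $K$ only lies in $L^2$ so that every application of Gr\"onwall must be the Volterra (convolution) version rather than the classical one.
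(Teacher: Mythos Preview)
Your proposal is correct and follows essentially the same route as the paper: verify the three hypotheses of Theorem~\ref{thm:relax} via uniform moment bounds (Lemma~\ref{lemma:boundv}, obtained through a Volterra--Gr\"onwall argument with the $\Upsilon/\Upsilon^c$ decoupling under \textbf{H3b} exactly as you describe), Kolmogorov tightness from Assumption~\ref{assu:K} (Lemma~\ref{lemma:tightLDP}), compactness of level sets via Arzel\`a--Ascoli and weak compactness of $\cS_N$ (Lemma~\ref{lemma:goodnessLDP}), and identification of the limit. The only noteworthy difference is in the identification step: rather than Skorokhod, the paper introduces the bounded continuous functional $\Phi_t(f,\omega):=\big|\omega_t-x_0-\int_0^t K(t-s)[b(s,\omega_s)+\sigma(s,\omega_s)f_s]\ds\big|\wedge1$ and shows $\bE[\Phi_t(v^n,X^n)]\to0$ directly on the original space, with an explicit localization on $\{\|X^n\|_\bT\le M\}$ to handle that $b_\ep,\sigma_\ep\to b,\sigma$ only on compacts; this sidesteps the bookkeeping of carrying the Brownian motion and the stochastic integral through the Skorokhod representation, which your sketch leaves implicit.
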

\begin{remark}
We recall that Assumption~\ref{assu:relax} is automatically satisfied if the limiting equation~\eqref{eq:LDPlimit} has a unique solution. Also, it is only necessary to check Assumption~\ref{assu:Gammabound} if {\bf H3a} does not hold.
\end{remark}

\subsubsection{Technical preliminary results}
The proof will rely on the following results: 
Lemma~\ref{lemma:boundv} (proved in Section~\ref{app:proof:lemma:boundv}) 
shows the moment bound of the controlled process defined by~\eqref{eq:Xepv}, 
Lemma~\ref{lemma:tightLDP} (proved in Section~\ref{app:proof:lemma:tightLDP}) demonstrates the tightness and Lemma~\ref{lemma:goodnessLDP} (proved in Section~\ref{app:proof:lemma:goodnessLDP}) 
deals with the compactness of the level sets of the rate function. They will then allow the use of Theorem~\ref{thm:relax}.

%%%%%%%%%%%%%%%%%%%%%%%%%%%%%%%%%%%%%%%%%%%%%%%%%%%%%%%%%%%%%%%%%%%%%%%%%%%%%%%%%%%

\begin{lemma}[LDP Moment bound]
Under {\bf H1 - H4}, Assumptions~\ref{assu:K} and~\ref{assu:Gammabound}, for all~$p\ge 2$,~$N>0$, $v\in\cA_N$ and~$\ep>0$ small enough, there exists a constant~$\overline{c}>0$
independent of~$\ep,v,t$ 
such that
\begin{equation}
    \sup_{ t\in\bT} \bE \left[ \abs{X^{\ep,v}_t}^p \right] \le \overline{c}.
\label{eq:boundv}
\end{equation}
\label{lemma:boundv}
\end{lemma}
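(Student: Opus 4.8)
The plan is to estimate $\bE[|X^{\ep,v}_t|^p]$ directly from the controlled equation~\eqref{eq:Xepv} by splitting into drift, control and stochastic terms, and to close the estimate with a Volterra-type Gr\"onwall argument. First I would fix $p\ge 2$, $N>0$, $v\in\cA_N$, and write, for $t\in\bT$,
\[
|X^{\ep,v}_t|^p \le C_p\Big( |X_0^\ep|^p + \Big|\int_0^t K(t-s)b_\ep(s,X^{\ep,v}_s)\D s\Big|^p + \Big|\int_0^t K(t-s)\sigma_\ep(s,X^{\ep,v}_s)v_s\D s\Big|^p + \vartheta_\ep^p\Big|\int_0^t K(t-s)\sigma_\ep(s,X^{\ep,v}_s)\D W_s\Big|^p\Big).
\]
For the drift term, H\"older's inequality with the pairing $(p/(p-1),p)$ against $|K(t-s)|$ gives a bound $\norm{K}_2^{p/(p-1)\cdot(p-1)}\cdots$ — more cleanly, since $K\in L^2\subset L^1$, one uses $\|K\|_{L^1}^{p-1}\int_0^t|K(t-s)|\,|b_\ep(s,X^{\ep,v}_s)|^p\D s$ and then the linear growth of $b_\ep$ (uniform in $\ep,t$, from {\bf H3a} or {\bf H3b}), which under {\bf H3b} introduces the extra term $\Gamma((X^{\ep,v}_s)^{(\Upsilon)})^p$ controlled in expectation by Assumption~\ref{assu:Gammabound}, equation~\eqref{eq:boundgsto}. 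For the control term I would use Cauchy--Schwarz in $s$: $\big|\int_0^t K(t-s)\sigma_\ep(s,X^{\ep,v}_s)v_s\D s\big| \le \big(\int_0^t |K(t-s)|^2|\sigma_\ep(s,X^{\ep,v}_s)|^2\D s\big)^{1/2}\big(\int_0^T|v_s|^2\D s\big)^{1/2}$, and bound the second factor by $N^{1/2}$ since $v\in\cA_N$; raising to the power $p$ and applying H\"older again (with $|K|^2$ as the measure, total mass $\|K\|_2^2$) reduces this to $C_{N,p}\,\|K\|_2^{p-2}\int_0^t|K(t-s)|^2(1+|X^{\ep,v}_s|^p+\Gamma(\cdot)^p)\D s$ via linear growth of $\sigma_\ep$.

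For the stochastic term I would apply the Burkholder--Davis--Gundy inequality to get $\bE[|\int_0^t K(t-s)\sigma_\ep(s,X^{\ep,v}_s)\D W_s|^p] \le C_p\, \bE[(\int_0^t|K(t-s)|^2|\sigma_\ep(s,X^{\ep,v}_s)|^2\D s)^{p/2}]$ and then proceed exactly as for the control term, so that the $\vartheta_\ep^p$ prefactor (which is bounded for $\ep$ small) can simply be absorbed into the constant. Collecting all pieces and taking expectations, with $f(t):=\sup_{s\le t}\bE[|X^{\ep,v}_s|^p]$ (or working with $\bE[|X^{\ep,v}_t|^p]$ pointwise, which suffices here since $t$ appears only inside $K(t-s)$), yields an inequality of the form
\[
\bE\big[|X^{\ep,v}_t|^p\big] \le C\Big(1 + \int_0^t \big(|K(t-s)| + |K(t-s)|^2\big)\,\bE\big[|X^{\ep,v}_s|^p\big]\D s\Big),
\]
where $C$ depends only on $p$, $N$, $\|K\|_2$, the growth constants, the limit $x_0$ (via {\bf H1}, so $\sup_\ep|X_0^\ep|<\infty$), and the supremum in~\eqref{eq:boundgsto}, but not on $\ep$, $v$ or $t$. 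The generalised Gr\"onwall inequality for convolution Volterra kernels with an $L^1$ (respectively $L^2\subset L^1$ on the bounded interval $\bT$) kernel — see e.g.\ the Volterra--Gr\"onwall lemma used throughout~\cite{ALP17,Zhang08} — then gives $\sup_{t\in\bT}\bE[|X^{\ep,v}_t|^p]\le \overline c$ with $\overline c$ independent of $\ep,v,t$, which is~\eqref{eq:boundv}. The case $p=2$ is contained in the above (all H\"older steps only require $p\ge 2$ so that $|K|^2$-H\"older is legitimate; for $p=2$ the intermediate exponents degenerate harmlessly).

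The main obstacle is the interplay between {\bf H3b} and Assumption~\ref{assu:Gammabound}: when linear growth fails and only the autonomous-subsystem structure is available, the growth bound~\eqref{eq:UpsilonGrowth} replaces $|X^{\ep,v}_s|$ by $1+|X^{\ep,v}_s|_{\Upsilon^c}+\Gamma((X^{\ep,v}_s)^{(\Upsilon)})$, so the Gr\"onwall argument must be run on the $\Upsilon^c$-components only, treating the $\Gamma$ term as an exogenous forcing whose $p$-th moment is uniformly bounded by~\eqref{eq:boundgsto}; one must also check that the $\Upsilon$-components themselves have bounded moments, which follows since by Definition~\ref{def:autonomous} their coefficients have linear growth and do not depend on $X^{\ep,(k)}$, $k\in\Upsilon^c$, so the subsystem closes and the ordinary (H3a-type) argument applies to it. Verifying that all constants are genuinely uniform in $v\in\cA_N$ — which hinges on the single a priori bound $\int_0^T|v_s|^2\D s\le N$ entering only through the Cauchy--Schwarz step — and in $\ep$ small (using $\vartheta_\ep\le 1$ and the uniform growth constants of {\bf H3}) requires care but is otherwise routine.
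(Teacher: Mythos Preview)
Your approach is essentially the same as the paper's: split the controlled SVE into initial, drift, control and stochastic parts, bound each via H\"older/Cauchy--Schwarz and BDG against an integral involving $|K(t-s)|^2$ and the linear-growth (or $\GUG$) estimate, and close with a Volterra--Gr\"onwall lemma. The paper uses a slightly different H\"older splitting for the drift term (ending up with $|K(t-s)|^2$ rather than your $|K(t-s)|$ in the integral kernel), but this is immaterial since both lie in $L^1(\bT)$. Your treatment of the {\bf H3b} case --- first closing the autonomous $\Upsilon$-subsystem by the {\bf H3a}-type argument, then feeding $\Gamma$ in as an exogenous forcing bounded by~\eqref{eq:boundgsto} for the $\Upsilon^c$-components --- also matches the paper.

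There is, however, one genuine omission: you apply the Gr\"onwall step directly to $t\mapsto\bE[|X^{\ep,v}_t|^p]$ without first establishing that this function is finite. The Volterra--Gr\"onwall lemma (stated as Lemma~\ref{lemma:gronwallK} in the paper) takes as input a function $f:\bT\to\bR$, not $\bT\to[0,\infty]$; if $f(t)=+\infty$ on a set of positive measure, the inequality $f(t)\le C+C\int_0^t|K(t-s)|^2 f(s)\ds$ becomes vacuous and yields no bound. Nothing in the definition of a solution (condition~\eqref{eq:condsol}) or in {\bf H1}--{\bf H4} gives a priori finiteness of $p$-th moments. The paper fixes this by localising with the stopping times $\tau_n:=\inf\{t\ge 0:|X^{\ep,v}_t|\ge n\}\wedge T$, running the entire estimate on $f^n_t:=\bE[|X^{\ep,v}_t|^p\one_{t\le\tau_n}]\le n^p$, obtaining a bound uniform in~$n$, and then passing to the limit via Fatou's lemma. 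This step is standard but not optional; you should insert it.
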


\begin{remark}
This bound also holds for any solution~$\phi$ of~\eqref{eq:LDPlimit} under the same assumptions, therefore there also exists~$\overline{c}_0>0$ such that~$\sup \{ \normbT{\phi}: v\in \cS_N \text{ such that } \phi\in\cG^0_v\}  \le \overline{c}_0$.
\label{rem:boundphi}
\end{remark}
%%%%%%%%%%%%%%%%%%%%%%%%%%%%%%%%%%%%%%%%%%%%%%%%%%%%%%%%%%%%%%%%%%%%%%%%%%%%%%%%%%%
The following lemma deals with~\ref{thm:LDP}(i) by showing tightness of~$X^{\ep,v^\ep}=\cG^{\ep}\left(W + \vartheta_{\ep}^{-1}\int_0^\cdot v^{\ep}_s\ds \right)$.
\begin{lemma}[LDP Tightness]
\label{lemma:tightLDP}
Consider {\bf H1 - H4}, Assumptions~\ref{assu:K} and~\ref{assu:Gammabound}.
If~$p> 2 \vee 2/\gamma$,~$N>0$ and~$\{v^\ep\}_{\ep>0}$ is a family in~$\cA_N$, then~$X^{\ep,v^\ep}$ admits a version which is H\"older continuous on~$\bT$ of any order~$\alpha < \gamma/2 -1/p$, uniformly for all~$ \ep>0$.
Denoting again this version by~$X^{\ep,v^\ep}$, one has for all~$\ep>0$ small enough
\begin{equation}
\label{eq:holderLDP}
     \bE \left[ \left( \sup_{0\le s<t\le T} \frac{\abs{X_t^{\ep,v^\ep}-X_s^{\ep,v^\ep}}}{\abs{t-s}^\alpha} \right)^p \right]
    \le \overbar C ,
\end{equation}
for all~$\alpha\in [0, \gamma/2 -1/p)$, where~$\overbar C$ is a constant independent of~$\ep,v^\ep,t$.
Moreover, the family of random variables~$\{ X^{\ep,v^{\ep}}\}_{\ep>0}$ is tight in~$\cW^d$.
\end{lemma}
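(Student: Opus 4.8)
The plan is to establish the quantitative H\"older estimate~\eqref{eq:holderLDP} first, and then obtain tightness in~$\cW^d$ as an immediate consequence via the Kolmogorov--Chentsov criterion together with the uniform moment bound of Lemma~\ref{lemma:boundv}. The starting point is the controlled equation~\eqref{eq:Xepv}. For $0\le s<t\le T$, write the increment $X^{\ep,v^\ep}_t - X^{\ep,v^\ep}_s$ as a sum of four terms: the drift increment split into (a) $\int_0^s [K(t-u)-K(s-u)]\,b_\ep(u,X^{\ep,v^\ep}_u)\du$ and (b) $\int_s^t K(t-u)\,b_\ep(u,X^{\ep,v^\ep}_u)\du$; the Cameron--Martin shift term split analogously into (c) and (d) involving $\sigma_\ep(u,\cdot)\,v^\ep_u$; and (e) the stochastic integral $\vartheta_\ep\int_0^t K(t-u)\sigma_\ep(u,X^{\ep,v^\ep}_u)\D W_u$, which must also be decomposed into a ``$[0,s]$-kernel-difference'' piece and an ``$[s,t]$-new-mass'' piece. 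I would then bound the $L^p$-norm of each piece.

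For the deterministic pieces I would use H\"older's inequality in the convolution: the kernel-difference terms are controlled by $\left(\int_0^T |K(t+h)-K(t)|^2\dt\right)^{1/2}$ with $h=t-s$, and the new-mass terms by $\left(\int_0^h |K(u)|^2\du\right)^{1/2}$, both of which are $\cO(h^{\gamma/2})$ by Assumption~\ref{assu:K}; the remaining factor is an $L^2$-norm of $b_\ep(\cdot,X^{\ep,v^\ep}_\cdot)$ or of $\sigma_\ep(\cdot,X^{\ep,v^\ep}_\cdot)v^\ep_\cdot$. The linear growth of the coefficients (under {\bf H3a}, or the $\GUG$ structure under {\bf H3b}) together with Lemma~\ref{lemma:boundv} --- and, in the {\bf H3b} case, with~\eqref{eq:boundgsto} of Assumption~\ref{assu:Gammabound} --- gives a uniform-in-$(\ep,v^\ep)$ bound on $\bE[\|b_\ep(\cdot,X^{\ep,v^\ep}_\cdot)\|_2^p]$; for the shift term one absorbs $v^\ep$ via Cauchy--Schwarz and the constraint $\int_0^T|v^\ep_u|^2\du\le N$. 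For the stochastic piece~(e) I would apply the Burkholder--Davis--Gundy inequality to each of the two sub-pieces, turning them into $L^p$-norms of $\left(\int (\text{kernel})^2|\sigma_\ep|^2\du\right)^{1/2}$, and then proceed as above; the factor $\vartheta_\ep\to 0$ only helps. Collecting the five bounds yields $\bE\big[|X^{\ep,v^\ep}_t - X^{\ep,v^\ep}_s|^p\big]\le C\,|t-s|^{\gamma p/2}$ with $C$ independent of $\ep, v^\ep$, for all $p\ge 2$.

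With this increment bound in hand, the uniform H\"older estimate~\eqref{eq:holderLDP} for any $\alpha<\gamma/2-1/p$ follows from the Kolmogorov continuity theorem in its quantitative form (the Garsia--Rodemich--Rumsey inequality), whose constant depends only on $p,\alpha,\gamma,T$ and hence is uniform in $\ep$ and $v^\ep$; this also supplies the H\"older-continuous version of $X^{\ep,v^\ep}$. Since $p>2\vee 2/\gamma$ ensures $\gamma/2-1/p>0$, we get a genuinely positive H\"older exponent, and combined with the uniform bound $\sup_{t\in\bT}\bE[|X^{\ep,v^\ep}_t|^p]\le\overline c$ on the initial value (Lemma~\ref{lemma:boundv}, noting $X^{\ep,v^\ep}_0=X^\ep_0\to x_0$), the family $\{X^{\ep,v^\ep}\}_{\ep>0}$ satisfies a uniform modulus-of-continuity plus tightness-of-marginals condition, so it is tight in $\cW^d$ by the Arzel\`a--Ascoli characterisation of compact sets in $\cC(\bT:\bR^d)$.

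The main obstacle I anticipate is the bookkeeping around the coefficient bounds in the {\bf H3b} regime: there, $b_\ep$ and $\sigma_\ep$ need not have linear growth in all coordinates, so one cannot simply invoke Lemma~\ref{lemma:boundv} to bound $\|b_\ep(\cdot,X^{\ep,v^\ep}_\cdot)\|_2$; instead one must split the coefficients according to the autonomous subsystem structure of Definition~\ref{def:autonomous}, use linear growth in the $\Upsilon^c$-variables and in the $X^{(k)}$, $k\in\Upsilon$, and then control the $\Gamma\big((X^{\ep,v^\ep}_t)^{(\Upsilon)}\big)$ terms through the uniform moment bound~\eqref{eq:boundgsto}. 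Keeping all constants genuinely independent of $\ep$ and of the control $v^\ep$ throughout this decomposition --- in particular making sure the $N$-dependence from $\int_0^T|v^\ep|^2\le N$ enters only multiplicatively and not through $\ep$ --- is the delicate point; everything else is a routine application of H\"older, BDG, and Kolmogorov--Chentsov.
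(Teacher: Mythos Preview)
Your proposal is correct and follows essentially the same route as the paper: split the increment into six pieces (kernel-difference and new-mass for each of drift, control, and stochastic integral), bound each via H\"older/Cauchy--Schwarz/BDG together with Assumption~\ref{assu:K}, the moment bound of Lemma~\ref{lemma:boundv}, and---in the {\bf H3b} case---the estimate~\eqref{eq:ineqg}, to obtain $\bE\big[|X^{\ep,v^\ep}_t-X^{\ep,v^\ep}_s|^p\big]\le C|t-s|^{\gamma p/2}$, then invoke Kolmogorov's continuity theorem. The only cosmetic difference is that the paper concludes tightness by citing Aldous' theorem (Billingsley, Theorem~16.10) rather than the Arzel\`a--Ascoli/Kolmogorov--Chentsov route you describe, but both arguments are standard and equivalent here.
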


\begin{remark}
\label{rem:tightphi}
This lemma entails that for all~$N>0,v\in\cS_N$, any solution to~\eqref{eq:LDPlimit} also has H\"older continuous paths of the same order.
\end{remark}

\begin{lemma}\label{lemma:G0v}
The set~$\cG_v^0$ from Definition~\ref{def:G0v} is characterised by
\[
\cG_v^0 = \left\{ \phi:\Omega\to\cW^d \,\Big\lvert\, \phi_t = x_0 + \int_0^t K(t-s) \Big[ b(s,\phi_s) + \sigma(s,\phi_s)  v_s \Big]\D s, \text{  for all  }t\in\bT\right\}.
\]
\end{lemma}

%%%%%%%%%%%%%%%%%%%%%%%%%%%%%%%%%%%%%%%%%%%%%%%%%%%%%%%%%%%%%%%%%%%
The following lemma proves Theorem~\ref{thm:relax}(ii) and its proof can be found in Appendix~\ref{app:proof:lemma:goodnessLDP}.
\begin{lemma}[LDP Compactness]
\label{lemma:goodnessLDP}
Under {\bf H2, H3}, Assumptions~\ref{assu:K} and~\ref{assu:Gammabound}, the functional~$I$ in~\eqref{eq:raterelax} has compact level sets.
\end{lemma}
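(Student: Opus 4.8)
The plan is to show that the level set $\Phi_N := \{\phi \in \cW^d : I(\phi) \le N\}$ is compact in $(\cW^d, \normbT{\cdot})$ for each $N > 0$, by showing it is closed and relatively compact (sequentially, since we are in a metric space). Take a sequence $\{\phi^n\}_{n \in \bN} \subset \Phi_N$. By definition of $I$ in~\eqref{eq:raterelax}, for each $n$ there is a control $v^n \in L^2$ with $\phi^n \in \cG^0_{v^n}$ and $\half \int_0^T |v^n_s|^2 \ds \le I(\phi^n) + \tfrac1n \le N + 1$. Thus $\{v^n\}$ lies in the ball $\cS_{2N+2} \subset L^2$, which is weakly compact by Banach--Alaoglu--Bourbaki; so along a subsequence (not relabelled) $v^n \rightharpoonup v$ weakly in $L^2$, with $v \in \cS_{2N+2}$ and, by weak lower semicontinuity of the $L^2$-norm, $\half \int_0^T |v_s|^2 \ds \le \liminf_n \half \int_0^T |v^n_s|^2 \ds \le N$.

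Next I would establish compactness of the corresponding solution paths. Each $\phi^n$ solves the deterministic Volterra equation~\eqref{eq:LDPlimit} with control $v^n$. By Remark~\ref{rem:boundphi} (the deterministic analogue of Lemma~\ref{lemma:boundv}, valid under Assumptions~\ref{assu:K} and~\ref{assu:Gammabound}) the $\phi^n$ are uniformly bounded, $\normbT{\phi^n} \le \overbar c_0$; and by Remark~\ref{rem:tightphi} (the deterministic analogue of Lemma~\ref{lemma:tightLDP}) they are uniformly H\"older continuous of some fixed order $\alpha \in (0, \gamma/2)$, with a uniform H\"older seminorm bound. By Arzel\`a--Ascoli, $\{\phi^n\}$ is relatively compact in $\cW^d$, so along a further subsequence $\phi^n \to \phi$ uniformly for some $\phi \in \cW^d$.

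It then remains to pass to the limit in~\eqref{eq:LDPlimit} and identify $\phi \in \cG^0_v$, i.e.\ to show $\phi$ solves the limiting equation with control $v$. Write $\phi^n_t = x_0 + \int_0^t K(t-s)[b(s,\phi^n_s) + \sigma(s,\phi^n_s) v^n_s]\ds$. For the drift term, uniform convergence $\phi^n \to \phi$ together with uniform continuity of $b(t,\cdot)$ (from {\bf H2}) and the linear/$\GUG$-growth bounds (from {\bf H3}, with the $\Gamma$-term controlled by~\eqref{eq:boundgdet} in Assumption~\ref{assu:Gammabound}) give convergence of $\int_0^t K(t-s) b(s,\phi^n_s)\ds$ to $\int_0^t K(t-s) b(s,\phi_s)\ds$ via dominated convergence, using $K \in L^2 \subset L^1$. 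For the control term I would split
\[
\int_0^t K(t-s)\big[\sigma(s,\phi^n_s) v^n_s - \sigma(s,\phi_s) v_s\big]\ds
= \int_0^t K(t-s)\big[\sigma(s,\phi^n_s) - \sigma(s,\phi_s)\big] v^n_s\ds
+ \int_0^t K(t-s)\sigma(s,\phi_s)\big[v^n_s - v_s\big]\ds.
\]
The first piece is bounded in absolute value by $\|K(t-\cdot)\|_2 \cdot \sup_{s} |\sigma(s,\phi^n_s) - \sigma(s,\phi_s)| \cdot \sqrt{N+1} \to 0$ by uniform continuity of $\sigma$ and $\phi^n \to \phi$ uniformly; the second piece tends to $0$ because $s \mapsto K(t-s)\sigma(s,\phi_s) \one_{[0,t]}(s)$ is a fixed $L^2$ function (again $K \in L^2$, $\sigma(s,\phi_s)$ bounded) and $v^n \rightharpoonup v$ weakly in $L^2$. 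Hence $\phi_t = x_0 + \int_0^t K(t-s)[b(s,\phi_s)+\sigma(s,\phi_s)v_s]\ds$, so $\phi \in \cG^0_v$ and therefore $I(\phi) \le \half\int_0^T|v_s|^2\ds \le N$, i.e.\ $\phi \in \Phi_N$. This simultaneously proves relative compactness and closedness of $\Phi_N$, hence compactness.

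The main obstacle is the passage to the limit in the stochastic-integral-turned-drift term $\int_0^t K(t-s)\sigma(s,\phi^n_s) v^n_s \ds$: one is composing a uniformly (but not strongly) convergent sequence $\sigma(\cdot,\phi^n_\cdot)$ with a only weakly convergent sequence $v^n$, so neither factor alone suffices and the singularity of $K$ must be absorbed in the $L^2$ pairing. The splitting above is the crux, and it relies essentially on Assumption~\ref{assu:K} ($K \in L^2$) to make the weak-convergence argument work uniformly in $t \in \bT$; a minor additional point is checking the convergence is uniform in $t$ (so that the limit $\phi$ genuinely satisfies the equation for every $t$), which follows since $\|K(t-\cdot)\one_{[0,t]}\|_2 \le \|K\|_2$ uniformly and, for the weak-convergence piece, the family $\{K(t-\cdot)\sigma(\cdot,\phi_\cdot)\one_{[0,t]} : t \in \bT\}$ is compact in $L^2$ (continuity in $t$ via Assumption~\ref{assu:K}), so weak convergence is uniform over it.
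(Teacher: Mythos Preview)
Your proof is correct and follows essentially the same route as the paper's: Arzel\`a--Ascoli via the deterministic bounds of Remarks~\ref{rem:boundphi} and~\ref{rem:tightphi}, weak compactness of the controls in~$\cS_{2N+2}$, and the same splitting of the $\sigma$-term to pass to the limit in~\eqref{eq:LDPlimit}. The only cosmetic differences are that the paper separates relative compactness and closedness into two steps (whereas you prove sequential compactness in one pass), and that your final paragraph on uniformity in~$t$ is unnecessary: pointwise convergence for each fixed~$t$ suffices, since both sides of~\eqref{eq:LDPlimit} are continuous in~$t$.
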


%%%%%%%%%%%%%%%%%%%%%%%%%%%%%%%%%%%%%%%%%%%%%%%%%%%%%%%%%%%%%%%%%%%%%%
Leveraging on the above lemmas, the Large Deviations Principle (Theorem~\ref{thm:LDP}) is a direct consequence of Theorem~\ref{thm:relax}. 

\subsubsection{Proof of Lemma~\ref{lemma:G0v}}
%\forest{Fix~$N>0$. Consider a family~$\{v^\ep\}_{\ep>0}$ in~$\cA_N$ converging in distribution to~$v\in\cA_N$.
%\forest{We take an arbitrary subsequence~$\{v^{\ep_n}\}_{n\in\bN}$ and prove convergence along a subsequence thereof.} For~$ \ep>0$ small enough, the SVE~\eqref{eq:Xep} is pathwise unique by {\bf H4} and we showed that its controlled counterpart~\eqref{eq:Xepv} also has a unique strong solution~$X^{\ep,v^\ep}$.Lemma~\ref{lemma:tightLDP} shows that the family~$\{ X^{\ep_n,v^{\ep_n}} \}_{n\ge0}$ is tight in~$\cW^d$, as required by condition (i) of Theorem~\ref{thm:relax}.}
%\forest{Moreover, the trajectories of~$v^{\ep_n}$ belong to a compact space with respect to the weak topology so the family of controls is tight as a sequence of~$\cS_N$-valued random variables. Since these are both Polish spaces, the family~$\big\{X^{\ep_n,v^{\ep_n}},v^{\ep_n} \big\}_{n\ge0}$ is tight in~$\cW^d\times \cS_N$.}

%Hence there exists a subsequence, denoted hereafter~$\big\{ X^n,v^n \big\}_{n\ge0}$, that converges weakly to a~$\cW^d\times \cS_N$-valued random variable~$(X^0,v)$ defined on a possibly different probability space~$(\Omega^0,\cF^0,\bP^0)$ as~$n$ tends to~$+\infty$. 

For~$N\in\bN$ and~$v\in\cA_N$, we first need to identify~$\cG_{v,N}^0$, defined in~\eqref{def:G0v}. Consider a subsequence~$\{\ep_n\}_{n\in\bN}\subset\bR_+$ with ~$\lim_{n\uparrow\infty} \ep_n =0$
and a sequence~$\{v^{\ep_n}\}_{n\in\bN}\subset\cA_N$  such that~$\lim_{n\uparrow\infty} v^{\ep_n}= v$ in distribution, and assume that~$X^n:=X^{\ep_n,v^{\ep_n}}$ converges in distribution to some random variable~$\phi$ with values in~$\cW^d$. We also denote~$v^n,\ep_n,X^n_0, b_n, \sigma_n$ along this subsequence.

By Skorohod representation theorem we can work with almost sure convergence for the purpose of identifying the limit. Hence~$\big\{ X^n,v^n \big\}_{n\ge0}$ converges almost surely in the product topology on~$\cW^d\times \cS_N$, and the limit is the~$\cW^d\times \cS_N$-valued random variable~$(\phi,v)$.
%Or just apply the as convergence, or $L^2$ convergence (by uniform integrability) would be much quicker. Strategy: show that\[X^n_t - (x_0 + \int_0^t \big(b(\phi_s)+\sigma(\phi_s)v_s\big)\ds\]goes to zero. Since~$X^n\to\phi$ then $\phi$ satisfies this equation.
The convergence of the couple also takes place in distribution, so that we can follow the technique in~\cite{Chiarini14} to identify the limit.
For~$t\in\bT$, define~$\Phi_t:\cS_N \times\cW^d \to \bR$ as
\[
\Phi_t(f,\omega):=\abs{\omega_t - x_0 - \int_0^t K(t-s) \Big[b(s,\omega_s) + \sigma(s,\omega_s)f_s\Big] \D s}\wedge 1.
\]
Clearly,~$\Phi_t$ is bounded and we show that it is also continuous. Indeed, let~$\omega^n \to \omega$ in~$\cW^d$ and~$f^n\to f$ in~$\cS_N$ with respect to the weak topology. {\bf H2} implies the existence of continuous moduli of continuity~$\rho_b$ and~$\rho_{\sigma}$ for both coefficients on compact subsets
(see Definition~\ref{def:modulusG}). 
Since the paths~$\omega^n,n\ge1$ and $\omega$ are continuous, they are also uniformly bounded and hence these moduli are available.
Then, using Cauchy-Schwarz inequality and the fact that $\abs{x\wedge1 -y\wedge1} \le \abs{x-y}$ for all $x,y>0$, 
\begin{align*}
    \abs{\Phi_t(f,\omega) - \Phi_t(f^n,\omega^n)}
    & \le \abs{\omega_t-\omega^n_t} + \int_0^t \abs{K(t-s)}\abs{ b(s,\omega_s)-b(s,\omega_s^n)} \D s \\
    & \quad + \int_0^t \abs{K(t-s)} \abs{ \big( \sigma(s,\omega_s)-\sigma(s,\omega_s^n) \big) f^n_s 
    + \sigma(s,\omega_s) \big( f_s-f^n_s \big)
    } \D s \\
    & \le \normbT{\omega-\omega^n} + \normbT{\rho_b(\abs{\omega-\omega^n}) } \norm{K}_1
    + \normbT{\rho_{\sigma}(\abs{\omega-\omega^n})} \norm{K}_{2}  \norm{f^n}_{2} \\
    & \quad + \normbT{\sigma(\cdot,\omega)} \int_0^t \abs{K(t-s)} \abs{f_s-f^n_s}\ds .
\end{align*}
Since $K(t-\cdot)\in L^2$ and~$f_n$ tends to~$f$ weakly in~$L^2$ 
then the last integral converges to zero as~$n$ goes to infinity.
Moreover~$\lim_{n\uparrow\infty} \normbT{\omega-\omega^n}=0$,~$\norm{f^n}_2\le \sqrt N$ for all~$n\ge0$ and~$ \norm{K}_2 + \normbT{\sigma(\cdot,\omega)} < \infty$, which proves that~$\Phi_t$ is continuous, and therefore 
\[
\lim_{n\uparrow\infty} \bE\left[\Phi_t(v^n,X^n)\right] = \bE\left[\Phi_t(v,\phi)\right].
\]
We now prove that the left-hand side is actually equal to zero.
We start with the observation that, using BDG inequality,
\begin{align}
\label{eq:Phi}
    \bE\left[\Phi_t(v^n,X^n)\right]
    \le & \abs{X^n_0-x_0} + \int_0^t \abs{K(t-s)} \bE \big[ \abs{b_n(s,X^n_s)-b(s,X^n_s)} \big] \ds \nonumber \\
    & + \int_0^t \abs{K(t-s)} \bE\left[ \abs{\sigma_n(s,X^n_s)-\sigma(s,X^n_s)} \abs{v^n_s}\right] \ds  \nonumber \\
    & + \vartheta_{\ep_n}\bE \left[\int_0^t \abs{K(t-s) \sigma_n(s,X^n_s)}^2 \ds\right]^\half.
\end{align}
The bounds~\eqref{eq:Holdertrick} and~\eqref{eq:ineqg} show how to control the last term under {\bf H3a} and {\bf H3b} respectively, hence there exists $C_1>0$ independent of $t$ and $n$ such that
$\bE \left[  \int_0^t \abs{K(t-s) \sigma_n(s,X^n_s)}^2 \ds\right]^\half\le C_1$.

However the convergence of~$b_n,\sigma_n$ only occurs on compact subsets so we use a localisation argument. For all~$n\ge0$,~$M>0$ we introduce
\begin{equation}
A^M_n := \Big\{\omega\in\Omega: \normbT{X^n(\omega)} > M \Big\}.
\label{eq:AnMdef}
\end{equation}
The uniform (in~$n\in\bN$) H\"older regularity of~$X^n$, encompassed by~\eqref{eq:holderLDP}, entails the existence, for all~$p>2\vee 2/\gamma$, of~$C_2(p),C_3(p)>0$ independent of~$n$ such that
\[
\bE \left[\sup_{t\in\bT} \abs{X^n_t}^p \right] \le C_2(p) \big(\abs{X^n_0}^p + T^{p\alpha}\big) \le C_3(p),
\]
for some~$0<\alpha < \gamma/2 - 1/p$ and where~$X^n_0$ is uniformly bounded by~$2\abs{x_0}$ for~$n$ large enough. 
Markov's inequality then implies that
\begin{equation}
\lim_{M\uparrow\infty} \sup_{n\in\bN} \bP\big(A^M_n \big) 
\le \lim_{M\uparrow\infty} \sup_{n\in\bN} \frac{C_3(p)}{M^p}
=0.
\label{eq:AnMlimit}
\end{equation}
Moreover, for all~$n\in\bN$,~$\omega\in \Omega \setminus A^M_n$, and~$t\in\bT$,~$\abs{X^n_t (\omega)}$ is bounded by~$M$, which means
\[
\abs{b_n(t,X^n_t(\omega))-b(t,X^n_t(\omega))}
\le \norm{b_n(t,\cdot)-b(t,\cdot)}_M,
\]
which tends to zero uniformly on~$\bT$ as~$n$ goes to infinity (and likewise for~$\sigma_n$) from {\bf H2}.
Define now
\begin{align*}
I_n := & \int_0^t \abs{K(t-s)}  \Big( \abs{b_n(s,X^n_s)-b(s,X^n_s)} + \abs{\sigma_n(s,X^n_s)-\sigma(s,X^n_s)} \abs{v^n_s} \Big) \ds
\end{align*}
and observe that, using Jensen and Cauchy-Schwarz inequalities, the growth condition on the coefficients from {\bf H3} and the moment bounds on $X^n$ from~\eqref{eq:boundv}, there exists $C_4>0$ independent of~$n$ such that
\begin{align}
\bE \big[ \abs{I_n}^2 \big]
& \le 2 t \int_0^t \abs{K(t-s)}^2   \bE \big[\abs{b_n(s,X^n_s)-b(s,X^n_s)}^2 \big] \ds \nonumber \\
& \quad + 2N \int_0^t \abs{K(t-s)}^2 \bE \left[\abs{\sigma_n(s,X^n_s)-\sigma(s,X^n_s)}^2 \right] \ds \nonumber \\
& \le 2 \norm{K}_{2}^2   \sup_{s\le t} \bigg\{ t \bE \big[\abs{b_n(s,X^n_s)-b(s,X^n_s)}^2 \big] +  N \bE \left[\abs{\sigma_n(s,X^n_s)-\sigma(s,X^n_s)}^2 \right] \bigg\}\nonumber \\
& \le C_4.
\label{eq:C4}
\end{align}
Let us fix~$\epsilon>0$ and choose~$M_\epsilon>0$ large enough such that~$\sup_{n\in\bN} \bP\big(A^{M_\epsilon}_n \big) \le \epsilon^2  / C_4$; this choice is possible because of~\eqref{eq:AnMlimit}. Therefore, using the bound~\eqref{eq:C4} and Cauchy-Schwarz inequality to separate $I_n$ and~$\one_{A_n^{M_\ep}}$, one obtains
\begin{align*}
    \limsup_{n\uparrow\infty} \bE [I_n] 
    & = \limsup_{n\uparrow\infty} \bE \Big[ I_n \big( \one_{A_n^{M_\ep}} + \one_{\Omega \setminus A_n^{M_\ep}} \big) \Big] \\
    & \le \limsup_{n\uparrow\infty} \Big\{ \sqrt{ C_4 \bP\big (A^{M_\ep}_n \big)} + \norm{K}_1 \normbT{\norm{b_n-b}_{M_\ep}} + \sqrt N \norm{K}_2 \normbT{\norm{\sigma_n-\sigma}_{M_\ep}} \Big\}
    \le \epsilon.
\end{align*}
It follows from~\eqref{eq:Phi} that
\begin{align*}
    \lim_{n\uparrow\infty} \bE\big[\Phi_t(v^n,X^n)\big]
    & \le \lim_{n\uparrow\infty} \Big\{ \abs{X^n_0-x_0} + \bE [I_n] + \vartheta_{\ep_n} C_1 \Big\}
    \le \epsilon,
\end{align*}
hence $\lim_{n\uparrow\infty} \bE\left[\Phi_t(v^n,X^n)\right]=0$ since~$\epsilon>0$ was chosen arbitrarily.
The equality~$\bE\left[\Phi_t(v,\phi)\right]=0$ implies that~$\phi$ satisfies~\eqref{eq:LDPlimit} almost surely, for all~$t\in\bT$. Since~$\phi$ has continuous paths, it satisfies~\eqref{eq:LDPlimit} for all~$t\in\bT$, almost surely, which means $\cG^0_{v,N}$ consists of all the solutions of~\eqref{eq:LDPlimit}. Since this definition is independent of~$N$, it extends to~$\cG_v^0$, which yields the claim. \qed

%%%%%%%%%%%%%%%%%%%%%%%%%%%%%%%%%%%%%%%%%%%%%%%%%%%%%%%%%%%%%%%%%%%%%%%%%%%%%%%%%%%%%%%%%%%%%%%%%%%%%%%%%%%%%%%%%%%%%%%%%%%%%%%%%%%%%%%%%%%%%%%%%%%%%%%%%%%%%%%%%%%%%%%%

\subsection{Moderate Deviations}
Let~$h_\ep$ tend to infinity such that~$\vartheta_\ep h_\ep$ tends to zero as~$\ep$ goes to zero and define~$\overbar{X}$ to be the limit in law of $X^\ep$, which we identified in the previous subsection as a solution of the Volterra equation
\begin{equation}
\overbar{X}_t = x_0 + \int_0^t K(t-s) b(s,\overbar{X}_s) \D s.
\label{eq:Xbar}
\end{equation}
Then the MDP for~$\{X^\ep\}_{\ep>0}$ is equivalent to the LDP for the family~$\{\eta^\ep\}_{\ep>0}$ defined as
\[
\eta^\ep := \frac{X^\ep-\overbar{X}}{\vartheta_\ep h_\ep} = \frac{\cG^\ep(W)-\overbar{X}}{\vartheta_\ep h_\ep}=: \cT^\ep(W) ,
\]
where~$\cT^\ep:\cW^m \to \cW^d$ is a Borel-measurable map for each~$\ep>0$.
Therefore $\eta^{\ep}$ satisfies the following SVE for all $\ep>0$, and is its unique solution if {\bf H4} holds.
\begin{equation}
    \eta^{\ep}_t = \frac{X^\ep_0-x_0}{\vartheta_\ep h_\ep} 
    + \int_0^t K(t-s) \frac{ b_\ep \big(s,\overbar{X}_s + \vartheta_\ep h_\ep   \eta^{\ep}_s \big)-b(s,\overbar{X}_s)}{\vartheta_\ep h_\ep  } \D s 
     + \int_0^t K(t-s)  \frac{\sigma_\ep \big(s,\overbar{X}_s + \vartheta_\ep h_\ep   \eta^{\ep}_s \big)}{h_\ep  } \D W_s.
    \label{eq:etaep}
\end{equation}
Similarly to the LDP case we are interested in a certain shift of the driving Brownian motion, controlled by~$v\in\cA$. For all~$\ep>0$, let
\begin{equation}
\label{eq:etaepvdef}
    \eta^{\ep,v} :=  \cT^\ep \left(W + h_\ep   \int_0^\cdot v_s\ds \right)
    =  \frac{\cG^\ep\big( W + h_\ep   \int_0^\cdot v_s \ds \big)-\overbar{X}}{\vartheta_\ep h_\ep  }.
\end{equation}
For convenience we introduce the sequence~$\{\Theta^{\ep,v}\}_{\ep>0}$ defined, for all~$\ep>0$, $v\in\cA$, $t\in\bT$, by
\begin{align*}
    \Theta^{\ep,v}_t 
    :=& \, \cG^\ep\left( W + h_\ep   \int_0^\cdot v_s \ds \right)(t) \\
    =& \, X^\ep_0 + \int_0^t K(t-s)  \Big[ b_\ep(s,\Theta_s^{\ep,v}) + \vartheta_\ep  h_\ep   \sigma_\ep(s,\Theta^{\ep,v}_s)  v_s \Big]\D s 
    + \vartheta_\ep  \int_0^t K(t-s)  \sigma_\ep(s,\Theta^{\ep,v}_s)\D W_s.
\end{align*}
This sequence satisfies the bound~\eqref{eq:boundv} and converges weakly towards~$\overbar{X}$ since~$\vartheta_\ep  h_\ep   \sigma_\ep$ tends to zero as~$\ep$ tends to zero. Finally the process defined by~\eqref{eq:etaepvdef} satisfies
\begin{align}
\label{eq:etaepv}
    \eta^{\ep,v}_t =  \frac{X^\ep_0-x_0}{\vartheta_\ep h_\ep  } 
    & + \int_0^t K(t-s) \frac{ b_\ep \big(s,\Theta^{\ep,v}_s \big)-b(s,\overbar{X}_s)}{\vartheta_\ep h_\ep  } \D s \nonumber \\
    & + \int_0^t K(t-s)   \sigma_\ep \big(s,\Theta^{\ep,v}_s \big)  v_s \D s
    + \frac{1}{h_\ep  } \int_0^t K(t-s)  \sigma_\ep \big(s,\Theta^{\ep,v}_s \big) \D W_s.
\end{align}
For all $v\in\cA$, we define $\cT^0_v$ to be the solution of the limiting equation
\begin{equation}
    \psi_t
    = \int_0^t K(t-s) \Big[ \nabla b(s,\overbar{X}_s) \psi_s + \sigma(s,\overbar{X}_s) v_s
    \Big] \ds.
\label{eq:MDPlimit}
\end{equation}
The form of the limit equation is dramatically simpler than for the LDP and much easier to compute. Moreover $\cT^0_v$ is well defined because the linearity of the equation and Assumption~\ref{assu:K} grant uniqueness for free, provided~$\nabla b$ exists.
Hence we will need the following assumptions:
\begin{enumerate}[{\bfseries H1.}]
\setcounter{enumi}{4}
    \item For each $t\in\bT$, the function~$b(t,\cdot)$ is continuously differentiable and~$b$ is Lipschitz continuous.
    \item There exists~$\delta>0$ such that~$\sigma(t,\cdot)$ is locally~$\delta$-H\"older continuous, uniformly for all~$t\in\bT$.
    \item $\lim_{\ep\downarrow0} \big(\vartheta_\ep h_\ep  \big)^{-1} \abs{X^\ep_0-x_0} =0$. 
    \item There exist $\ep_0>0$, a sequence $\{\nu_\ep\}_{\ep>0}$ with $\lim_{\ep\downarrow 0}\nu_\ep(\vartheta_\ep h_\ep  )^{-1}=0$ and a function~$\Xi:\bR^d\to\bR$
    such that $\abs{b_\ep(t,x)-b(t,x)} \le \nu_\ep  \Xi(x)$ for all~$t\in\bT$, $\ep\in(0,\ep_0)$, where for all~$p\ge1$, $N>0$, 
    \begin{equation}
    \label{eq:Xibound}
    \sup \bigg\{ \bE\Big[ \abs{ \Xi \big(\Theta^{\ep,v}_t \big) }^p \Big], \ep\in(0,\ep_0), v\in\cA_N,  t\in\bT \bigg\} < \infty.
    \end{equation}
\end{enumerate}
\begin{remark}
{\bf H5} entails that~\eqref{eq:Xbar} has a unique solution and yields the bound $\normbT{\nabla b(\cdot,\overbar X)}<\infty$ by continuity. {\bf H7} implies {\bf H1}. We have already proved in Lemma~\ref{eq:boundv} that the moments of all orders of~$\Theta^{\ep,v}$ are bounded  hence~\eqref{eq:Xibound} is automatically satisfied if~$\Xi$ is of polynomial growth. This is however not sufficient for the applications we have in mind where~$\Xi$ is of exponential growth. 
\label{remark:assumdp}
\end{remark} 
The main theorem of this section is the following.
\begin{theorem}[Moderate Deviations]
Under {\bf H2 - H8}, Assumptions~\ref{assu:K} and~\ref{assu:Gammabound}, the family~$\{\eta^\ep\}_{\ep>0}$ satisfies a Large Deviations Principle (equivalently~$\{X^\ep\}_{\ep>0}$ satisfies a Moderate Deviations Principle) with speed~$h_\ep^2$ and rate function
\begin{equation}
\Lambda(\psi):= \inf\bigg\{ \half\int_0^T \abs{v_t}^2\dt: v\in  L^2, \psi=\cT^0_v \bigg\},
\label{eq:grfMDP}
\end{equation}
and~$\Lambda(\psi)=+\infty$ if this set is empty.
\label{thm:MDP}
\end{theorem}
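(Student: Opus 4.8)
The plan is to apply the abstract result Theorem~\ref{thm:relax} to the family of Borel maps $\cG^\ep := \cT^\ep$, with limiting singletons $\cG^0_v := \{\cT^0_v\}$, the small parameter $h_\ep^{-1}$ and the speed $h_\ep^2$ (in place of $\vartheta_\ep$ and $\vartheta_\ep^{-2}$); the argument then runs parallel to the proof of Theorem~\ref{thm:LDP}, the decisive simplification being that the limiting equation~\eqref{eq:MDPlimit} is \emph{linear}. Indeed, {\bf H5} gives $\normbT{\nabla b(\cdot,\overbar X)}<\infty$ by continuity, so that together with Assumption~\ref{assu:K} equation~\eqref{eq:MDPlimit} has a unique solution $\cT^0_v$; consequently $v\mapsto\cT^0_v$ is a genuine map, Assumption~\ref{assu:relax} holds trivially with $\psi^\delta=\psi$, and the rate function of Theorem~\ref{thm:relax} is exactly~\eqref{eq:grfMDP}. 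What remains is to reproduce, for $\eta^{\ep,v}$ and $\Theta^{\ep,v}$, the three technical ingredients underpinning Theorem~\ref{thm:LDP}: uniform moment bounds, uniform H\"older regularity and tightness, and compactness of the level sets of $\Lambda$.

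First I would establish that, for every $p\ge2$ and $N>0$, $\sup\{\bE[\abs{\eta^{\ep,v}_t}^p]:t\in\bT,v\in\cA_N,\ep\text{ small}\}<\infty$. Starting from~\eqref{eq:etaepv}: the first term is controlled by {\bf H7}; in the drift integral I split $b_\ep(s,\Theta^{\ep,v}_s)-b(s,\overbar X_s)=\big(b_\ep-b\big)(s,\Theta^{\ep,v}_s)+\big(b(s,\Theta^{\ep,v}_s)-b(s,\overbar X_s)\big)$, bounding the first piece by $\nu_\ep\,\Xi(\Theta^{\ep,v}_s)$ via {\bf H8} (so that, divided by $\vartheta_\ep h_\ep$ and using~\eqref{eq:Xibound}, it is uniformly $L^p$-bounded and in fact vanishes) and the second by $L\abs{\Theta^{\ep,v}_s-\overbar X_s}=L\vartheta_\ep h_\ep\abs{\eta^{\ep,v}_s}$ via the Lipschitz property in {\bf H5}; the control drift $\int_0^\cdot K(\cdot-s)\sigma_\ep(s,\Theta^{\ep,v}_s)v_s\ds$ is handled by Cauchy--Schwarz, the linear (resp.\ $\GUG$) growth of $\sigma_\ep$ from {\bf H3}, and the moment bounds for $\Theta^{\ep,v}$ furnished by Lemma~\ref{lemma:boundv} (whose proof applies verbatim to $\Theta^{\ep,v}$, $\vartheta_\ep h_\ep$ being still a sequence tending to zero); and the stochastic term carries the prefactor $h_\ep^{-1}\to0$ and is dominated via BDG exactly as in~\eqref{eq:Phi}. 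A Volterra--Gr\"onwall inequality (using $K\in L^2$) closes the estimate. The uniform H\"older bound and the tightness of $\{\eta^{\ep,v^\ep}\}_{\ep>0}$ in $\cW^d$ then follow by repeating Lemma~\ref{lemma:tightLDP}: Assumption~\ref{assu:K} controls $\int_0^h\abs{K(t)}^2\dt+\int_0^T\abs{K(t+h)-K(t)}^2\dt=\cO(h^\gamma)$, which combined with the moment bounds yields increments of order $\abs{t-s}^{\gamma/2-1/p}$, hence tightness via Kolmogorov--Chentsov.

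Next I would identify the weak limit. Along a subsequence, $(\eta^{\ep_n,v^{\ep_n}},v^{\ep_n})$ converges weakly in $\cW^d\times\cS_N$ to some $(\psi,v)$ on a new probability space; I introduce the bounded continuous functional $\Psi_t(f,\omega):=\abs{\omega_t-\int_0^t K(t-s)\big[\nabla b(s,\overbar X_s)\omega_s+\sigma(s,\overbar X_s)f_s\big]\ds}\wedge1$ and show $\bE[\Psi_t(v^n,\eta^n)]\to0$, so that $\psi=\cT^0_v$. The initial term vanishes by {\bf H7}; the $b_\ep-b$ piece vanishes by {\bf H8} and~\eqref{eq:Xibound}; for the genuine drift I use the exact Taylor formula $\frac{b(s,\Theta^{\ep,v}_s)-b(s,\overbar X_s)}{\vartheta_\ep h_\ep}=\Big(\int_0^1\nabla b\big(s,\overbar X_s+r\vartheta_\ep h_\ep\eta^{\ep,v}_s\big)\,\D r\Big)\eta^{\ep,v}_s$ and let $\ep\to0$, using $\vartheta_\ep h_\ep\to0$, the continuity of $\nabla b$ ({\bf H5}), the uniform moment bounds (for uniform integrability of the remainder), and the weak convergence $\eta^n\to\psi$; for the control term $\int_0^\cdot K(\cdot-s)\sigma_\ep(s,\Theta^{\ep,v^\ep}_s)v^\ep_s\ds$ I write $\sigma_\ep(s,\Theta^{\ep,v^\ep}_s)=(\sigma_\ep-\sigma)(s,\Theta^{\ep,v^\ep}_s)+\big(\sigma(s,\Theta^{\ep,v^\ep}_s)-\sigma(s,\overbar X_s)\big)+\sigma(s,\overbar X_s)$, the first piece vanishing by {\bf H2} after the localisation of~\eqref{eq:AnMdef}--\eqref{eq:AnMlimit}, the second being $\le C_M(\vartheta_\ep h_\ep)^\delta\abs{\eta^{\ep,v^\ep}_s}^\delta\to0$ in $L^1$ by the local $\delta$-H\"older continuity {\bf H6} and the moment bounds, and the last, deterministic, term pairing with the weakly convergent $v^\ep$ since $K(t-\cdot)\,\sigma(\cdot,\overbar X)\in L^2$ --- exactly the mechanism in the continuity proof of $\Phi_t$ in Theorem~\ref{thm:LDP}. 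Finally, compactness of the level sets of $\Lambda$ mirrors Lemma~\ref{lemma:goodnessLDP}: from $\Lambda(\psi_n)\le M$ pick $\frac1n$-optimal controls $v_n\in\cS_{2M+2}$, extract a weak limit $v_n\to v$, use the linearity of~\eqref{eq:MDPlimit} and the uniform H\"older bound to pass $\psi_n=\cT^0_{v_n}$ to $\cT^0_v$ in $\cW^d$, and conclude by weak lower semicontinuity of $\norm{\cdot}_2$. With all hypotheses of Theorem~\ref{thm:relax} verified for $\cT^\ep$, the Laplace principle --- equivalently the LDP --- holds for $\{\eta^\ep\}_{\ep>0}$ with speed $h_\ep^2$ and rate function $\Lambda$, which is precisely the MDP for $\{X^\ep\}_{\ep>0}$. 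The main obstacle is the drift identification above: controlling the Taylor remainder uniformly in $\ep$ and $v$ while simultaneously taking the weak limit of $\eta^{\ep,v^\ep}$ requires carefully combining the $\cC^1$ regularity of $b(t,\cdot)$, the vanishing of $\vartheta_\ep h_\ep$, and the uniform moments --- and, relatedly, ensuring that the exponent $\delta$ in {\bf H6} suffices to make the $\sigma$-difference vanish after integration against the singular kernel.
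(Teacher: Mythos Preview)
Your proposal is correct and follows essentially the same architecture as the paper: apply Theorem~\ref{thm:relax} to $\cT^\ep$, note that linearity of~\eqref{eq:MDPlimit} makes Assumption~\ref{assu:relax} trivial, and verify (i)--(ii) via the MDP analogues of Lemmata~\ref{lemma:boundv}--\ref{lemma:goodnessLDP}, using the same $\Psi_t$-functional and localisation device for the weak-limit identification. One minor point of contrast: for the drift remainder the paper invokes a second-order Taylor bound $\abs{R^\ep(s)}\le C_R\vartheta_\ep h_\ep\abs{\eta^{\ep,v}_s}^2$, whereas your integral-form remainder $\big(\int_0^1\nabla b(\cdot,\overbar X_s+r\vartheta_\ep h_\ep\eta^{\ep,v}_s)\,\D r-\nabla b(\cdot,\overbar X_s)\big)\eta^{\ep,v}_s$ relies only on the $\cC^1$ regularity actually granted by {\bf H5} and is therefore slightly cleaner.
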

The proof of the moderate deviations theorem follows a similar structure to that of Theorem~\ref{thm:LDP},
making use of Theorem~\ref{thm:relax}.
It will rely on moment bounds in Lemma~\ref{lemma:boundeta} 
(proved in Section~\ref{app:proof:lemma:boundeta}), 
tightness in Lemma~\ref{lemma:tightMDP} (proved in Section~\ref{app:proof:lemma:tightMDP}),
weak convergence in Lemma~\ref{lemma:weakcvgMDP} 
(proved in Section~\ref{app:proof:lemma:weakcvgMDP}),
and finally compactness of the level sets in Lemma~\ref{lemma:goodnessMDP}.

%%%%%%%%%%%%%%%%%%%%%%%%%%%%%%%%%%%%%%%%%%%%%%%%%%%%%%%%%%%%%%%%%%%
\begin{lemma}[MDP Moment bound]
\label{lemma:boundeta}
Under {\bf H2 - H5, H7, H8}, Assumptions~\ref{assu:K} and~\ref{assu:Gammabound}, for all~$p\ge 2$, $N>0$, $v\in\cA_N$ and~$\ep>0$ small enough, there exists~$\widehat{c}>0$ independent of~$\ep,v,t$ such that
\begin{equation}
     \sup_{t\in\bT} \bE \left[ \abs{\eta^{\ep,v}_t}^p \right] \le \widehat{c}.
\label{eq:boundveta}
\end{equation}
\end{lemma}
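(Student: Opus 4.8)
The plan is to obtain the uniform bound $\sup_{t\in\bT}\bE[\abs{\eta^{\ep,v}_t}^p]\le\widehat c$ by a Grönwall-type argument applied directly to the controlled equation~\eqref{eq:etaepv}, splitting its four terms and controlling each one separately before invoking a singular (Volterra) Grönwall inequality adapted to the kernel $K$ satisfying Assumption~\ref{assu:K}. First I would fix $p\ge 2$, $N>0$, $v\in\cA_N$, and for $\ep>0$ small enough apply the triangle inequality followed by Jensen's inequality in~\eqref{eq:etaepv}, so that $\bE[\abs{\eta^{\ep,v}_t}^p]$ is bounded by a constant times the sum of the $p$-th moments of the four contributions: (1) the initial term $(\vartheta_\ep h_\ep)^{-1}\abs{X^\ep_0-x_0}$, which tends to zero by {\bf H7} and is hence uniformly bounded; (2) the drift-difference term $\int_0^t K(t-s)\big(b_\ep(s,\Theta^{\ep,v}_s)-b(s,\overbar X_s)\big)/(\vartheta_\ep h_\ep)\ds$; (3) the controlled diffusion-drift term $\int_0^t K(t-s)\sigma_\ep(s,\Theta^{\ep,v}_s)v_s\ds$; and (4) the stochastic integral term $h_\ep^{-1}\int_0^t K(t-s)\sigma_\ep(s,\Theta^{\ep,v}_s)\D W_s$.

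For term~(2) I would insert $\pm b(s,\overbar X_s+\vartheta_\ep h_\ep\eta^{\ep,v}_s)=\pm b(s,\Theta^{\ep,v}_s)$ and split it into a model-error part $b_\ep-b$ evaluated at $\Theta^{\ep,v}_s$, which is bounded by $\nu_\ep\Xi(\Theta^{\ep,v}_s)$ via {\bf H8} so that after dividing by $\vartheta_\ep h_\ep$ it is controlled uniformly using $\lim_\ep \nu_\ep(\vartheta_\ep h_\ep)^{-1}=0$ together with~\eqref{eq:Xibound}; and a Lipschitz part $b(s,\overbar X_s+\vartheta_\ep h_\ep\eta^{\ep,v}_s)-b(s,\overbar X_s)$, which by {\bf H5} is bounded by $C\vartheta_\ep h_\ep\abs{\eta^{\ep,v}_s}$ and hence, after dividing by $\vartheta_\ep h_\ep$, yields precisely the term $\int_0^t\abs{K(t-s)}\,C\,\bE[\abs{\eta^{\ep,v}_s}^p]^{1/p}\ds$ feeding the Grönwall loop. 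For term~(3) I would use that $\sigma_\ep$ has linear growth uniformly in $\ep$ (this follows from {\bf H3} combined, under {\bf H3b}, with the $\GUG$-structure and Assumption~\ref{assu:Gammabound} giving $\sup_{t}\bE[\abs{\Gamma((\Theta^{\ep,v}_t)^{(\Upsilon)})}|^p]<\infty$), so that $\abs{\sigma_\ep(s,\Theta^{\ep,v}_s)}\le C(1+\abs{\Theta^{\ep,v}_s})$; since $\Theta^{\ep,v}$ itself satisfies the moment bound~\eqref{eq:boundv} of Lemma~\ref{lemma:boundv} (noted in the text just before the lemma), this term is bounded via Cauchy--Schwarz by $\norm{K}_2\sqrt N$ times a constant, uniformly in $\ep,v,t$. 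For term~(4), the Burkholder--Davis--Gundy inequality plus the same linear-growth-and-moment-bound argument for $\sigma_\ep(\cdot,\Theta^{\ep,v})$ gives a bound of order $h_\ep^{-1}\big(\int_0^t\abs{K(t-s)}^2\ds\big)^{1/2}\le h_\ep^{-1}\norm{K}_2\cdot C$, which tends to zero.

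Collecting these estimates yields an inequality of the form $\bE[\abs{\eta^{\ep,v}_t}^p]\le C_0+C_1\int_0^t\abs{K(t-s)}\,\bE[\abs{\eta^{\ep,v}_s}^p]^{1/p}\ds$ with constants independent of $\ep,v,t$; raising to suitable powers and applying Jensen or Hölder to deal with the exponent $1/p$ inside the integral (replacing $\abs{K(t-s)}$ by $\abs{K(t-s)}^2$ where needed and absorbing via $K\in L^2$), one arrives at $g(t):=\sup_{s\le t}\bE[\abs{\eta^{\ep,v}_s}^p]\le C_0'+C_1'\int_0^t \abs{K(t-s)}^{\!2}\,g(s)\ds$, and then a singular Grönwall lemma (the Volterra-type Grönwall inequality valid for $L^1$ kernels, as used already for Lemma~\ref{lemma:boundv}) closes the estimate with a constant $\widehat c$ depending only on $p,N,T,\norm{K}_2$ and the bounds in {\bf H5, H8} and Assumption~\ref{assu:Gammabound}. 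The main obstacle I anticipate is handling the singular kernel inside the Grönwall step together with the concave exponent $1/p$ arising from the Lipschitz term: one must first take expectations, then pass to $g(t)=\sup_{s\le t}(\cdots)$ carefully so that the convolution structure is preserved, which is exactly the technical point where Assumption~\ref{assu:K} (the $\cO(h^\gamma)$ control) is used; the contributions of {\bf H8} and of the $\Gamma$-moment bound of Assumption~\ref{assu:Gammabound} require only that the relevant exponential-type growth functions have all polynomial-in-expectation moments bounded, which is supplied by hypothesis rather than proved here.
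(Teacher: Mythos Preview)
Your proposal is correct and follows essentially the same route as the paper: split~\eqref{eq:etaepv} into the initial term (handled by {\bf H7}), the drift difference further split into the $b_\ep-b$ part (handled by {\bf H8} and~\eqref{eq:Xibound}) and the Lipschitz part $b(\cdot,\Theta^{\ep,v})-b(\cdot,\overbar X)$ (handled by {\bf H5}), the control term and the stochastic integral (both handled via {\bf H3}/Assumption~\ref{assu:Gammabound} and the moment bound~\eqref{eq:boundv} on $\Theta^{\ep,v}$), and close with the Volterra Gr\"onwall lemma. One small simplification relative to your plan: the ``concave exponent $1/p$'' issue you anticipate does not actually arise, and the detour through $g(t)=\sup_{s\le t}(\cdots)$ is unnecessary. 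The paper estimates $\bE\big\lvert\int_0^t K(t-s)C_b\abs{\eta^{\ep,v}_s}\ds\big\lvert^p$ directly by the same H\"older--Jensen trick used in~\eqref{eq:Holdertrickb}, obtaining a bound of the form $C\int_0^t\abs{K(t-s)}^2\bE[\abs{\eta^{\ep,v}_s}^p]\ds$, so the Gr\"onwall inequality is linear in $t\mapsto\bE[\abs{\eta^{\ep,v}_t}^p]$ from the outset and Lemma~\ref{lemma:gronwallK} applies immediately.
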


%%%%%%%%%%%%%%%%%%%%%%%%%%%%%%%%%%%%%%%%%%%%%%

%%%%%%%%%%%%%%%%%%%%%%%%%%%%%%%%%%%

\begin{lemma}[MDP tightness]
\label{lemma:tightMDP}
Let~$p> 2 \vee 2/\gamma$,~$N>0$ and a family~$\{v^\ep\}_{\ep>0}$ in~$\cA_N$.
Under {\bf H2 - H5, H7, H8}, Assumptions~\ref{assu:K} and~\ref{assu:Gammabound}, $\eta^{\ep,v^\ep}$ admits a version which is H\"older continuous on~$\bT$ of any order~$\alpha < \gamma/2 -1/p$, uniformly for all~$\ep>0$.
Denoting again this version by~$\eta^{\ep,v^\ep}$, one has for all~$\ep>0$ small enough,
\begin{equation}
\label{eq:holderMDP}
    \bE \left[ \left( \sup_{0\le s<t\le T} \frac{\big\lvert \eta_t^{\ep,v^\ep}-\eta_s^{\ep,v^\ep}\big\lvert}{\abs{t-s}^\alpha} \right)^p \right]
    \le \widehat C ,
\end{equation}
for all~$\alpha\in [0, \gamma/2 -1/p)$, where~$\widehat C$ is a constant independent of~$\ep,v^\ep,s,t$.
Moreover, the family~$\{ \eta^{\ep,v^{\ep}} \}_{\ep>0}$ is tight in $\cW^d$.
\end{lemma}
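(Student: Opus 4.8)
The plan is to reproduce the Kolmogorov-continuity argument already used for Lemma~\ref{lemma:tightLDP}: obtain a bound $\bE\big[\abs{\eta^{\ep,v^\ep}_t-\eta^{\ep,v^\ep}_s}^p\big]\le C\abs{t-s}^{\gamma p/2}$ with $C$ independent of $\ep,v^\ep,s,t$, then invoke the quantitative Kolmogorov criterion (Garsia--Rodemich--Rumsey) to produce a continuous version whose H\"older seminorm of order $\alpha<\gamma/2-1/p$ has a $p$-th moment bounded uniformly in $\ep$, which is exactly~\eqref{eq:holderMDP}. Since $p>2\vee 2/\gamma$ forces $\gamma p/2>1$, this exponent is admissible. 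Tightness of $\{\eta^{\ep,v^\ep}\}_{\ep>0}$ in $\cW^d$ then follows from this uniform moment bound on the H\"older seminorm together with the fact that $\eta^{\ep,v^\ep}_0=(X^\ep_0-x_0)/(\vartheta_\ep h_\ep)\to0$ by {\bf H7}, via Markov's inequality and Arzel\`a--Ascoli, exactly as in the LDP case.

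The key preliminary observation is that the auxiliary process $\Theta^{\ep,v}$ appearing in~\eqref{eq:etaepv} coincides with the controlled LDP process of~\eqref{eq:Xepv} run with control $\vartheta_\ep h_\ep v$; as $\int_0^T\abs{\vartheta_\ep h_\ep v_s}^2\ds=(\vartheta_\ep h_\ep)^2\int_0^T\abs{v_s}^2\ds\le N$ for $\ep$ small, this control still lies in $\cA_N$, so Lemmas~\ref{lemma:boundv} and~\ref{lemma:tightLDP} apply to $\Theta^{\ep,v}$ and furnish uniform moment bounds and uniform H\"older control for it, while Lemma~\ref{lemma:boundeta} does the same for $\eta^{\ep,v}$. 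From~\eqref{eq:etaepv} the increment $\eta^{\ep,v}_t-\eta^{\ep,v}_s$ (the constant initial term cancels) splits, for each of the three integrands $u\mapsto\frac{b_\ep(u,\Theta^{\ep,v}_u)-b(u,\overbar{X}_u)}{\vartheta_\ep h_\ep}$, $u\mapsto\sigma_\ep(u,\Theta^{\ep,v}_u)v_u$, and $u\mapsto\frac1{h_\ep}\sigma_\ep(u,\Theta^{\ep,v}_u)$ (the last one integrated against $\D W$), into a part over $[0,s]$ carrying $K(t-u)-K(s-u)$ and a boundary part over $[s,t]$ carrying $K(t-u)$. For the $b$-integrand I would write $b_\ep(u,\Theta^{\ep,v}_u)-b(u,\overbar{X}_u)=\big[b_\ep(u,\Theta^{\ep,v}_u)-b(u,\Theta^{\ep,v}_u)\big]+\big[b(u,\overbar{X}_u+\vartheta_\ep h_\ep\eta^{\ep,v}_u)-b(u,\overbar{X}_u)\big]$; the first bracket is $\le\nu_\ep\Xi(\Theta^{\ep,v}_u)$ by {\bf H8} and $\nu_\ep/(\vartheta_\ep h_\ep)\to0$ with $\bE[\abs{\Xi(\Theta^{\ep,v}_u)}^p]$ controlled by~\eqref{eq:Xibound}, while the second bracket divided by $\vartheta_\ep h_\ep$ is $\le C\abs{\eta^{\ep,v}_u}$ by the Lipschitz property in {\bf H5}, hence has bounded $p$-th moment. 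For the $\sigma_\ep v$ term I would apply Cauchy--Schwarz in $u$ to extract $\big(\int_0^T\abs{v_u}^2\du\big)^{1/2}\le\sqrt N$, then use the growth of $\sigma_\ep$ from {\bf H3} (complemented by Assumption~\ref{assu:Gammabound}) together with the moment bound on $\Theta^{\ep,v}$; the stochastic term is treated by the BDG inequality in the same way, the factor $1/h_\ep\le1$ being harmless. Bounding $\int_0^T\abs{K(t-u)-K(s-u)}^2\du+\int_0^{t-s}\abs{K(u)}^2\du=\cO(\abs{t-s}^\gamma)$ by Assumption~\ref{assu:K} and collecting all pieces (each contributing $\cO(\abs{t-s}^{\gamma/2})$ to $\abs{\eta^{\ep,v}_t-\eta^{\ep,v}_s}$) yields the required estimate.

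The main obstacle I anticipate is the drift contribution of $b_\ep-b$: one has to check that the smallness rate $\nu_\ep(\vartheta_\ep h_\ep)^{-1}\to0$ from {\bf H8} genuinely dominates the (possibly exponential) growth of $\Xi$, which is precisely why~\eqref{eq:Xibound} is imposed on $\Theta^{\ep,v}$ rather than taken for granted; and, throughout, every constant must be shown to be independent of $v\in\cA_N$ and of small $\ep$, which is possible only because $\Theta^{\ep,v}$ reduces to an LDP-controlled process with control still in $\cA_N$, so that Lemmas~\ref{lemma:boundv} and~\ref{lemma:tightLDP}, and with them the a priori bounds underpinning the $\sigma_\ep(\cdot,\Theta^{\ep,v})$ and $b$-difference terms, are available with $v$- and $\ep$-uniform constants.
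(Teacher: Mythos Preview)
Your proposal is correct and follows essentially the same route as the paper: split the increment $\eta^{\ep,v^\ep}_t-\eta^{\ep,v^\ep}_s$ from~\eqref{eq:etaepv} into the kernel-difference part on $[0,s]$ and the boundary part on $[s,t]$, handle the drift via {\bf H8} for $b_\ep-b$ and the Lipschitz bound from {\bf H5} for $b(\cdot,\Theta)-b(\cdot,\overbar X)$, treat the $\sigma_\ep$ terms by Cauchy--Schwarz/BDG plus {\bf H3} and Assumption~\ref{assu:Gammabound}, then invoke Assumption~\ref{assu:K} and Kolmogorov to get~\eqref{eq:holderMDP} and tightness. Your explicit identification of $\Theta^{\ep,v}$ with the LDP controlled process $X^{\ep,\vartheta_\ep h_\ep v}$ (still in $\cA_N$ for small~$\ep$) is a clean way to import the uniform bounds from Lemmas~\ref{lemma:boundv} and~\ref{lemma:tightLDP}; the paper leaves this implicit but uses it in the same way.
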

We recall that~$\eta^{\ep,v^\ep} = \cT^\ep \big(W+ h_\ep   \int_0^\cdot v_s\ds\big)$ and~$\psi=\cT^0_v$, hence the lemma above deals with Theorem~\ref{thm:relax} {\em (i)}. The following one identifies the limit set as the unique solution to~\eqref{eq:MDPlimit}. It is thus more precise than in the LDP case, and justifies the form of the rate function~\eqref{eq:grfMDP}.
\begin{lemma}[MDP weak convergence]
\label{lemma:weakcvgMDP}
Let~$N>0$, a family~$\{v^\ep\}_{\ep>0}$ such that, for all~$\ep>0$,~$v^\ep\in\cA_N$ and~$v^\ep$ converges in distribution to~$v\in\cA_N$, and~$\psi$ the unique solution of~\eqref{eq:MDPlimit}. 
Under {\bf H2 - H8}, Assumptions~\ref{assu:K} and~\ref{assu:Gammabound},~$\eta^{\ep,v^{\ep}}$ converges in distribution to~$\psi$ as~$\ep$ goes to zero. 
\end{lemma}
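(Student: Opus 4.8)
The plan is to prove Lemma~\ref{lemma:weakcvgMDP} by following the same scheme as in the proof of Theorem~\ref{thm:LDP}, relying on the tightness established in Lemma~\ref{lemma:tightMDP} to extract a convergent subsequence and then identifying its limit via a bounded continuous test functional. First, by Lemma~\ref{lemma:tightMDP} the family $\{\eta^{\ep,v^\ep}\}_{\ep>0}$ is tight in $\cW^d$, and the controls $\{v^\ep\}_{\ep>0}$ live in the compact set $\cS_N$ under the weak $L^2$-topology, so the pair $\{(\eta^{\ep,v^\ep},v^\ep)\}_{\ep>0}$ is tight in $\cW^d\times\cS_N$. Along any subsequence we may therefore extract a further subsequence (relabelled $(\eta^n,v^n)$, with $\ep_n\downarrow 0$) converging weakly to some $(\eta^0,v)$ on a possibly enlarged probability space; by Skorokhod's representation theorem we may assume the convergence is almost sure. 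Since the MDP limiting equation~\eqref{eq:MDPlimit} is linear in $\psi$ and has a unique solution (cf.\ the remark after~\eqref{eq:MDPlimit}, using {\bf H5} and Assumption~\ref{assu:K}), it suffices to show $\eta^0=\cT^0_v=\psi$ almost surely, and uniqueness of the limit then upgrades subsequential convergence to full convergence in distribution.

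For the identification step I would, mimicking the LDP proof, introduce for $t\in\bT$ the bounded map $\Psi_t:\cS_N\times\cW^d\to\bR$ defined by
\[
\Psi_t(f,\omega):=\abs{\omega_t - \int_0^t K(t-s)\Big[\nabla b(s,\overbar X_s)\,\omega_s + \sigma(s,\overbar X_s)\,f_s\Big]\D s}\wedge 1,
\]
and show it is continuous on $\cS_N\times\cW^d$: the argument is identical to the one for $\Phi_t$ in the proof of Theorem~\ref{thm:LDP}, using that $\nabla b(\cdot,\overbar X)$ is bounded by {\bf H5}, that $\sigma(\cdot,\overbar X)$ is bounded, that $K(t-\cdot)\in L^2$ and $f^n\to f$ weakly in $L^2$ so the linear term converges, and that $\omega^n\to\omega$ uniformly. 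Hence $\bE[\Psi_t(v^n,\eta^n)]\to\bE^0[\Psi_t(v,\eta^0)]$, and it remains to prove the left-hand side tends to zero. From the equation~\eqref{eq:etaepv} for $\eta^{\ep,v}$, one bounds $\bE[\Psi_t(v^n,\eta^n)]$ by four terms: the initial-condition term $(\vartheta_{\ep_n}h_{\ep_n})^{-1}\abs{X^{\ep_n}_0-x_0}\to 0$ by {\bf H7}; the stochastic-integral term, which by BDG and the moment bound on $\Theta^{\ep,v}$ is $O(h_{\ep_n}^{-1})\to0$; the diffusion-drift term $\int_0^t K(t-s)\,\sigma_{\ep_n}(s,\Theta^{n}_s)\,v^n_s\D s$ which must converge to $\int_0^t K(t-s)\,\sigma(s,\overbar X_s)\,v_s\D s$; and the difference-quotient drift term $\int_0^t K(t-s)\,(\vartheta_{\ep_n}h_{\ep_n})^{-1}\big[b_{\ep_n}(s,\Theta^{n}_s)-b(s,\overbar X_s)\big]\D s$ which must converge to $\int_0^t K(t-s)\,\nabla b(s,\overbar X_s)\,\eta^0_s\D s$.

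The main obstacle is the last two convergences, and in particular the difference-quotient drift term. I would split $b_{\ep_n}(s,\Theta^{n}_s)-b(s,\overbar X_s) = \big[b_{\ep_n}(s,\Theta^{n}_s)-b(s,\Theta^{n}_s)\big] + \big[b(s,\Theta^{n}_s)-b(s,\overbar X_s)\big]$; the first bracket is controlled by {\bf H8}, which gives $\abs{b_{\ep_n}(s,\Theta^n_s)-b(s,\Theta^n_s)}\le\nu_{\ep_n}\Xi(\Theta^n_s)$, and since $\nu_{\ep_n}(\vartheta_{\ep_n}h_{\ep_n})^{-1}\to0$ and $\bE[\abs{\Xi(\Theta^n_s)}^p]$ is uniformly bounded by~\eqref{eq:Xibound}, this contribution vanishes in $L^1$ after integrating against $\abs{K(t-s)}$. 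For the second bracket one writes, using the $\cC^1$-regularity of $b(s,\cdot)$ from {\bf H5} and a first-order Taylor expansion with integral remainder, $(\vartheta_{\ep_n}h_{\ep_n})^{-1}\big[b(s,\Theta^n_s)-b(s,\overbar X_s)\big]=\big(\int_0^1\nabla b(s,\overbar X_s+\theta(\Theta^n_s-\overbar X_s))\D\theta\big)\,\eta^n_s$, so one needs to show $\Theta^n\to\overbar X$ (which follows since $\Theta^{\ep,v}$ converges weakly to $\overbar X$ because $\vartheta_\ep h_\ep\sigma_\ep\to0$, and can be strengthened to the needed mode of convergence via the uniform moment/Hölder bounds and Skorokhod), whence by uniform continuity of $\nabla b$ on compacts and the convergence $\eta^n\to\eta^0$ the integrand converges to $\nabla b(s,\overbar X_s)\,\eta^0_s$, with the interchange of limit and integral justified by a localisation argument on $\{\normbT{\eta^n}\vee\normbT{\Theta^n}>M\}$ exactly as in the proof of Theorem~\ref{thm:LDP}, using the uniform moment bounds~\eqref{eq:boundveta} and~\eqref{eq:boundv} together with Markov's inequality. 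The diffusion-drift term is handled analogously but more simply, splitting $\sigma_{\ep_n}(s,\Theta^n_s)-\sigma(s,\overbar X_s)$ into $\big[\sigma_{\ep_n}(s,\Theta^n_s)-\sigma(s,\Theta^n_s)\big]$ (vanishing on compacts by {\bf H2}) plus $\big[\sigma(s,\Theta^n_s)-\sigma(s,\overbar X_s)\big]$ (vanishing by the local $\delta$-Hölder continuity {\bf H6} together with $\Theta^n\to\overbar X$), and using the weak $L^2$-convergence $v^n\to v$ against the fixed $L^2$ kernel $K(t-\cdot)\sigma(\cdot,\overbar X_\cdot)$ to pass to the limit in the remaining linear term. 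Collecting these estimates gives $\lim_{n}\bE[\Psi_t(v^n,\eta^n)]=0$, hence $\bE^0[\Psi_t(v,\eta^0)]=0$ for every $t\in\bT$, so $\eta^0$ solves~\eqref{eq:MDPlimit} $\bP^0$-a.s.\ for each $t$ and, by path continuity, for all $t$ simultaneously; uniqueness of~\eqref{eq:MDPlimit} forces $\eta^0=\psi$, completing the proof.
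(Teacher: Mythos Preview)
Your approach is essentially the same as the paper's: extract a weakly convergent subsequence via tightness, introduce the bounded continuous functional $\Psi_t$, and show $\bE[\Psi_t(v^n,\eta^n)]\to 0$ by decomposing into initial, drift, control, and noise terms. One genuine difference: for the drift difference quotient you use the integral-remainder Taylor form $\int_0^1\nabla b(s,\overbar X_s+\theta(\Theta^n_s-\overbar X_s))\D\theta\cdot\eta^n_s$ and argue via continuity of $\nabla b$ plus localisation, whereas the paper writes $\nabla b(s,\overbar X_s)\eta^{\ep,v}_s + R^\ep(s)$ with the quadratic bound $\abs{R^\ep(s)}\le C_R\vartheta_\ep h_\ep\abs{\eta^{\ep,v}_s}^2$ and then uses~\eqref{eq:boundveta} directly. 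Your route is arguably cleaner under {\bf H5} as stated (only $\cC^1$ with $b$ Lipschitz), since the paper's quadratic remainder bound tacitly needs a Lipschitz gradient.

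One point to tidy up: after substituting~\eqref{eq:etaepv} into $\Psi_t(v^n,\eta^n)$, the terms you must drive to zero are \emph{differences} such as $\int_0^t K(t-s)\big[\sigma_{\ep_n}(s,\Theta^n_s)-\sigma(s,\overbar X_s)\big]v^n_s\,\D s$ and $\int_0^t K(t-s)\big[(\vartheta_{\ep_n}h_{\ep_n})^{-1}(b_{\ep_n}(s,\Theta^n_s)-b(s,\overbar X_s))-\nabla b(s,\overbar X_s)\eta^n_s\big]\D s$, with $v^n$ and $\eta^n$ throughout, not $v$ and $\eta^0$. The passage $(v^n,\eta^n)\to(v,\eta^0)$ is already encoded in the continuity of $\Psi_t$, so invoking the weak $L^2$-convergence $v^n\to v$ a second time for the ``remaining linear term'' is redundant; only the bound $\norm{v^n}_2\le\sqrt{N}$ is needed there. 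This is a framing issue rather than a gap: your ingredients ({\bf H6}--{\bf H8}, $\Theta^n\to\overbar X$, localisation, uniform moment bounds) are exactly what is required.
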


 Item {\em (ii)} is dealt with in the following lemma.

\begin{lemma}[MDP compactness]
\label{lemma:goodnessMDP}
Under {\bf H2, H3, H5}, Assumptions~\ref{assu:K} and~\ref{assu:Gammabound}, the functional~$\Lambda$ defined by~\eqref{eq:grfMDP} has compact level sets.
\end{lemma}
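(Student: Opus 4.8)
The plan is to follow the same strategy as in the proof of Lemma~\ref{lemma:goodnessLDP}, exploiting that the moderate-deviations rate function $\Lambda$ in~\eqref{eq:grfMDP} is defined through the linear Volterra equation~\eqref{eq:MDPlimit}, which makes everything genuinely easier than in the LDP case. Fix $M>0$ and let $\{\psi^n\}_{n\in\bN}$ be a sequence in the level set $\{\Lambda\le M\}$. For each $n$, pick a near-optimal control $v^n\in L^2$ with $\psi^n=\cT^0_{v^n}$ and $\tfrac12\int_0^T|v^n_s|^2\ds\le M+1/n\le M+1$; such controls exist since the infimum defining $\Lambda(\psi^n)$ is finite. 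Thus $\{v^n\}_{n\in\bN}\subset\cS_{2(M+1)}$, which is weakly compact in $L^2(\bT:\bR^m)$ by Banach-Alaoglu, so along a subsequence (not relabelled) $v^n\rightharpoonup v$ weakly in $L^2$, with $\tfrac12\int_0^T|v_s|^2\ds\le\liminf_n\tfrac12\int_0^T|v^n_s|^2\ds\le M$ by weak lower semicontinuity of the $L^2$-norm.

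Next I would show that the associated solutions $\psi^n=\cT^0_{v^n}$ converge (along a further subsequence) in $\cW^d$ to $\cT^0_v$. First, a uniform bound: by Assumption~\ref{assu:K} and {\bf H5} ($\normbT{\nabla b(\cdot,\overbar X)}<\infty$) together with {\bf H2} and the bound $\normbT{\overbar X}\le\overbar c_0$ from Remark~\ref{rem:boundphi} (so $\normbT{\sigma(\cdot,\overbar X)}<\infty$), a Grönwall-type argument for singular Volterra kernels — exactly as used to prove Lemma~\ref{lemma:boundeta} — yields $\sup_n\normbT{\psi^n}<\infty$. Then, as in Lemma~\ref{lemma:tightMDP}, the same estimates combined with the modulus-of-continuity control in Assumption~\ref{assu:K} give a uniform Hölder bound on $\psi^n$ of some order $\alpha<\gamma/2$, so by Arzelà-Ascoli $\{\psi^n\}$ is precompact in $\cW^d$; pass to a subsequence with $\psi^n\to\psi$ in $\cW^d$. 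It remains to identify $\psi=\cT^0_v$: writing~\eqref{eq:MDPlimit} for $\psi^n$ and letting $n\to\infty$, the drift term $\int_0^t K(t-s)\nabla b(s,\overbar X_s)\psi^n_s\ds$ converges because $\psi^n\to\psi$ uniformly and $K(t-\cdot)\in L^1$ (as $K\in L^2(\bT)$ on a bounded interval), while the term $\int_0^t K(t-s)\sigma(s,\overbar X_s)v^n_s\ds$ converges to $\int_0^t K(t-s)\sigma(s,\overbar X_s)v_s\ds$ because $s\mapsto \one_{[0,t]}(s)K(t-s)\sigma(s,\overbar X_s)$ lies in $L^2(\bT:\bR^{d\times m})$ and $v^n\rightharpoonup v$ weakly in $L^2$ — this is the same weak-convergence-against-an-$L^2$-integrand argument used in the proof of Theorem~\ref{thm:LDP} to show continuity of $\Phi_t$. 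Hence $\psi$ solves~\eqref{eq:MDPlimit} with control $v$, and by the uniqueness granted by linearity of~\eqref{eq:MDPlimit} (noted after its statement) we get $\psi=\cT^0_v$, so $\psi\in\{\Lambda\le M\}$ since $\Lambda(\psi)\le\tfrac12\int_0^T|v_s|^2\ds\le M$.

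This shows every sequence in $\{\Lambda\le M\}$ has a subsequence converging within $\{\Lambda\le M\}$, i.e.\ the level set is sequentially compact, hence compact in the metric space $\cW^d$. The only mildly delicate point — the main obstacle, such as it is — is making the passage to the limit in the $K(t-s)$-weighted integrals uniform in $t\in\bT$ so that the convergence is in $\cW^d$ rather than merely pointwise; this is handled exactly as in the LDP proof, by noting that $t\mapsto K(t-\cdot)$ is continuous into $L^2(\bT)$ by Assumption~\ref{assu:K} (the $\cO(h^\gamma)$ estimate), so that the family of integrands $\{\one_{[0,t]}(\cdot)K(t-\cdot)\sigma(\cdot,\overbar X_\cdot):t\in\bT\}$ is relatively compact in $L^2$ and the weak convergence $v^n\rightharpoonup v$ therefore acts uniformly over it. Everything else is routine Grönwall and Arzelà-Ascoli bookkeeping, lighter than in Lemma~\ref{lemma:goodnessLDP} because the limiting equation is linear.
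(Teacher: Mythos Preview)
Your argument is correct and is, in substance, a rerun of the proof of Lemma~\ref{lemma:goodnessLDP} specialised to the linear equation~\eqref{eq:MDPlimit}. The paper's own proof takes a shortcut you may have missed: since $t\mapsto\nabla b(t,\overbar X_t)$ and $t\mapsto\sigma(t,\overbar X_t)$ are uniformly bounded on~$\bT$ (by continuity and $\normbT{\overbar X}<\infty$), the MDP limiting equation~\eqref{eq:MDPlimit} is literally a special case of the LDP limiting equation~\eqref{eq:LDPlimit} with $x_0=0$, drift $(s,\psi)\mapsto\nabla b(s,\overbar X_s)\psi$ and diffusion $(s,\psi)\mapsto\sigma(s,\overbar X_s)$, both of which are continuous and of linear growth. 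Hence Lemma~\ref{lemma:goodnessLDP} applies verbatim, and the paper's proof is one sentence. Your route buys nothing extra here, but it is not wrong. One small remark: your worry about making the convergence of the $K(t-\cdot)$-weighted integrals uniform in~$t$ is unnecessary, since Arzel\`a--Ascoli already delivers $\psi^n\to\psi$ in~$\cW^d$, and you only need to identify the limit pointwise in~$t$ --- exactly as in Appendix~\ref{app:proof:lemma:goodnessLDP}.
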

\begin{proof}
Noticing that~$\nabla b(t,\overbar X_t)$ and~$\sigma(t,\overbar X_t)$ are uniformly bounded on~$\bT$ by continuity, this lemma boils down to a particular case of Lemma~\ref{lemma:goodnessLDP}.
\end{proof}

Theorem~\ref{thm:relax}(iii) is immediate by uniqueness of~\eqref{eq:MDPlimit}, therefore all the conditions are met and Theorem~\ref{thm:MDP} follows as a direct application of Theorem~\ref{thm:relax}.

%%%%%%%%%%%%%%%%%%%%%%%%%%%%%%%%%%%%%%%%%%%%%%%%%%

\subsection{Extension to non-convolution kernels}\label{sec:NonConv}
The analysis undertaken in this paper is based, both for notational convenience and with a view towards application, on convolution kernels. 
Different assumptions were studied in the literature, in particular Decreusefond~\cite{Decreusefond02} considered the properties of the map $f\mapsto \int_0^\cdot K(\cdot,s)f(s)\ds$ in order to include the fractional Brownian motion in his setting.

\subsubsection{Setting}
We call a kernel a map $K:\bT^2\to\bR$
for which both $\int_0^t K(t,s)^2 \D s$ and $K(t,s)$
are finite for all $t\in\bT$ and $s\neq t$.
The associated space is defined as
\begin{align*}
\cK:=\Big\{ u:\bT\to\bR, \,  \{\cF_t\}\text{-progressively measurable, such that }\bE\int_0^t \big[K(t,s)u(s)\big]^2 \D s <\infty, \text{ for all }t\in\bT \Big\}.
\end{align*}
Hence, for all $u\in\cK$ the stochastic integral 
\[
\widetilde{M}^K_t(u):=\int_0^t K(t,s)u(s)\D W_s
\]
is well defined for all $t\in\bT$ in the It\^o sense.
For any $\alpha\in(0,1)$, we denote the Riemann-Liouville integral~$\II^\alpha$ 
and derivative~$\DD^\alpha$  as
\begin{equation}
        (\II^\alpha f)(t):=\frac{1}{\Gamma(\alpha)} \int_0^t (t-s)^{\alpha-1} f(s)\D s, \qquad
        (\DD^{\alpha}f)(t):=\frac{\D}{\D t}(\II^{1-\alpha}f)(t),
        \quad \text{for } f\in L^1, t\in\bT.
    \label{eq:RiemmanLiouville}
\end{equation}
Define $\cI_{\alpha,p} :=\II^\alpha(L^p)$ equipped with the norm 
    $\norm{f}_{\cI_{\alpha,p}}:=\norm{D^{\alpha}f}_{L^p}$. 
    If $\alpha>\frac{1}{p}$, then $\cI_{\alpha,p}\subset \cC_0^{\alpha-\frac{1}{p}}$, the space of $(\alpha-\frac{1}{p})$-H\"older continuous functions null at time $0$.
Let~$\mathrm{K}$ denote the linear map associated to $K(t,s)$ by
    \begin{equation}
    \mathrm{K}f(t):=\int_0^t K(t,s)f(s)\D s,
    \label{eq:mapK}
    \end{equation}
and introduce, for $x\in(0,\infty)\setminus\{2\}$,
\begin{equation}\label{eq:theta}
    \theta(x):= \frac{2x}{2-x}.
\end{equation}

Given the space inclusions above, the following assumption implies precise
H\"older regularity for the integral~\eqref{eq:mapK}:
\begin{assumption}
There exist $\chi\in(1,2)$ and $\gamma>1/\theta(\chi)$ for which~$\mathrm{K}$ is continuous from~$L^2$ 
to~$\cI_{\gamma+\half,2}$ and from~$L^\chi$ to~$\cI_{\gamma,\theta(\chi)}$.
\label{ass:dec}
\end{assumption}

\begin{example}
The operators associated to the following kernels satisfy Assumption~\ref{ass:dec}:
\begin{enumerate}[$\bullet$]
    \item The Riemann-Liouville kernel
    \[
    J_H(t,s)=\frac{(t-s)^{H-\half}_+}{\Gamma(H+\half)}, \qquad \text{with }H\in (0,1),
    \]
    satisfies this assumption with $\gamma=H$ and any $\chi<2$~\cite[Theorem 4.1]{Decreusefond02}.
    \item The fractional Brownian motion kernel
    \[
    K_H(t,s) = \frac{(t-s)^{H-\half}_+}{\Gamma(H+\half)} F\left(H-\half, \half - H, H+\half, 1- \frac{t}{s}\right),
    \]
    where $F$ is the Gauss hypergeometric function, also satisfies this assumption with the same parameters as above~\cite[Theorem 4.2]{Decreusefond02}.
\end{enumerate}
\label{ex:FracKernels}
\end{example}
%%%%%%%%%%%%%%%%%%%%%%%%%%%%%%%%%%%%%%%%%%%%%%%%%%%%%%%%

%%%%%%%%%%%%%%%%%%%%%%%%%%%%%%%%%%%%%%%%%%%%%%%%%%%%%%%%%%%%%%%%%%%%%%%%%%%%%
Decreusefond's main result yields the H\"older regularity of the stochastic Volterra integral~\cite[Theorem 3.1]{Decreusefond02}:
\begin{theorem}
Let Assumption~\ref{ass:dec} hold and $u\in \cK \cap L^{\theta(\chi)}(\Omega\times\bT)$.
Then $\widetilde{M}^K(u)$ has a measurable version $M^K(u)$ which is $\alpha$-H\"older continuous for all $\alpha<\gamma-1/\theta(\chi)$.
\label{th:dec}
\end{theorem}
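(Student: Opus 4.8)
The plan is to adapt the Garsia--Rodemich--Rumsey (GRR) machinery to the stochastic convolution-type integral $\widetilde M^K_t(u) = \int_0^t K(t,s)u(s)\,\D W_s$, exploiting Assumption~\ref{ass:dec} to transfer regularity from the deterministic operator $\mathrm K$ to the stochastic one. First I would fix $p:=\theta(\chi)$ and observe that, since $u\in\cK\cap L^p(\Omega\times\bT)$, the process $\widetilde M^K(u)$ is well defined as a square-integrable martingale-type object for each fixed $t$; the point is to produce a modification with good path regularity, so the natural route is to estimate the increments $\bE\big[\,\big\lvert \widetilde M^K_t(u)-\widetilde M^K_s(u)\big\rvert^{p}\big]$ and then invoke Kolmogorov's continuity criterion (or directly a GRR-type lemma) to obtain $\alpha$-H\"older continuity for $\alpha<\gamma-1/\theta(\chi)$.

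The key step is the increment estimate. Writing, for $s<t$,
\[
\widetilde M^K_t(u)-\widetilde M^K_s(u) = \int_0^s \big(K(t,r)-K(s,r)\big)u(r)\,\D W_r + \int_s^t K(t,r)u(r)\,\D W_r,
\]
I would apply the Burkholder--Davis--Gundy inequality to each term, obtaining a bound in terms of $\bE\big[\big(\int_0^s (K(t,r)-K(s,r))^2 u(r)^2\,\D r\big)^{p/2}\big]$ and the analogous quantity on $[s,t]$. By the Minkowski integral inequality (moving the $L^{p/2}(\Omega)$ norm inside the $\D r$-integral, using $p\ge2$) these are controlled by $\big(\int_0^T |K_{t,s}(r)|^{\theta(\chi)}\,\D r\big)$-type quantities weighted against $\|u\|_{L^p(\Omega\times\bT)}$, where $K_{t,s}(r):=(K(t,r)-K(s,r))\one_{r\le s}+K(t,r)\one_{s<r\le t}$ is exactly the kernel of $\mathrm K(\,\cdot\,)(t)-\mathrm K(\,\cdot\,)(s)$ evaluated appropriately. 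The continuity of $\mathrm K:L^\chi\to\cI_{\gamma,\theta(\chi)}$ from Assumption~\ref{ass:dec}, combined with the embedding $\cI_{\gamma,\theta(\chi)}\subset\cC_0^{\gamma-1/\theta(\chi)}$, then yields $\big|\mathrm K f(t)-\mathrm K f(s)\big|\lesssim \|f\|_{L^\chi}|t-s|^{\gamma-1/\theta(\chi)}$; the analogous bound at the level of the $L^{\theta(\chi)}(\bT)$-norm of the kernel increment produces a factor $|t-s|^{p(\gamma-1/\theta(\chi))}$ in the moment estimate, which is precisely what Kolmogorov's criterion needs to deliver H\"older exponent up to (but not including) $\gamma-1/\theta(\chi)$.

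The main obstacle I anticipate is making the duality between the deterministic operator bound and the stochastic moment bound fully rigorous: Assumption~\ref{ass:dec} is phrased as continuity of the map $f\mapsto \mathrm K f$ acting on $L^\chi$-functions, whereas after BDG one is left with an $L^{\theta(\chi)}(\bT)$-norm of the \emph{kernel slice} $r\mapsto K_{t,s}(r)$, not an operator norm. Reconciling these requires either (a) extracting from the operator continuity a pointwise (in $t$) H\"older-in-$t$ bound on $r\mapsto \big(\int |K_{t,s}(r)|^{\theta(\chi)}\D r\big)^{1/\theta(\chi)}$ — essentially testing $\mathrm K$ against a cleverly chosen $L^\chi$ function, or using that $\gamma>1/\theta(\chi)$ forces the requisite integrability — or (b) following Decreusefond's original argument verbatim, which phrases everything through the fractional-calculus representation $\widetilde M^K(u) = \II^\gamma\big(\text{something in }L^{\theta(\chi)}\big)$ and then uses $\II^\gamma(L^{\theta(\chi)})\subset\cC_0^{\gamma-1/\theta(\chi)}$. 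Since the statement is quoted as \cite[Theorem 3.1]{Decreusefond02}, the cleanest write-up is simply to cite it; were a self-contained proof required, I would take route (b), as the fractional-integral representation is where Assumption~\ref{ass:dec} was tailored to apply.
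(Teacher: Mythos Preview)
The paper does not prove this theorem at all: it is stated as Decreusefond's result and simply cited as \cite[Theorem 3.1]{Decreusefond02}, so there is no in-paper proof to compare against. You correctly identify this at the end of your proposal, and your sketched BDG-plus-Kolmogorov route is a reasonable heuristic for how such a result is established, with your option (b) --- the fractional-integral representation --- being essentially Decreusefond's actual argument. Since the paper treats this as a black box, citing it is exactly what is required here.
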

From now on, we only consider the measurable version of the stochastic integral. 
Although this theorem was proved in a one-dimensional setting, it also covers multi-dimensional stochastic Volterra integrals by considering their components individually and summing them.

%%%%%%%%%%%%%%%%%%%%%%%%%%%%%%%%%%%%%%%%%%%
\subsubsection{Large and moderate deviations}
For each $\ep>0$ consider the stochastic Volterra equation
\begin{equation}
    X^\ep_t = X_0^\ep + \int_0^t K(t,s) b_\ep(s,X_s^\ep) \ds + \vartheta_\ep \int_0^t K(t,s) \sigma_\ep(s,X^\ep_s) \D W_s, \quad t\in\bT,
    \label{eq:SVEdec}
\end{equation}
which was studied in~\cite{CD00} without the $\ep$-dependence, and where the coefficients live in the same spaces as those from~\eqref{eq:Xep}. To complete the non-convolution setup we also need the following condition.
\begin{assumption}
\label{assu:kernelLr}
There exists $q>2$ such that 
\begin{equation}
\sup_{t\in\bT} \bigg\{ \int_0^t \abs{K(t,s)}^{-\theta(q)} \ds \bigg\} <\infty,
\label{eq:kernelLr}
\end{equation}
with~$\theta(q)$ introduced in~\eqref{eq:theta}.
\end{assumption}
Let $p>q>2$, then $2<-\theta(p)<-\theta(q)$ and H\"older's and Jensen's inequalities yield
\begin{align*}
 \left[ \int_0^t \abs{ K(t,s) f(s) }^2 \ds \right]^\frac{p}{2} 
   & \le \left[ \int_0^t \abs{K(t,s)}^{-\theta(p)} \ds \right]^\frac{p-2}{2}
    \int_0^t \abs{f(s)}^{p} \ds \\
   & \le t^\frac{p-q}{q} \left[\int_0^t \abs{K(t,s)}^{-\theta(q)} \ds \right]^\frac{p}{-\theta(q)}
    \int_0^t \abs{f(s)}^{p} \ds.
\end{align*}
This replaces the Gronwall-type inequality derived for convolution kernels in Lemma~\ref{lemma:gronwallK}.
Hence replacing Assumption~\ref{assu:K} by the condition~\eqref{eq:kernelLr} one recovers the moments bounds of Lemmata~\ref{lemma:boundv} and~\ref{lemma:boundeta} for the processes $\{X^{\ep,v}\}_{\ep>0}$ and~$\{\eta^{\ep,v}\}_{\ep>0}$ and for any $p\ge1$.
Setting in particular $p=\theta(\chi)$ from Theorem~\ref{th:dec}, then for any $v\in\cA_N,\,N>0$ and $\xi\in\{b_\ep,\sigma_\ep\}$ we have $\xi(X^{\ep,v})\in \cK \cap L^p(\Omega\times\bT)$ thanks to the growth conditions \textbf{H3}. Therefore Assumption~\ref{ass:dec}, Theorem~\ref{th:dec} and Assumption~\ref{assu:kernelLr} yield the almost sure H\"older regularity of the following processes defined on~$\bT$:
\[
\int_0^\cdot K(\cdot,s) \xi(X^{\ep,v}_s) \ds \qquad \text{and} \qquad \int_0^\cdot K(\cdot,s) \xi(X^{\ep,v}_s)\, \D W_s.
\]
Hence we recover the tightness of Lemmata~\ref{lemma:tightLDP} and~\ref{lemma:tightMDP} under this new set of assumptions. Notice that we can consider kernels consisting of both convolution and non-convolution components.
Finally we can extend the LDP and MDP results without further modifications:
\begin{theorem}
\label{thm:nonconv}
The conclusions of Theorems~\ref{thm:LDP} and~\ref{thm:MDP} stand for~$\{X^\ep\}_{\ep>0}$ defined by~\eqref{eq:SVEdec} when each component of the kernel~$K$ satisfies either Assumption~\ref{assu:K} or Assumptions~\ref{ass:dec} and~\ref{assu:kernelLr}.
\end{theorem}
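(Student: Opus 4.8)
The strategy is to show that every ingredient used in the proofs of Theorems~\ref{thm:LDP} and~\ref{thm:MDP} — namely the moment bounds (Lemmata~\ref{lemma:boundv}, \ref{lemma:boundeta}), the uniform H\"older/tightness estimates (Lemmata~\ref{lemma:tightLDP}, \ref{lemma:tightMDP}), the weak-convergence identification of the limit (the proof of Theorem~\ref{thm:LDP} and Lemma~\ref{lemma:weakcvgMDP}), and the compactness of the level sets (Lemmata~\ref{lemma:goodnessLDP}, \ref{lemma:goodnessMDP}) — continues to hold verbatim, with Assumption~\ref{assu:K} replaced by Assumptions~\ref{ass:dec} and~\ref{assu:kernelLr}. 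Since the abstract device Theorem~\ref{thm:relax} makes no reference to the convolution structure of the kernel, once these four families of estimates are re-established the conclusions follow immediately. The plan is therefore to audit each place where Assumption~\ref{assu:K} was invoked and supply the non-convolution replacement; the excerpt already indicates where each replacement comes from.

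First I would treat the moment bounds. In the convolution case these rest on the Gronwall-type inequality of Lemma~\ref{lemma:gronwallK}; as observed in the paragraph following Assumption~\ref{assu:kernelLr}, the H\"older--Jensen estimate
\[
\left[ \int_0^t \abs{ K(t,s) f(s) }^2 \ds \right]^{p/2}
\le t^{\frac{p-q}{q}} \left[\int_0^t \abs{K(t,s)}^{-\theta(q)} \ds \right]^{\frac{p}{-\theta(q)}}
\int_0^t \abs{f(s)}^{p} \ds
\]
together with \eqref{eq:kernelLr} furnishes exactly the bound needed to run the fixed-point/Gronwall argument, now for every $p\ge1$ rather than $p\ge 2$. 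Feeding this into the proofs of Lemmata~\ref{lemma:boundv} and~\ref{lemma:boundeta} (which otherwise only use the growth conditions \textbf{H3}, Assumption~\ref{assu:Gammabound}, and BDG) yields \eqref{eq:boundv} and \eqref{eq:boundveta} unchanged. Next, for tightness, I would invoke Decreusefond's Theorem~\ref{th:dec}: setting $p=\theta(\chi)$, the growth conditions \textbf{H3} and the just-established moment bounds give $\xi(X^{\ep,v})\in\cK\cap L^{p}(\Omega\times\bT)$ for $\xi\in\{b_\ep,\sigma_\ep\}$, so both $\int_0^\cdot K(\cdot,s)\xi(X^{\ep,v}_s)\ds$ and $\int_0^\cdot K(\cdot,s)\xi(X^{\ep,v}_s)\D W_s$ are a.s.\ $\alpha$-H\"older on $\bT$ for $\alpha<\gamma-1/\theta(\chi)$; a uniform-in-$\ep$ version of this estimate (obtained by tracking the constants in Decreusefond's proof, which depend on the coefficients only through their $L^p$-norms, already bounded) replaces \eqref{eq:holderLDP}--\eqref{eq:holderMDP} and yields tightness of $\{X^{\ep,v^\ep}\}$ and $\{\eta^{\ep,v^\ep}\}$ in $\cW^d$ exactly as in Lemmata~\ref{lemma:tightLDP} and~\ref{lemma:tightMDP}.

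With moment bounds and tightness in hand, the weak-convergence step is essentially copy-paste: one defines the analogue of $\Phi_t$ with $K(t,s)$ in place of $K(t-s)$, notes that continuity of $\Phi_t$ in $(f,\omega)$ uses only $K(t,\cdot)\in L^2$ (part of Assumption~\ref{ass:dec}, since $\mathrm{K}:L^2\to\cI_{\gamma+1/2,2}$ in particular requires $K(t,\cdot)\in L^2$) and the uniform continuity of the limiting coefficients from \textbf{H2}, and then the same localisation argument (using \eqref{eq:boundv} and the uniform H\"older bound to control $\sup_n\bP(A_n^M)$) forces $\bE[\Phi_t(v^n,X^n)]\to0$. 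The limit points are thus solutions of \eqref{eq:LDPlimit} (resp.\ \eqref{eq:MDPlimit}) with $K(t,s)$ replacing $K(t-s)$, i.e.\ they lie in $\cG^0_v$ (resp.\ equal $\cT^0_v$). Compactness of level sets (Lemmata~\ref{lemma:goodnessLDP}, \ref{lemma:goodnessMDP}) transfers the same way, as those proofs also only use the growth conditions and the mapping property of the kernel on $L^2$. Applying Theorem~\ref{thm:relax} then gives both the LDP for $\{X^\ep\}$ and the LDP for $\{\eta^\ep\}$, hence the MDP. Finally, since the whole argument is component-wise and additive in the kernel, a kernel with some components of convolution type (satisfying Assumption~\ref{assu:K}) and others satisfying Assumptions~\ref{ass:dec}--\ref{assu:kernelLr} is handled by combining the two sets of estimates coordinate by coordinate.

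The main obstacle, I expect, is making the H\"older bound from Theorem~\ref{th:dec} \emph{uniform in $\ep$ and in the control $v\in\cA_N$}: Decreusefond's theorem as quoted is a qualitative a.s.\ statement, whereas Lemmata~\ref{lemma:tightLDP}--\ref{lemma:tightMDP} need a quantitative $L^p$-bound on the H\"older seminorm with a constant independent of $\ep,v$. One has to revisit the proof of \cite[Theorem 3.1]{Decreusefond02} (a Kolmogorov-type argument via the factorisation of the kernel through $\cI_{\gamma,\theta(\chi)}$ granted by Assumption~\ref{ass:dec}) and check that the resulting constant depends on the integrand $u=\xi(X^{\ep,v})$ only through $\bE\int_0^T|u_s|^{\theta(\chi)}\ds$, which the moment bounds of Lemma~\ref{lemma:boundv} (now valid for all $p\ge1$) control uniformly in $\ep,v$. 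Everything else is a routine transcription of the convolution proofs.
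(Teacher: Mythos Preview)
Your proposal is correct and follows essentially the same route as the paper: replace Lemma~\ref{lemma:gronwallK} by the H\"older--Jensen estimate under Assumption~\ref{assu:kernelLr} to recover the moment bounds, then use Assumption~\ref{ass:dec} and Theorem~\ref{th:dec} to recover H\"older regularity and tightness, after which the paper simply declares that the remaining steps ``extend without further modifications''. You are in fact more thorough than the paper on two points: you spell out that the weak-convergence and level-set arguments only need $K(t,\cdot)\in L^2$, and you flag the uniformity-in-$(\ep,v)$ of the H\"older constant from Theorem~\ref{th:dec} as the genuine technical check, which the paper leaves implicit.
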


%%%%%%%%%%%%%%%%%%%%%%%%%%%%%%%%%%%%%%%%%%%%%%%%%%%%%%%%%%%%%%%%%%%%%%%%%%%%%%%%%%%%%%%%%%%%%%%%%%%%%%%%%%%%%%%%%%%%%%%%%%%%%%%%%%%%%%%%%%%%%%%%%%%%%%%%%%%%%%%%%%%%%%%
%%%%%%%%%%%%%%%%%%%%%%%%%%%%%%%%%%%%%%%%%%%%%%%%%%%%%%%%%%%%%%%%%%%%%%%%%%%%%%%%%%%%%%

\section{Application to rough volatility}
\label{section:rvol}
We now show how our results (Theorems~\ref{thm:LDP},~\ref{thm:MDP} and~\ref{thm:nonconv}) 
apply to a large class of models recently developed in mathematical finance.
Originally proposed by Comte and Renault~\cite{Comte98} with financial econometrics applications in mind, 
rough volatility models were rediscovered later in the context of option pricing in~\cite{Alos07, BFG16, Fukasawa11, GJR14}, 
developed and extended widely,
and have now become the new standards of volatility modelling.
They usually take the following form:
\begin{equation}
\left\{
\begin{array}{rl}
    X_t &= \displaystyle - \half\int_0^t \Sigma(Y_s) \ds + \int_0^t \sqrt{\Sigma(Y_s)} \D B_s, \\
    Y_t &= \displaystyle y_0 + \int_0^t K_1(t-s) \bb(Y_s)\ds + \int_0^t K_2(t-s)\zeta(Y_s) \D W_s,
\end{array}
\right.
\label{eq:rvolmodel}
\end{equation}
where both~$X$ and~$Y$ are one-dimensional, $K_1,K_2\in L^2(\bT:\bR_+)$
and~$B$ and~$W$ are two standard Brownian motions with $\D\langle B,W\rangle_t =\rho \dt$, 
for some correlation parameter $\rho\in (-1,1)$.
We further define $\rrho:=\sqrt{1-\rho^2}$, and set $X_0=0$ without loss of generality.
Here~$X$ denotes the logarithm of a stock price process, and~$\sqrt{\Sigma(Y)}$ its instantaneous volatility.
We adopt a slight abuse of notation, as~$X$ previously denoted the multidimensional system,
but writing now~$X$ as the log-stock price is consistent with the mathematical finance literature and should not create any confusion.
We summarise in Table~\ref{tab:Summary} the most common rough volatility models 
used in mathematical finance, indicating where their asymptotic behaviours were covered, 
and where our framework not only encompasses those, but fills the gaps so far missing. As discussed below, our application to the rough Heston model is conditional on the latter to have a unique pathwise solution, a problem that remains open so far.
The detailed analysis of these cases is then provided in Section~\ref{sec:SmallTimeEx} 
in the small-time case, and in Section~\ref{sec:TailsEx} for their tail behaviours.

\begin{table}[ht!]
\begin{center}
 \begin{tabular}{|c|c|c|c|c|c|c|c|c|c|c|c|}
 \hline
 \multirow{4}{4em}{{\bf Models}} &
 \multicolumn{4}{c|}{} &  \multicolumn{2}{c|}{{\bf Rough}} &
 \multicolumn{2}{c|}{{\bf multi-factor}} &\multicolumn{3}{c|}{} 
 \\
 & \multicolumn{4}{c|}{{\bf Rough Stein-Stein}} & \multicolumn{2}{c|}{{\bf Bergomi}} &
 \multicolumn{2}{c|}{{\bf rough Bergomi}} &\multicolumn{3}{c|}{{\bf Rough Heston}} 
 \\
 & \multicolumn{2}{c}{Small-time} & \multicolumn{2}{c|}{Tail} & \multicolumn{2}{c|}{Small-time} & \multicolumn{2}{c|}{Small-time} & \multicolumn{2}{c}{\makecell{Small-time \\ \cite{FGS19}}} & \multicolumn{1}{c|}{Tail}    \\ 
 & \makecell{LDP \\ \cite{HJL19}} & MDP & \makecell{LDP \\ \cite{HJL19}} & MDP & \makecell{LDP \\ \cite{JPS18}}& MDP & \makecell{LDP \\ \cite{LMS19}} & MDP & LDP & MDP & LDP \\
  \hline
~$(X^\ep,Y^\ep)$ & \cellcolor{lightgray}\ocean{IF} & \cellcolor{lightgray}\ocean{IF}  & \cellcolor{lightgray}\ocean{IF} & -
 & \cellcolor{lightgray}\ocean{IF} & \cellcolor{lightgray}\ocean{IF}  & \cellcolor{lightgray}\ocean{IF} & \cellcolor{lightgray}\ocean{IF} 
 & \cellcolor{lightgray}\ocean{IF} & \cellcolor{lightgray}\ocean{IF} & \cellcolor{lightgray}\ocean{IF}  \\
 \hline
~$X^\ep$ & \sunset{OP} & \cellcolor{lightgray}\ocean{IF}   & \sunset{OP} & -
 & \sunset{OP} & \cellcolor{lightgray}\ocean{IF} & \cellcolor{lightgray}\sunset{OP} & \cellcolor{lightgray}\sunset{OP} 
 & \cellcolor{lightgray}\sunset{OP} & \cellcolor{lightgray}\ocean{IF} & \cellcolor{lightgray} \sunset{OP} \\
 \hline 
~$X_\ep$ & \sunset{OP} & \cellcolor{lightgray}\forest{CF}   & \sunset{OP} & - 
 & \sunset{OP} & \cellcolor{lightgray}\forest{CF}   &\cellcolor{lightgray}\sunset{OP} & \cellcolor{lightgray}\sunset{OP}
 & \sunset{OP} & \forest{CF} &  \cellcolor{lightgray}\sunset{OP} \\
 \hline
~$\widehat{\sigma}$ & \sunset{OP} & \cellcolor{lightgray}\forest{CF}   & \sunset{OP} & -
 & \sunset{OP} & \cellcolor{lightgray}\forest{CF}  & \cellcolor{lightgray}\sunset{OP} & \cellcolor{lightgray}\sunset{OP}
 & \sunset{OP} & \forest{CF} & \cellcolor{lightgray}\sunset{OP} \\
 \hline
	~$Y^\ep$ & \ocean{IF} & \cellcolor{lightgray}\ocean{IF}  & \ocean{IF} & \cellcolor{lightgray}\ocean{IF}
 & \ocean{IF} & \cellcolor{lightgray}\ocean{IF}  & \cellcolor{lightgray} \ocean{IF} & \cellcolor{lightgray}\ocean{IF} 
 & \cellcolor{lightgray}\ocean{IF} & \cellcolor{lightgray}\ocean{IF} & \cellcolor{lightgray}\ocean{IF}  \\
 \hline
~$Y_\ep$ & \forest{CF} & \cellcolor{lightgray}\forest{CF}  & \ocean{IF} & \cellcolor{lightgray}\ocean{IF}
 & \forest{CF} & \cellcolor{lightgray}\forest{CF}  & \forest{CF} & \cellcolor{lightgray}\forest{CF} 
 & \cellcolor{lightgray}\ocean{IF} & \cellcolor{lightgray}\forest{CF} &   \cellcolor{lightgray}\ocean{IF} \\
 \hline
\end{tabular}
\end{center}
\caption{Summary of rough volatility results and form of the rate functions (CF=closed-form; IF=integral form; OP=optimisation problem; shadowed cells are new contributions from this paper).
$\widehat{\sigma}$ corresponds to the implied volatility, defined precisely in Section~\ref{sec:IVsmalltime}.}
\label{tab:Summary}
\end{table}

%%%%%%%%%%%%%%%%%%%%%%%%%%%%%%%%%%%%%%%%%%%%%%%%%%%%%%%%%%%%%%%%%%%%%%%%%%%%%%%%%%%%%%%%%%%%%%%%%%%%%%

\subsection{Small-time rescaling (general)}\label{sec:SmallTimeGen}
In the small-time case, we need to assume some scaling behaviour for the kernel functions.
We say that a function $f:\bR\to\bR$ is homogeneous of degree~$\alpha\in\bR$ if 
$f( \lambda x)=\lambda^\alpha f(x)$ holds for all~$x,\lambda\in\bR$. 
\begin{assumption}\label{assu:kernelrvol}
$K_1$ and~$K_2$ are homogeneous of degrees~$\varpi\in (-\half, \half]$
and $H-\half \in(-\half, \half]$. 
\end{assumption}
Since~$K_1$ is homogeneous of degree~$\varpi$, then
\begin{equation*}
\begin{array}{rll}
\displaystyle \int_0^h K_1(t)^2 \dt
 & = \displaystyle \int_0^h K_1(1)^2 t^{2\varpi}\dt \le \frac{K_1(1)^2}{1+2\varpi} h^{1+2\varpi},
& \text{for any } h>0,\\
\displaystyle \int_0^T \big(K_1(t+h)-K_1(t)\big)^2\dt
 & = \displaystyle K_1(1)^2 \int_0^T \big((t+h)^\varpi - t^\varpi \big)^2 \dt
= \cO\left(h^{1+2\varpi}\right), & \text{for } h \text{ small enough},
\end{array}
\end{equation*}
and so Assumption~\ref{assu:K} is satisfied with $\gamma = 1+2\varpi \in (0,2]$,
and likewise for~$K_2$ with $\gamma= 2H$.
Under this assumption, the rescalings~$ X^\ep_t:= \ep^{H-\half} X_{\ep t}$ and $Y^\ep_t:= Y_{\ep t}$ turn~\eqref{eq:rvolmodel} into
\begin{align}
\label{eq:smalltime}
\left\{
\begin{array}{rl}
    X^\ep_t &= \displaystyle - \frac{\ep^{H+\half}}{2}\int_0^t \Sigma(Y^\ep_s) \ds + \ep^H \int_0^t \sqrt{\Sigma(Y^\ep_s)} \D B_s, \\
    Y^\ep_t &= \displaystyle y_0 + \ep^{1+\varpi} \int_0^t K_1(t-s) \bb(Y^\ep_s)\ds + \ep^H \int_0^t K_2(t-s) \zeta(Y^\ep_s) \D W_s,
\end{array}
\right.
\end{align}
so that we are precisely in the framework of~\eqref{eq:Xep} with $d=3$,  $\vartheta_\ep=\ep^H$,
\[
K(t) = \begin{pmatrix}
1 & 0 & 0\\
0 & K_1(t) & K_2(t) \\
0 & 0 & 0
\end{pmatrix}, \quad
b_{\ep}(t, (x,y)) 
= \begin{pmatrix}
 - \half\ep^{H+\half}\Sigma(y)\\
  \ep^{1+\varpi}\bb(y) \\
  0
\end{pmatrix},\quad
\sigma_{\ep}(t, (x,y)) = \begin{pmatrix}
\rho \sqrt{\Sigma(y)} & \rrho \sqrt{\Sigma(y)} & 0\\
0 & 0 & 0 \\
\zeta(y) & 0 & 0
\end{pmatrix},
\]
where, similarly to Example~\ref{ex:Bergomi}, the additional dimension allows to handle the two different kernels. Note that~$\sigma_\ep$ does not depend on~$\ep$ but encodes the correlation.
The controlled equation~\eqref{eq:Xepv} for the second component reads
\[
Y^{\ep,v}_t = y_0 + \ep^{1+\varpi} \int_0^t K_1(t-s) \bb(Y^{\ep,v}_s)\ds + \ep^H \int_0^t K_2(t-s) \zeta(Y^{\ep,v}_s) \D W_s + \int_0^t  K_2(t-s) \zeta(Y^{\ep,v}_s) v_s \ds,
\]
for each $t\in\bT,\ep>0$ and $v\in\cA$. 
Note that the dynamics of~$X^\ep$ do not feed back into~$Y^\ep$
 and that $\Sigma\in\cS^{\abs{\Sigma}}_{\{2\}}$ in the sense of Definition~\ref{def:autonFunction}.
The following assumption stands throughout this section:
\begin{assumption}[Small-time assumptions]\label{assu:rvolLDPSmallTime}\ 
\begin{itemize}
\item $K_2(t):= t^{H-\half}/\Gamma(H+\half)$;
\item $\Sigma$, $\zeta$ and~$\bb$ are continuous on~$\bR$;
\item $\bb$ and~$\zeta$ are of linear growth;
\item $\Sigma$ is either of linear growth or such that for all $p\ge1$, $N>0$ and~$\ep>0$ small enough,
\begin{equation}\label{eq:boundSigma}
\sup_{t\in\bT, v\in\cA_N}\bE \Big[\abs{\Sigma\big(Y^{\ep,v}_t \big)}^p\Big] < \infty; 
\end{equation}
\item the equation for~$Y^\ep$ in~\eqref{eq:smalltime} is pathwise unique for small enough $\ep>0$.
\end{itemize}
\end{assumption}
The choice of kernel~$K_2$ is a common setup in rough volatility models and allow for more explicit results.
These conditions ensure that {\bf H2} holds with limit coefficients $b\equiv (0,0,0)^\top$ and $\sigma=\sigma_\ep$. 
Furthermore, $Y^\ep$ is an autonomous subsystem in the sense of Definition~\ref{def:autonomous} and {\bf H3} and the bound~\eqref{eq:boundgsto} hold. 
An pathwise unique solution of the system~\eqref{eq:rvolmodel} exists since~$X^\ep$ is explicit from~$Y^\ep$, and {\bf H4} is satisfied.

%%%%%%%%%%%%%%%%%%%%%%%%%%%%%%%%%%%%%%%%%%%%%%%%%%%%%%%%%%%%%%%%%%%%%%%%%%%%%%%%%%%%%%%%%%%%%%%%%%%%%%%%%%%%%%%%%%%%%%%%%%%%%%%%%%%%

%%%%%%%%%%%%%%%%%%%%%%%%%%%%%%%%%%%%%%%%%%%%%%%%%%%%%%%%%%%%%%%%%%%%%%%%%%%%%%%%%%%%%%%%
\subsubsection{Large deviations} 
For each control~$v\in \cS_N$ with $N>0$, the limit equation~\eqref{eq:LDPlimit} of the volatility in the large deviations regime reads
\begin{equation}
\vphi_t = y_0 + \int_0^t \frac{(t-s)^{H-\half}}{\Gamma(H+\half)} \zeta(\vphi_s) v_s\ds, \qquad \text{for }t \in\bT.
\label{eq:LDPlimitrvol}
\end{equation}
From the uniform bound on~$\vphi$ derived in Remark~\ref{rem:boundphi} and the continuity of~$\Sigma$, we obtain that~$\abs{\Sigma(\vphi_t)}$ is uniformly bounded in~$t\in\bT$ and in~$v\in\cS_N$, hence~\eqref{eq:boundgdet} holds. Therefore Assumption~\ref{assu:Gammabound} and {\bf H1 - H4}
follow from Assumption~\ref{assu:rvolLDPSmallTime}.
Mimicking the fractional integral notation from Section~\ref{sec:NonConv}, we introduce for convenience the notations
$$
\II^{H+\half}_x(f) := x + \int_0^t \frac{(t-s)^{H-\half}}{\Gamma(H+\half)} f_s\ds
\qquad\text{and}\qquad
\II^{H+\half}_x(L^1):=\left\{\II^{H+\half}_x(f), f\in L^1\right\},
$$
and the fractional derivative~$\DD$ is defined in~\eqref{eq:RiemmanLiouville}.
From now on, to simplify the statements, 
we write $Z^\ep\sim\LDP(I, \ep^{-1})$ to express that the family of random variables~$\{Z^\ep\}_{\ep>0}$ satisfies an LDP with rate function~$I$ and speed~$\ep^{-1}$, 
as~$\ep$ tends to zero.
%%%%%%%%%%%%%%%%%%%%%%%%%%%%%%%%%%%%%%%%%%%%

\begin{proposition}[Large deviations]
\label{prop:rvolLDP}
Under Assumptions~\ref{assu:relax}, \ref{assu:kernelrvol} and~\ref{assu:rvolLDPSmallTime}, the following hold:
\begin{enumerate}[{\bf (L1)}]
    \item~$(X^\ep,Y^\ep)\sim\LDP\left(I,\ep^{-2H}\right)$, where~$I: \cW^2\to \overline{\bR}_+$ is given by
    \[
     I(\phi,\vphi) =  \inf\left\{ \half \int_0^T \left(u_t^2 + v_t^2\right) \dt:
    u,v\in L^2, 
    \phi_t = \int_0^t \sqrt{\Sigma(\vphi_s)} \big( \rrho u_s + \rho v_s\big) \ds,  \,
    \vphi = \II_{y_0}^{H+\half} (\zeta(\vphi) v) \right\};
    \]
    \item~$X^\ep \sim\LDP\left(I^X,\ep^{-2H}\right)$, where
    \[
    \displaystyle I^X(\phi) = \inf \big\{ I(\phi,\vphi): \vphi \in \II^{H+\half}_{y_0}(L^1) \big\},
    \]
    if $\phi \in \Ac_0$ and infinity otherwise;
    \item $\ep^{H-\half} X_\ep\sim\LDP\left(I^X_1,\ep^{-2H}\right)$ where 
    $I^X_1 (x) = \inf \big\{ I^{X}(\phi): \phi_1=x \big\}$ for all $x\in\bR$;
    \item $Y^\ep\sim\LDP\left(I^Y,\ep^{-2H}\right)$, where
    \[
    \displaystyle 
    I^Y(\vphi)=  \half \int_0^T \left( \frac{\DD^{H+\half}(\vphi-y_0)(t)}{\zeta(\vphi_t)}\right)^2 \one_{\zeta(\vphi_t)\neq0} \dt,
    \]
    if $\vphi\in\II_{y_0}^{H+\half}(L^1)$ and infinity otherwise;
    \item $Y_\ep\sim\LDP\left(I^Y_1,\ep^{-2H}\right)$, with
$I^Y_1(y)= \inf\big\{ I^Y(\vphi) : \vphi_1=y \big\}$ for $y \in \bR$.
    \end{enumerate}
\end{proposition}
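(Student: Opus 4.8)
The plan is to deduce Proposition~\ref{prop:rvolLDP} from the general results of Section~\ref{section:main}, specialised to the rescaled system~\eqref{eq:smalltime} via the concrete choices of $K$, $b_\ep$, $\sigma_\ep$ and $\vartheta_\ep = \ep^H$ recorded above. The first step is to verify that all hypotheses of Theorem~\ref{thm:LDP} are in force: as already observed in the text, Assumption~\ref{assu:rvolLDPSmallTime} together with Assumption~\ref{assu:kernelrvol} implies {\bf H1}--{\bf H4}, with limiting coefficients $b\equiv 0$ and $\sigma = \sigma_\ep$; Assumption~\ref{assu:K} holds for the matrix $K$ by the homogeneity computation; and Assumption~\ref{assu:Gammabound} holds with $\Upsilon = \{2\}$, $\Gamma = |\Sigma|$, since~\eqref{eq:boundSigma} gives~\eqref{eq:boundgsto} and the uniform bound on $\vphi$ from Remark~\ref{rem:boundphi} combined with continuity of $\Sigma$ gives~\eqref{eq:boundgdet}. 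Assumption~\ref{assu:relax} is taken as a hypothesis of the proposition. Hence Theorem~\ref{thm:LDP} applies and $(X^\ep, Y^\ep) \sim \LDP(I, \ep^{-2H})$ where $I$ is the rate function~\eqref{eq:raterelax} and $\cG^0_v$ is the solution set of the limiting equation~\eqref{eq:LDPlimit}.

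The second step is to unwind~\eqref{eq:LDPlimit} for this particular system to obtain the explicit formula in {\bf (L1)}. Writing the $\bR^3$-valued control as $v = (v^{(1)}, v^{(2)}, v^{(3)})$ and the limit path as $(\phi, \vphi, \cdot)$ (the third, meaningless, component being identically zero), the structure of $\sigma$ shows that the first component equation reads $\phi_t = \int_0^t \sqrt{\Sigma(\vphi_s)}(\rho v^{(1)}_s + \rrho v^{(2)}_s)\ds$ and the second reads $\vphi = \II^{H+\half}_{y_0}(\zeta(\vphi) v^{(1)})$; the cost $\half\int_0^T |v_s|^2 \ds$ then only involves $v^{(1)}$ and $v^{(2)}$, so after relabelling $u := v^{(2)}$, $v := v^{(1)}$ one obtains exactly the stated variational formula for $I(\phi, \vphi)$. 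Minimising over the part of the noise that does not enter the constraints is the routine bookkeeping here.

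The remaining items {\bf (L2)}--{\bf (L5)} follow by the contraction principle~\cite[Theorem 4.2.1]{DZ10}. For {\bf (L2)} and {\bf (L4)}, the projections $(\phi,\vphi) \mapsto \phi$ and $(\phi,\vphi) \mapsto \vphi$ are continuous on $\cW^2$, so the image rate functions are the stated infima; one then has to identify these infima with the closed-form expressions. For $I^Y$ this is immediate: when $\vphi \in \II^{H+\half}_{y_0}(L^1)$, the constraint $\vphi = \II^{H+\half}_{y_0}(\zeta(\vphi) v)$ forces $\zeta(\vphi_t) v_t = \DD^{H+\half}(\vphi - y_0)(t)$ for a.e.\ $t$, so on $\{\zeta(\vphi_t) \neq 0\}$ one must have $v_t = \DD^{H+\half}(\vphi - y_0)(t)/\zeta(\vphi_t)$, while on $\{\zeta(\vphi_t)=0\}$ the value of $v_t$ is unconstrained by the $\vphi$-equation and is taken to be $0$ to minimise the cost; this yields the displayed $I^Y$, and if $\vphi \notin \II^{H+\half}_{y_0}(L^1)$ no admissible $v$ exists and $I^Y = \infty$. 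For $I^X$ the characterisation of the effective domain as $\Aco$ (absolutely continuous paths null at $0$) comes from the fact that $\phi_t = \int_0^t \sqrt{\Sigma(\vphi_s)}(\rrho u_s + \rho v_s)\ds$ with $\sqrt{\Sigma(\vphi)}$ bounded (Remark~\ref{rem:boundphi}) and $u,v \in L^2$ forces $\dot\phi \in L^2 \subset L^1$; conversely any such $\phi$ is attainable. Items {\bf (L3)} and {\bf (L5)} are the finite-dimensional projections $\phi \mapsto \phi_1$ and $\vphi \mapsto \vphi_1$, again continuous, so the contraction principle gives $I^X_1, I^Y_1$ directly; one uses here that $\ep^{H-\half}X_\ep = X^\ep_1$ and $Y_\ep = Y^\ep_1$ by the definition of the rescaling.

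The main obstacle is the identification of the effective domains and the inversion of the fractional integral constraint in {\bf (L2)} and {\bf (L4)} — in particular justifying that $\vphi \in \II^{H+\half}_{y_0}(L^1)$ is not only necessary but that the pointwise formula $v = \DD^{H+\half}(\vphi - y_0)/\zeta(\vphi)$ genuinely recovers an $L^2$ control realising the infimum, and handling carefully the set where $\zeta(\vphi)$ vanishes (where $\vphi$ is locally constant in the fractional sense but $v$ is free). One must also check the measurability and integrability needed to apply the contraction principle cleanly, but this is standard given the boundedness of $\Sigma(\vphi)$ on the sub-level sets, which is exactly what Remark~\ref{rem:boundphi} provides.
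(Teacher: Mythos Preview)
Your proposal is correct and follows essentially the same approach as the paper: apply Theorem~\ref{thm:LDP} to the full (three-dimensional) rescaled system to obtain the LDP, identify the rate function by reading off~\eqref{eq:LDPlimit} with $b\equiv0$, and then derive {\bf (L2)}--{\bf (L5)} via the contraction principle. The only minor difference is that for {\bf (L4)} the paper applies Theorem~\ref{thm:LDP} directly to the autonomous process~$Y^\ep$ rather than contracting from the joint LDP, but both routes yield the same variational expression before inversion; your discussion of the inversion step and the handling of the set $\{\zeta(\vphi_t)=0\}$ is in fact more detailed than the paper's one-line ``inverting it as above''.
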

While {\bf (L1)}, {\bf (L2)} and {\bf (L4)} deal with pathwise large deviations,
{\bf (L3)} and {\bf (L5)} are one-dimensional large deviations statements, 
about the marginal distributions of~$X$ and~$Y$.
In this small-time behaviour case, we recover the same scaling as in~\cite{FGS19, FZ15}.

\begin{proof}\ %[Proof of Proposition~\ref{prop:rvolLDP}]\ 
\begin{enumerate}[{\bf (L1)}]
    \item As discussed above, the assumptions of Theorem~\ref{thm:LDP} are satisfied, 
    so that the three-dimensional process~$(X^\ep,Y^\ep,Z^\ep)$, where $Z^\ep \equiv0$ for all $\ep>0$, satisfies an LDP with rate function
    \begin{align*}
        J(\phi,\vphi,\psi) =  \inf\bigg\{ & \half \int_0^T \left(u_t^2 + v_t^2 + w_t^2\right) \dt: \quad u,v,w\in L^2, \\
    &\phi_t = \int_0^t \sqrt{\Sigma(\vphi_s)} \big( \rrho u_s + \rho v_s\big) \ds, \quad \vphi = \II_{y_0}^{H+\half} \big( \zeta(\vphi) v \big), \quad \psi\equiv0 \bigg\}.
    \end{align*}
    The map $(X^\ep,Y^\ep,Z^\ep)\mapsto (X^\ep,Y^\ep)$ is continuous so that the contraction principle~\cite[Theorem 4.2.1]{DZ10} yields an LDP for~$(X^\ep,Y^\ep)$ with rate function
$\inf \big\{ J(\phi,\vphi,\psi): \psi\equiv0 \big\} = J(\phi,\vphi,0)$,
    which corresponds to~$I$.
    \item Since the map $(X^\ep,Y^\ep)\mapsto X^\ep$ is continuous, the claim follows from the contraction principle.
    \item Projecting the pathwise large deviations {\bf (L2)} onto the last coordinate point $t=1$
is equivalent to applying the contraction principle, and the claim follows immediately.
\item A direct application of Theorem~\ref{thm:LDP} yields an LDP with rate function
\begin{equation*}
    I^Y(\vphi)= \inf\bigg\{ \half \int_0^T  v_t^2 \dt: v\in L^2, \vphi = \II_{y_0}^{H+\half}(\zeta(\vphi) v)
    \bigg\};
\end{equation*}
Inverting it as above ends the proof and {\bf (L5)} follows from the contraction principle.
\end{enumerate}
\end{proof}
We observe that in some special cases one can reach a more explicit expression for~$I$. 
\begin{corollary}
Under the same assumptions as Proposition~\ref{prop:rvolLDP}, if~$\rho=0$ or~$\zeta(\vphi_t)\neq0$ almost everywhere, the rate function~$I$ can be written
\[
    I(\phi,\vphi)= \frac{1}{2\rrho^2} \int_0^T \left(\frac{\dot\phi_t^2}{\Sigma(\vphi_t)}\one_{\Sigma(\vphi_t)\neq0 } - \frac{2\rho \dot\phi_t  \DD^{H+\half} (\vphi-y_0)(t)}{\zeta(\vphi_t) \sqrt{\Sigma(\vphi_t)}} \one_{\zeta(\vphi_t)\Sigma(\vphi_t)\neq0 }  + 
    \left(\frac{\DD^{H+\half} (\vphi-y_0)(t)}{\zeta(\vphi_t)}\right)^2 \one_{\zeta(\vphi_t)\neq0 } \right)\dt,
\]
if~$\phi\in\Ac_0$ and $\vphi\in\II_{y_0}^{H+\half}(L^1)$, and infinity otherwise.
\label{coro:explicitrate}
\end{corollary}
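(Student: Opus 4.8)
The plan is to unfold explicitly the two nested constrained minimisations defining the rate function~$I$ obtained in Proposition~\ref{prop:rvolLDP}{\bf (L1)}, namely
\[
I(\phi,\vphi) = \inf\Big\{ \half\int_0^T \big(u_t^2 + v_t^2\big)\dt : u,v\in L^2,\ \phi_t = \int_0^t \sqrt{\Sigma(\vphi_s)}\big(\rrho u_s + \rho v_s\big)\ds,\ \vphi = \II_{y_0}^{H+\half}\big(\zeta(\vphi)v\big) \Big\}.
\]
First I would pin down where $I$ is finite. If $\vphi\notin\II_{y_0}^{H+\half}(L^1)$ there is no admissible $v$, so $I(\phi,\vphi)=+\infty$; and since $\Sigma(\vphi_\cdot)$ is bounded (Remark~\ref{rem:boundphi} together with continuity of~$\Sigma$) and $u,v\in L^2$, the first constraint forces $\phi$ to be absolutely continuous and null at~$0$, so $I=+\infty$ off $\Ac_0\times\II_{y_0}^{H+\half}(L^1)$, as claimed. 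On that set, inverting the Riemann--Liouville integral in the second constraint (so that $\vphi-y_0=\II^{H+\half}(\zeta(\vphi)v)$ becomes $\DD^{H+\half}(\vphi-y_0)(t)=\zeta(\vphi_t)v_t$) and differentiating the first, the two constraints become the pointwise identities $\zeta(\vphi_t)v_t = \DD^{H+\half}(\vphi-y_0)(t)$ and $\sqrt{\Sigma(\vphi_t)}\big(\rrho u_t+\rho v_t\big)=\dot\phi_t$, for a.e.\ $t\in\bT$.

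Next I would solve these pointwise for~$v$ and then for~$u$. Under either hypothesis the optimal~$v$ is determined a.e.: on $\{\zeta(\vphi_t)\neq0\}$ one must have $v_t=\DD^{H+\half}(\vphi-y_0)(t)/\zeta(\vphi_t)$, while on $\{\zeta(\vphi_t)=0\}$ the first identity forces $\DD^{H+\half}(\vphi-y_0)(t)=0$ and the minimiser is $v_t=0$ (this set has zero measure precisely when $\zeta(\vphi_t)\neq0$ for a.e.\ $t$, and when $\rho=0$ it is harmless because, with $\rrho=1$, the $v$-minimisation decouples entirely from~$u$). With~$v$ fixed, minimising $\int_0^T u_t^2\dt$ subject to the affine relation $\rrho u_t+\rho v_t=\dot\phi_t/\sqrt{\Sigma(\vphi_t)}$ on $\{\Sigma(\vphi_t)\neq0\}$ gives $u_t=\rrho^{-1}\big(\dot\phi_t/\sqrt{\Sigma(\vphi_t)}-\rho v_t\big)$, whereas on $\{\Sigma(\vphi_t)=0\}$ finiteness of~$I$ already forces $\dot\phi_t=0$, so $u_t=0$ is optimal there. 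A measurable-selection remark, using that these pointwise minimisers are measurable in~$t$ and, when $I(\phi,\vphi)<\infty$, lie in $L^2$ thanks to the bounds in Lemma~\ref{lemma:tightLDP} and Remark~\ref{rem:boundphi}, guarantees that the joint infimum over $(u,v)$ is indeed attained by this choice, while conversely any admissible pair has cost no smaller than it.

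Finally I would substitute and simplify. On $\{\Sigma(\vphi_t)\neq0\}$, expanding the square and using $\rho^2+\rrho^2=1$ gives
\[
u_t^2+v_t^2=\frac{1}{\rrho^2}\left(\frac{\dot\phi_t^2}{\Sigma(\vphi_t)}-\frac{2\rho\,\dot\phi_t\,v_t}{\sqrt{\Sigma(\vphi_t)}}+v_t^2\right),
\]
and inserting $v_t=\DD^{H+\half}(\vphi-y_0)(t)/\zeta(\vphi_t)$ (legitimate a.e.\ under the standing hypotheses) produces precisely the three-term integrand of the statement; the indicators $\one_{\Sigma(\vphi_t)\neq0}$, $\one_{\zeta(\vphi_t)\Sigma(\vphi_t)\neq0}$ and $\one_{\zeta(\vphi_t)\neq0}$ simply record, term by term, which of $u$ and $v$ has been set to zero on the corresponding degenerate set. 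The main obstacle I anticipate is exactly this bookkeeping on the exceptional sets $\{\Sigma(\vphi_\cdot)=0\}$ and $\{\zeta(\vphi_\cdot)=0\}$: one has to verify carefully that on each of them the constraints genuinely force the relevant derivative ($\dot\phi_\cdot$, respectively $\DD^{H+\half}(\vphi-y_0)$) to vanish, so that the free control may legitimately be taken to be~$0$ without violating admissibility, and that the remaining contribution matches the indicator pattern in the formula; once that is settled, everything else is the routine least-squares manipulation above.
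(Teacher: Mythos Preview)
Your proposal is correct and follows essentially the same route as the paper: invert the two constraints to express the optimal controls $u$ and $v$ explicitly (setting them to zero on the degenerate sets $\{\Sigma(\vphi_t)=0\}$ and $\{\zeta(\vphi_t)=0\}$, the latter being legitimate precisely under the hypothesis $\rho=0$ or $\zeta(\vphi_t)\neq0$ a.e.), then substitute and expand $u_t^2+v_t^2$. The paper's proof is terser---it cites \cite[Remark~2.3]{CDMD15} for the ``optimal control is zero on the degenerate set'' point and notes in one line that correlation prevents taking $v_t=0$ when $\zeta(\vphi_t)=0$---whereas you spell out the exceptional-set bookkeeping and the measurable-selection argument more carefully, but the substance is identical.
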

\begin{proof}
We start from the definition of~$I$ given in Proposition~\ref{prop:rvolLDP}\textbf{(L1)}. For each~$\phi\in\Aco$ and $\vphi\in\II^{H+\half}_{y_0}(L^1)$, reverting the integral which defines~$\phi$ gives
\[
u_t = \frac{1}{\rrho} \left(\frac{\dot{\phi_t}}{\sqrt{\Sigma(\vphi_t)}}- \rho v_t \right)\one_{\Sigma(\vphi_t)\neq0}, 
\quad \text{for all } t\in\bT,
\]
because whenever~$\Sigma(\vphi_t)=0$, although~$u$ is not uniquely determined by~$\phi$, the optimal choice of control (the one minimising the cost) is~$u_t=0$, see~\cite[Remark 2.3]{CDMD15} for more details. 
In the uncorrelated case~$\rho=0$, the same reasoning for the equation that~$\vphi$ solves yields for all~$t\in\bT$
\[
\quad v_t = \frac{1}{\zeta(\vphi_t)}\DD^{H+\half} (\vphi-y_0)(t) \one_{\zeta(\vphi_t)\neq0}.
\]
Furthermore, in the special case where~$\zeta(\vphi_t)\neq0$ almost everywhere the equality above holds almost everywhere, which is sufficient for the optimisation problem (because of the correlation, $v$ may not be equal  to zero even if~$\zeta(\vphi)$ is).
Plugging these into the rate function yields the claim.
The last condition stands because if~$\phi\notin\Aco$ or~$\vphi\notin\II^{H+\half}_{y_0}(L^1)$ then they cannot satisfy the equations and therefore the infimum takes place over an empty set.
\end{proof}

\notthis{
\begin{remark}
\label{remark:FWcomp}
The form of $I$ above is reminiscent of the Freidlin-Wentzell rate function after application of the contraction principle from the Brownian motion LDP. The weak convergence approach allows to relax the assumptions of continuity (essentially Lipschitz continuity of the coefficients) to simple well-posedness of the equation.
\end{remark}
}

%%%%%%%%%%%%%%%%%%%%%%%%%%%%%%%%%%%%%%%%%%%%%%%%%%%%%%%%%%%%%%%%%%%%%%%%
\subsubsection{Moderate deviations}
We now show how our moderate deviations results apply to the rough volatility model~\eqref{eq:rvolmodel}.
Let~$h_\ep  =\ep^{-\beta}$ for any $\beta \in (0,H)$, and define the two-dimensional process
\[
\eta^\ep := \frac{1}{\vartheta_\ep h_\ep  } \big( X^\ep, Y^\ep-y_0\big) = \frac{1}{\ep^{H-\beta}} \big( X^\ep, Y^\ep-y_0\big).
\]
The case~$\beta=0$ corresponds to the Central Limit Theorem, whereas $\beta=H$ is the LDP regime, 
so that MDP precisely corresponds to some interpolation between the two. 
Regarding the assumptions note that~$y_0^\ep=y_0$ and~$b_\ep$ clearly tends to zero as~$\ep$ goes to zero, hence it is trivial that $b\equiv0$ is continuously differentiable and Lipschitz continuous, and thus {\bf H5} and {\bf H7} hold. 

Now let Assumption~\ref{assu:rvolLDPSmallTime} hold.
Denoting the $i$-th component of~$b$ with~$b^{(i)}$, one notices that $\big\lvert b_\ep^{(1)}(t,(x,y))-b^{(1)}(t,(x,y))\big\lvert\le \ep^{H+\half} \abs{\Sigma(y)}$ and $\big\lvert b_\ep^{(2)}(t,(x,y))-b^{(2)}(t,(x,y))\big\lvert\le \ep^{1+\varpi} C_L (1+\abs{y})$ by linear growth of~$\bb$.
For {\bf H8} to hold, one then requires that $\ep^{1+\varpi - (H-\beta)}$ and $\ep^{H+\half-(H-\beta)}$ both tend to zero as $\ep$ goes to zero.
Moreover the bound~\eqref{eq:boundSigma} implies~\eqref{eq:Xibound}.
\begin{assumption}[Moderate deviations assumptions]\label{assu:rvolMDPSmallT}\
\begin{itemize}
\item The parameters~$H,\varpi$ and~$\beta$ are such that~$(1+\varpi) \wedge (H+\half) > H-\beta$;
\item There exists~$\delta>0$ such that $\Sigma$ and~$\zeta$ are locally~$\delta$-H\"older continuous.
\end{itemize}
\end{assumption}
Notice that the first inequality is always satisfied if~$H\le\half$.
Therefore, Assumptions~\ref{assu:kernelrvol}, \ref{assu:rvolLDPSmallTime},~\ref{assu:rvolMDPSmallT} imply {\bf H1 - H8} and Assumptions~\ref{assu:K} and~\ref{assu:Gammabound}.
Similarly to the LDP case, and recalling the definition of MDP from the introduction, 
we write $Z^\ep \sim \MDP(\Lambda, l_\ep)$ if in fact $\ep^{\beta-H}(Z^\ep-\overbar{Z})\sim\LDP(\Lambda,l_\ep)$,
for any $l_\ep>0$ converging to zero as~$\ep$ tends to zero, where~$\overbar Z$ is the limit in distribution of~$Z^\ep$. We also denote the subset of~$\cW^d$ of absolutely continuous functions by~$\Ac$, 
and~$\Aco:= \{ \phi\in\Ac, \phi_0=0\}$, and refer to~\eqref{eq:RiemmanLiouville} for the definition of the Riemann-Liouville fractional derivative.

\begin{proposition}
\label{prop:rvolMDP}
Under Assumptions~\ref{assu:kernelrvol}, ~\ref{assu:rvolLDPSmallTime}, ~\ref{assu:rvolMDPSmallT} and the condition~$\zeta(y_0)\Sigma(y_0)\neq0$, the following moderate deviations hold:
\begin{enumerate}[{\bf (M1)}]
    \item $(X^\ep,Y^\ep)\sim\MDP\left(\Lambda,\ep^{-2\beta}\right)$, where~$\Lambda: \cW^2\to \overline{\bR}_+$ is given by
     \begin{align}
    \label{eq:rvolrateMDP}
    \Lambda(\phi,\vphi)= \frac{1}{2\rrho^2} \int_0^T \left( \frac{\dot\phi_t^2}{\Sigma(y_0)} - \frac{2\rho \dot\phi_t  \DD^{H+\half} (\vphi-y_0)(t)}{\zeta(y_0) \sqrt{\Sigma(y_0)}}   + 
    \left(\frac{\DD^{H+\half} (\vphi-y_0)(t)}{\zeta(y_0)}\right)^2  \right)  \dt,
\end{align}
if~$\phi\in\Ac_0$ and $\vphi\in\II_{y_0}^{H+\half}(L^1)$, and infinity otherwise;
    \item $X^\ep \sim\MDP\left(\Lambda^X,\ep^{-2\beta}\right)$, where
    $\displaystyle \Lambda^X (\phi) = \frac{1}{2 \Sigma(y_0)} \int_0^T  \dot\phi_t^2 \dt$,
    if $\phi\in\Ac_0$ and infinity otherwise;
    \item $\ep^{H-\half} X_\ep\sim\MDP\left(\Lambda_1^X, \ep^{-2\beta}\right)$, with 
    $\displaystyle\Lambda^X_1(x)=\frac{x^2}{2 \Sigma(y_0)}$, for $x\in\bR$;
    \item $Y^\ep\sim\MDP\left(\Lambda^Y,\ep^{-2\beta}\right)$, where~$\Lambda^Y: \cW\to \overline{\bR}_+$ is given by
    $$
    \Lambda^Y(\vphi)=  \half \int_0^T  \left(\frac{\DD^{H+\half}(\vphi-y_0)(t)}{\zeta(y_0)} \right)^2 \one_{\zeta(y_0)\neq0 } \dt,
    \qquad\text{if  }\vphi\in\II_{y_0}^{H+\half}(L^1) \text{ and infinity otherwise};
    $$
    \item $Y_\ep\sim\MDP(\Lambda^Y_1,\ep^{-2\beta})$,  where
     $\Lambda^Y_1(y)= \half y^2$
     for $y\in\bR$.
    \end{enumerate}
\end{proposition}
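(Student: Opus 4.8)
The plan is to derive all five statements from a single application of Theorem~\ref{thm:MDP}, followed by repeated use of the contraction principle and a handful of elementary one-dimensional variational computations. First I would invoke the discussion preceding the proposition: Assumptions~\ref{assu:kernelrvol},~\ref{assu:rvolLDPSmallTime} and~\ref{assu:rvolMDPSmallT} guarantee that {\bf H1--H8} and Assumptions~\ref{assu:K},~\ref{assu:Gammabound} hold for the three-dimensional system~\eqref{eq:smalltime} augmented with the trivial coordinate $Z^\ep\equiv0$. Theorem~\ref{thm:MDP} then yields an MDP for $(X^\ep,Y^\ep,Z^\ep)$ with speed $h_\ep^2=\ep^{-2\beta}$ and rate function of the form~\eqref{eq:grfMDP}, where $\cT^0_v$ solves the linearised equation~\eqref{eq:MDPlimit}. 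Since the limiting drift is $b\equiv0$ we have $\nabla b\equiv0$ and the deterministic limit path is $\overbar X\equiv(0,y_0,0)$; inserting this into~\eqref{eq:MDPlimit} and calling $(u,v)$ the controls attached to the two active Brownian components collapses the limit system to $\phi_t=\int_0^t\sqrt{\Sigma(y_0)}\,(\rrho u_s+\rho v_s)\ds$, $\vphi=\II^{H+\half}_{y_0}(\zeta(y_0)v)$ and a vanishing third coordinate. Projecting out $Z^\ep$ (a continuous map) identifies the pathwise MDP rate function of $(X^\ep,Y^\ep)$ as the infimum of $\half\int_0^T(u_t^2+v_t^2)\dt$ over controls realising $(\phi,\vphi)$.

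For {\bf (M1)} I would then repeat the inversion used in the proof of Corollary~\ref{coro:explicitrate}, but with the diffusion coefficients frozen at $y_0$: when $\phi\in\Aco$ and $\vphi\in\II^{H+\half}_{y_0}(L^1)$, the constraint $\zeta(y_0)\neq0$ gives $v_t=\DD^{H+\half}(\vphi-y_0)(t)/\zeta(y_0)$, and $\Sigma(y_0)\neq0$ gives $u_t=\rrho^{-1}\big(\dot\phi_t/\sqrt{\Sigma(y_0)}-\rho v_t\big)$; substituting these into $u_t^2+v_t^2$ and simplifying via $\rho^2+\rrho^2=1$ produces exactly the integrand of~\eqref{eq:rvolrateMDP}. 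Because $\Sigma(y_0),\zeta(y_0)\neq0$, the indicator functions appearing in Corollary~\ref{coro:explicitrate} are all identically one here, which is why~\eqref{eq:rvolrateMDP} carries none; if $\phi\notin\Aco$ or $\vphi\notin\II^{H+\half}_{y_0}(L^1)$ the constraints are unsatisfiable and the infimum, being over the empty set, equals $+\infty$.

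The other four items follow by projecting $(X^\ep,Y^\ep)$ onto a coordinate, or onto the evaluation of a coordinate at $t=1$, each such map being continuous, and invoking the contraction principle~\cite[Theorem 4.2.1]{DZ10}. For {\bf (M2)} I would minimise the rate function of {\bf (M1)} over $\vphi$; equivalently, over $v\in L^2$ (every such $v$ yields an admissible $\vphi$), and for fixed $t$ minimise $u_t^2+v_t^2$ under the single scalar constraint $\rrho u_t+\rho v_t=\dot\phi_t/\sqrt{\Sigma(y_0)}$, whose minimum is the squared projection $\dot\phi_t^2/\Sigma(y_0)$; this gives $\Lambda^X(\phi)=\frac{1}{2\Sigma(y_0)}\int_0^T\dot\phi_t^2\dt$ on $\Aco$ and $+\infty$ otherwise. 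For {\bf (M4)}, symmetrically, the optimal choice is $u\equiv0$, leaving $\half\int_0^Tv_t^2\dt$ with $v$ determined by $\vphi$, hence the stated formula (one may alternatively apply Theorem~\ref{thm:MDP} directly to the autonomous subsystem $Y^\ep$). Finally, {\bf (M3)} and {\bf (M5)} come from projecting $X^\ep$, respectively $Y^\ep$, onto its value at $t=1$: in {\bf (M3)} the minimiser of $\int_0^T\dot\phi_t^2\dt$ subject to $\phi_0=0$ and $\phi_1=x$ is affine on $[0,1]$ and constant afterwards, yielding $x^2/(2\Sigma(y_0))$; in {\bf (M5)} one minimises $\int_0^T\big(\DD^{H+\half}(\vphi-y_0)(t)\big)^2\dt$ under the linear endpoint constraint $\vphi_1=y$, a Cauchy--Schwarz / Lagrange-multiplier problem whose optimiser satisfies $\DD^{H+\half}(\vphi-y_0)(t)\propto(1-t)^{H-\half}\one_{[0,1]}(t)$ and delivers the stated closed form.

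I do not expect any of these soft steps to be the real obstacle; the technical crux is the (already sketched) hypothesis verification for Theorem~\ref{thm:MDP} --- most delicately {\bf H6} (local $\delta$-H\"older continuity of $\sigma_\ep$, i.e.\ of $\Sigma$ and $\zeta$), {\bf H8} together with the moment bound~\eqref{eq:Xibound} in the regime where $\Sigma$ grows only exponentially (which is exactly where~\eqref{eq:boundSigma} and Lemma~\ref{lemma:boundeta} are used), and the parameter inequality $(1+\varpi)\wedge(H+\half)>H-\beta$ ensuring the rescaled drift is asymptotically negligible. Once these are in place, Theorem~\ref{thm:MDP} does the heavy lifting and the rest is bookkeeping.
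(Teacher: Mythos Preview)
Your proposal is correct and follows essentially the same route as the paper: apply Theorem~\ref{thm:MDP} to the augmented system, project out the trivial coordinate, invert the frozen-coefficient constraints to obtain~\eqref{eq:rvolrateMDP}, and then repeatedly apply the contraction principle for {\bf (M2)}--{\bf (M5)}. The only tactical differences are in the one-dimensional minimisations: for {\bf (M2)} the paper substitutes $\dot\psi:=\DD^{H+\half}(\vphi-y_0)/\zeta(y_0)$ and solves the Euler--Lagrange equation $\ddot\psi=\rho\ddot\phi/\sqrt{\Sigma(y_0)}$, whereas you minimise $u_t^2+v_t^2$ pointwise under the affine constraint $\rrho u_t+\rho v_t=\dot\phi_t/\sqrt{\Sigma(y_0)}$; for {\bf (M5)} the paper reduces (via the same substitution) to the problem of {\bf (M3)}, while you argue directly by Cauchy--Schwarz on the linear functional $v\mapsto\int_0^1(1-s)^{H-\half}v_s\ds$. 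Both your shortcuts are valid and arguably more transparent, but the content is the same.
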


\begin{proof}\ 
\begin{enumerate}[{\bf (M1)}]
\item As discussed above, the assumptions of Theorem~\ref{thm:MDP} are satisfied, thus it yields an MDP with rate function
$$
    \Lambda(\phi,\vphi) = \left\{ \half \int_0^T \left(u_t^2 + v_t^2\right) \dt:
    u,v\in L^2, 
    \phi_t = \int_0^t \sqrt{\Sigma(y_0)} \big( \rrho u_s + \rho v_s\big) \ds, 
    \vphi = \II_{y_0}^{H+\half}(\zeta(y_0) v) \right\},
$$
    and inverting it as in the LDP case gives the claim.
    \item The contraction principle implies that an MDP for~$X^\ep$ holds with rate function
    $\Lambda^X (\phi) = \inf \big\{ \Lambda(\phi,\vphi): \vphi\in \II^{H+\half}_{y_0} (L^1) \big\}$. Let~$\psi\in\Ac_0$ such that~$\dot\psi:= \frac{\DD^{H+\half}(\vphi-y_0)}{\zeta(y_0)}$, then the rate function translates to
    \[
\Lambda^X (\phi) = \inf \bigg\{ \frac{1}{2\rrho^2} \int_0^T \bigg( \frac{\dot\phi_t^2}{\Sigma(y_0)}  - \frac{2\rho \dot\phi_t \dot\psi_t}{\sqrt{\Sigma(y_0)}}  + \dot\psi_t^2 \bigg) \dt, \quad \psi\in\Ac \bigg\},
\]
which can be solved as a variational problem as in~\cite[Corollary 2.4]{JK20}. The corresponding Euler-Lagrange equation reads $\ddot\psi=\rho \ddot\phi/\sqrt{\Sigma(y_0)}$ hence~$\dot\psi=\rho\dot\phi/\sqrt{\Sigma(y_0)}$ because~$\dot\psi_0=0$ by definition. Plugging into the above equation finishes the proof.
    \item The rate function is given by contraction principle as 
\[
\Lambda^X_1(x)=\inf\big\{ \Lambda^X (\phi): \phi\in\Ac_0, \phi_1=x\big\}= \inf \bigg\{ \frac{1}{2 \Sigma(y_0)} \int_0^T \dot\phi_t^2 \dt : \phi\in\Ac_0, \phi_1=x \bigg\}.
\]
Setting~$T=1$ the optimal path under the constraint~$\phi_1=x$ is~$\phi_t= x t$ by the Euler-Lagrange equation. Again, plugging it into the rate function ends the proof. 
    \item Theorem~\ref{thm:MDP} gives an MDP for~$Y^\ep$ with rate function
    \begin{equation*}
    \Lambda^Y(\vphi)= \inf\bigg\{ \half \int_0^T  v_t^2 \dt: v\in L^2, \vphi = \II_{y_0}^{H+\half}(\zeta(y_0) v)
    \bigg\}.
    \end{equation*}
    Inverting it yields {\bf (M4)}.
    \item By contraction principle one obtains $\Lambda^Y_1(y)= \inf\big\{ \Lambda^Y(\vphi), \vphi_1=y \big\}$. Then setting $\psi$ as in {\bf (M2)} it boils down to the same optimisation problem as for {\bf (M3)}.
\end{enumerate}
\end{proof}

As in the large deviations results, 
{\bf (M1)}, {\bf (M2)} and {\bf (M4)} correspond to pathwise statements, 
whereas {\bf (M3)} and {\bf (M5)} are finite-dimensional results about the marginal distributions.
For the log-stock price, {\bf (M3)} corresponds precisely to the moderately-out-of-the-money 
regime presented and justified in~\cite{FGeP18} (for diffusion volatility models), 
based on the observation that the range of observable strikes grows with maturity. Furthermore, one can always apply Theorem~\ref{thm:MDP} in the degenerate case~$\Sigma(y_0)\zeta(y_0)=0$ although the rate functions take slightly different forms.

%%%%%%%%%%%%%%%%%%%%%%%%%%%%%%%%%%%%%%%%%%%%%%%%%%%%%%%%%%%%%%%%%%%%%%%%%%%%%%%%%%%%%%%%%%%%%%%%%%%%%%%%%%%%ù
\subsubsection{Implied volatility asymptotics}
\label{sec:IVsmalltime}
We can easily deduce from the above results the asymptotic behaviour of the implied volatility, 
a standard norm for quoting option prices.
For each maturity $t\geq 0$ and log-moneyness $k\in\bR$, 
the implied volatility~$\widehat{\sigma}(t,k)$ is the unique non-negative solution to $C_{\BS} \big(t,k,\widehat{\sigma}(t,k)\big) = C(t,k)$, 
where~$C_{\BS}$ corresponds to the price of a European Call option under the Black-Scholes model, 
and~$C$ a given Call option price (for example in a rough volatility model). 
This notion is only well defined if the underlying stock price is a true martingale, 
which we have not assumed so far, and may require additional conditions on the coefficients.
This will be the case though in all our examples below, but for now, with the current level of generality, we assume it:

\begin{assumption}\label{assu:Martingale}
The process~$\exp(X^\ep)$ in~\eqref{eq:smalltime} is a true martingale for all small enough $\ep>0$.
\end{assumption}

Small-time implied volatility asymptotics can be derived from Properties~\ref{prop:rvolLDP} and~\ref{prop:rvolMDP} in a similar fashion. The explicit form of the MDP rate function allows a closed form expression.
\begin{corollary}
\label{coro:IV}
Let Assumption~\ref{assu:Martingale} hold.
\begin{itemize}
    \item[{\bf (LDP)}] Under the same assumptions as Proposition~\ref{prop:rvolLDP},
    \begin{equation}
\lim_{t\downarrow 0}\widehat{\sigma} \big(t,k t^{\half-H} \big)^2 
=\left\{
                \begin{array}{rl}
                  \displaystyle \frac{k^2}{2 \inf_{x\ge k}I^X_1(x)},  & \text{if } k>0, \\
                  \displaystyle \frac{k^2}{2 \inf_{x\le k}I^X_1(x)},  & \text{if } k<0.
                \end{array}
              \right.
\end{equation}
    \item[{\bf (MDP)}] Under the same assumptions as Proposition~\ref{prop:rvolMDP} and for any~$\beta\in(0,H)$, $k\neq0$,
\begin{equation}
\lim_{t\downarrow 0}  \widehat{\sigma} \big(t,k t^{\half-\beta} \big)^2 
= \Sigma(y_0).
\end{equation}
\end{itemize}
\end{corollary}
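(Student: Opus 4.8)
The plan is to deduce both limits from the finite-dimensional large, resp.\ moderate, deviation principles for the rescaled terminal log-price obtained in Proposition~\ref{prop:rvolLDP}\textbf{(L3)} and Proposition~\ref{prop:rvolMDP}\textbf{(M3)}, via the classical three-step passage: (i) deviations of the underlying $\Rightarrow$ exponential asymptotics of out-of-the-money (OTM) option prices; (ii) exponential asymptotics of the Black--Scholes price at the same scale; (iii) matching the two using monotonicity of $C_{\BS}$ in its volatility argument. Throughout I take maturity $t=\ep$, so the log-moneyness $k\ep^{\half-H}$ (resp.\ $k\ep^{\half-\beta}$) tends to $0$ because $H<\half$ (resp.\ $\beta<H<\half$), and I use the identity $\{X_\ep\ge k\ep^{\half-H}\}=\{\ep^{H-\half}X_\ep\ge k\}$ linking the payoff region to the deviation event.

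For the first step, when $k>0$ the option struck at $k\ep^{\half-H}$ is OTM, and applying the LDP \textbf{(L3)} to the half-line $[k,\infty)$ gives $\lim_{\ep\downarrow0}\ep^{2H}\log\bP(X_\ep\ge k\ep^{\half-H})=-\inf_{x\ge k}I^X_1(x)$, the infima over the open and closed half-lines coinciding since $I^X_1$ vanishes only at $0$; symmetrically for $k<0$ with $\bP(X_\ep\le k\ep^{\half-H})$ and $\inf_{x\le k}$. I would then transfer this to the OTM call (resp.\ put) price $\bE\big[(\E^{X_\ep}-\E^{k\ep^{\half-H}})^+\big]$: for the lower bound, restrict the expectation to a fixed compact $X_\ep$-window beyond the strike and use $(\E^a-\E^b)^+=(a-b)^+\big(1+\cO(|a|\vee|b|)\big)$ near the money; for the upper bound, dominate the payoff by $\E^{M}\one_{k\ep^{\half-H}<X_\ep\le M}+\E^{X_\ep}\one_{X_\ep>M}$, the first term carrying the correct rate, the second controlled by Assumption~\ref{assu:Martingale}, under which $\bE[\E^{X_\ep}]=1$ and the $\E^{X_\ep}$-tilted law of $\ep^{H-\half}X_\ep$ still obeys the same large deviation upper bound (the tilt being subexponential at speed $\ep^{-2H}$ as $X_\ep=\ep^{\half-H}(\ep^{H-\half}X_\ep)$), so that $\bE[\E^{X_\ep}\one_{X_\ep>M}]$ is negligible for $M$ large. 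This would yield $\lim_{\ep\downarrow0}\ep^{2H}\log\bE\big[(\E^{X_\ep}-\E^{k\ep^{\half-H}})^+\big]=-\inf_{x\ge k}I^X_1(x)$ for $k>0$, and the analogous put limit for $k<0$, recalling that a call and a put with the same strike share the same implied volatility.

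For the second and third steps, for any fixed $\sigma>0$ an elementary Gaussian tail estimate (or the explicit $\Phi(d_{1,2})$ expansions) gives $\lim_{\ep\downarrow0}\ep^{2H}\log C_{\BS}\big(\ep,k\ep^{\half-H},\sigma\big)=-\tfrac{k^2}{2\sigma^2}$, since $\tfrac{(k\ep^{\half-H})^2}{2\sigma^2\ep}=\tfrac{k^2}{2\sigma^2}\ep^{-2H}$, and the same for the Black--Scholes put when $k<0$. Setting $\sigma=\widehat\sigma(\ep,k\ep^{\half-H})$ and matching the exponential rates — for every $\delta>0$, monotonicity of $C_{\BS}$ in volatility pins $\widehat\sigma(\ep,k\ep^{\half-H})^2$, for $\ep$ small, between $\tfrac{k^2}{2\inf_{x\ge k}I^X_1(x)}\mp\delta$ — then gives the announced \textbf{(LDP)} statement. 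The \textbf{(MDP)} statement follows identically, with $\ep^{-2H}$ replaced by $\ep^{-2\beta}$ and $I^X_1$ by $\Lambda^X_1(x)=x^2/(2\Sigma(y_0))$ from Proposition~\ref{prop:rvolMDP}\textbf{(M3)}; here $\inf_{x\ge k}\Lambda^X_1(x)=\inf_{x\le k}\Lambda^X_1(x)=k^2/(2\Sigma(y_0))$, so the matched squared volatility is $k^2\big/\big(2\cdot k^2/(2\Sigma(y_0))\big)=\Sigma(y_0)$, independent of $k$.

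The main obstacle will be the upper bound in the first step, namely ruling out a sizeable contribution of atypically large $X_\ep$ to the OTM call price, where the near-the-money linearisation fails; this is precisely where Assumption~\ref{assu:Martingale} is needed, through $\bE[\E^{X_\ep}]=1$ and the ensuing exponential equivalence of the original and $\E^{X_\ep}$-tilted laws at speed $\ep^{-2H}$ (resp.\ $\ep^{-2\beta}$). The Black--Scholes benchmark and the final sandwich argument are routine.
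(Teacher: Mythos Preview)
Your three-step route (tail LDP $\Rightarrow$ OTM price asymptotics $\Rightarrow$ Black--Scholes matching) is correct in principle, but it differs from the paper's proof, which is much shorter: the paper only records the tail-probability limit from \textbf{(L3)} (resp.\ \textbf{(M3)}), computes the analogous Black--Scholes tail $\lim_{t\downarrow0}t^{2H}\log\bP_{\BS}(X_t\ge kt^{\half-H})=-k^2/(2\sigma^2)$, and then invokes \cite[Corollary~7.1]{GL14} as a black box that converts matching tail-probability asymptotics directly into implied-volatility limits. In other words, the paper outsources your steps (i)--(iii) to an existing result; what you are sketching is essentially a re-derivation of that result in this particular scaling regime.

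The main soft spot in your sketch is the call-price upper bound, which you yourself flag. Your splitting at a fixed $M$ and appeal to the ``$\E^{X_\ep}$-tilted law obeying the same LDP upper bound'' is the right heuristic, but as written it is not a proof: the claim that the share measure $\widetilde\bP$ (with density $\E^{X_\ep}$) inherits the LDP for $\ep^{H-\half}X_\ep$ needs justification beyond the observation that $\ep^{2H}X_\ep\to 0$ in $\bP$-probability --- you also need control under $\widetilde\bP$, which is circular without further input. The standard way to close this is to note that, under $\widetilde\bP$, Girsanov adds a drift $\sqrt{\Sigma(Y^\ep_s)}$ to~$B$; after the small-time rescaling this extra drift carries a factor $\ep^{\half}$ and hence disappears at speed $\ep^{-2H}$, so the whole machinery of Theorem~\ref{thm:LDP} re-applies verbatim and yields the same rate function under $\widetilde\bP$. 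Once that is said, your argument does go through (and an analogous remark handles the MDP with $\ep^{-2\beta}$). The paper's citation of \cite{GL14} buys exactly this step without having to re-run the LDP under the share measure. Also note that your aside ``$H<\half$'' is not part of Assumption~\ref{assu:kernelrvol} (which allows $H\in(0,1]$), though it holds in all the examples; the paper's black-box approach avoids that case distinction as well.
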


\begin{proof}\,

{\bf (LDP)} Consider the case $k>0$.
Proposition~\ref{prop:rvolLDP}{\bf (L3)} translates into
$$
    \lim_{t\downarrow 0} t^{2H}\log \bP( t^{H-\half} X_t \ge k)  =-\inf_{x\ge k}I^X_1(x).
$$
Meanwhile in the Black-Scholes model with constant volatility~$\sigma>0$ the log-price process satisfies
$X_t = -\frac{\sigma^2 t}{2} + \sigma B_t$ for all $t\in\bT$, and simple Gaussian computations yield
the large deviations behaviour
\[
\lim_{t\downarrow 0} t^{2H} \log \bP( X_t \ge k t^{\half-H})  = -\frac{k^2}{2\sigma^2}.
\]
The claim then follows directly from~\cite[Corollary 7.1]{GL14}, and by symmetry for the case $k<0$.

{\bf (MDP)} Following the same arguments as above, we obtain
\begin{equation*}
\lim_{t\downarrow 0}  \widehat{\sigma} \big(t,k t^{\half-\beta} \big)^2 
=\left\{
\begin{array}{ll}
\displaystyle \frac{k^2}{2 \inf_{x\ge k}\Lambda^X_1(x)}, & \text{if } k>0, \\
\displaystyle \frac{k^2}{2 \inf_{x\le k}\Lambda^X_1(x)}, & \text{if } k<0.
\end{array}
\right.
\end{equation*}
Plugging in the expression of~$\Lambda^X_1$ from {\bf (M3)} finishes the proof.
\end{proof}
This concludes the presentation of the general results for rough volatility models. The next sections display the diversity of the models found in the literature and how large and moderate deviations principles apply to them.

%%%%%%%%%%%%%%%%%%%%%%%%%%%%%%%%%%%%%%%%%%%%%%%%%%%%%%%%%%%%%%%%%%%%%%%%%%%%%%%%%%%%%%%%%%%%%%%%%%%%%%%%%%%%%%%%%%%%%%%%%%%%%%%%%%%%%%%%%%

\subsection{Small-time rescaling (examples)}
\label{sec:SmallTimeEx}

\subsubsection{Rough Stein-Stein}\label{sec:rSS}
The rough Stein-Stein, suggested in~\cite{HJL19} is an extension of the classical Stein-Stein volatility model~\cite{SteinStein} to the fractional setting.
It corresponds to~\eqref{eq:rvolmodel} with $K_1\equiv 1$ (hence $\varpi=0$), $K_2(t)= t^{H-\half}/\Gamma(H+\half)$,
$H\in(0,\half)$, $y_0>0$, $\Sigma(y)=y^2$, $\bb(y)= \kappa(\theta-y)$, $\kappa,\theta>0$ and $\zeta(y)\equiv \xi>0$.
The coefficients are Lipschitz continuous and well-behaved, hence Assumptions~\ref{assu:rvolLDPSmallTime} and~\ref{assu:rvolMDPSmallT} are easily checkable
and the limit equation~\eqref{eq:LDPlimitrvol} has a unique solution, hence Propositions~\ref{prop:rvolLDP} and~\ref{prop:rvolMDP} apply.
Note that because~$\zeta$ is a positive constant, Corollary~\ref{coro:explicitrate} gives the rate function~$I$ in integral form and one can solve {\bf (L5)} in closed-form using the Euler-Lagrange equation in a similar way as in the proof of Proposition~\ref{prop:rvolMDP}.
Furthermore, since~$Y$ is Gaussian its exponential moments are finite and Novikov's condition~\cite[Section 3.5.D]{KS98}
 ensures that Assumption~\ref{assu:Martingale} holds.
Therefore, Corollary~\ref{coro:IV} yields the small-time behaviour of the implied volatility.
Notice that the LDP and MDP for this model still hold when replacing the Riemann-Liouville kernel with the standard fractional Brownian motion by virtue of Theorem~\ref{thm:nonconv}. 
The pathwise LDP for this model was first derived in~\cite{HJL19} albeit with the different scaling $X^\ep_t := \ep^{H-\half+2\beta} X_{\ep t}$ and~$Y^\ep_t := \ep^{\beta} Y_{\ep t}$, for $\beta>0$.

%%%%%%%%%%%%%%%%%%%%%%%%%%%%%%%%%%%%%%%%%
\subsubsection{Rough Bergomi}\label{sec:rBergomi}
The rough Bergomi model as presented in~\cite{BFG16} reads
\begin{align*}
\left\{
\begin{array}{rl}
    X_t &= \displaystyle -\half \int_0^t V_s \ds + \int_0^t \sqrt{V_s} \D B_s, \\
    V_t &= \displaystyle V_0 \exp\left( \int_0^t \frac{(t-s)^{H-\half}}{\Gamma(H+\half)} \D W_s -at^{2H} \right),
\end{array}
\right.
\end{align*}
with $V_0>0$ and $a\in\bR$. A pathwise LDP for this model first appeared in~\cite{JPS18} using the Freidlin-Wentzell approach and a tailored proof. 
This case is quite intricate because the exponential does not satisfy the linear growth bound but we circumvented this issue by introducing the notion of autonomous system, illustrated in Example~\ref{ex:Bergomi} and completed by Assumption~\ref{assu:Gammabound} and~{\bf H3b}. 
Not only does this framework unifies the result of~\cite{JPS18} with other rough volatility models, but it also leads to a pathwise MDP.

With $Y:=\log(V)$, the system $(X,Y)$ fits into~\eqref{eq:rvolmodel} 
where $K_1(t)=t^{2H-1}$, $K_2(t)= t^{H-\half}/\Gamma(H+\half)$, for $H\in(0,\half)$, $\Sigma(y)=\exp(y)$, $\bb(y)=-a/(2H)$, $a>0$, $\zeta(y)\equiv 1$ and $y_0= \log(V_0)$.
The bound~\eqref{eq:boundSigma} is then satisfied since
$\int_0^t (t-s)^{H-\half} \D W_s$ is a Gaussian process with exponentional moments bounded uniformly in~$t\in\bT$, and for each~$v\in\cA_N$, $N>0$:
\[
\int_0^t (t-s)^{H-\half} v_s\ds \le N \frac{t^{2H}}{2H},
\]
almost surely, by Cauchy-Schwarz inequality.
Therefore~$\sup_{t\in\bT} \bE\big[ \exp\big( Y_t^{\ep,v}\big) \big]$ is finite, yielding the claim. 
Moreover, the volatility equation is explicit so we shall not be concerned with uniqueness and the rest of Assumptions~\ref{assu:rvolLDPSmallTime} and~\ref{assu:rvolMDPSmallT} is straightforward to check. This implies that Propositions~\ref{prop:rvolLDP} and~\ref{prop:rvolMDP} apply, and so does Corollary~\ref{coro:explicitrate}. Again, Theorem~\ref{thm:nonconv} guarantees that the LDP and MDP still hold when $K_2$ is replaced with the non-convolution fractional Brownian motion kernel.
Gassiat~\cite{Gassiat19} showed that, if~$\rho\le0$, then the stock price process is a true martingale,
ensuring that Assumption~\ref{assu:Martingale} holds, and implied volatility asymptotics thus follow from Corollary~\ref{coro:IV}.

%%%%%%%%%%%%%%%%%%%%%%%%%%%%%%%%%%%%%%%%%%
\subsubsection{Rough Heston}\label{sec:rHeston}
As introduced in~\cite{ER19} the rough Heston model fits into the framework of~\eqref{eq:rvolmodel} with $K_1(t)=K_2(t)= t^{H-\half}/\Gamma(H+\half)$, for $H\in(0,\half)$, $y_0>0$, $\Sigma(y)=y$, $\bb(y)=\kappa\big(\theta-y\big)$, $\kappa>0,\,\theta\ge0$ and $\zeta(y)=\xi \sqrt y$, $\xi\in\bR^d$. 
Linear growth and local H\"older continuity of the coefficients clearly hold.
The weak existence and uniqueness was proved in~\cite{ALP17}, however the square-root coefficient brings an issue for pathwise uniqueness of the SVE. 
We assume here that there exists a set~$\cU$ of coefficients $(H,\kappa,\theta,\xi,\rho,y_0)$ such that pathwise uniqueness indeed stands. 
The only known result so far is due to~\cite{YW70} in the smooth case $H=\half$. We also recall that pathwise uniqueness was proved for~$\zeta(y)= y^\gamma$ where~$\gamma>\frac{1}{2H+1}$ in~\cite{MS15}, but does not encompass the square root case.
Therefore Assumption~\ref{assu:rvolLDPSmallTime} holds in those two cases.
On a heuristic note remark that, even if pathwise uniqueness fails, there is a unique candidate for~$\cG^\ep$ since there exists a unique strong solution until the first hitting time of zero. The issue is it may not satisfy the SVE anymore after that time, but should be consistent for small-time LDP. 

Moreover, uniqueness of the limit equation~\eqref{eq:LDPlimitrvol} only holds up to first hitting time of zero. Hence we will make use of the uniqueness relaxation presented in Section~\ref{sec:WeakAbstract} and similar arguments as in Example~\ref{ex:relax} to prove that Assumption~\ref{assu:relax} holds. The suggested rate function~\eqref{eq:raterelax} reads now
\begin{align}
\label{eq:LambdaHeston}
    I(\phi,\vphi) = \inf\bigg\{ \half \int_0^t \left(u_t^2 + v_t^2\right) \dt:  u,v\in L^2, \,
     \phi_t= \int_0^t \sqrt{\vphi_s} \, \big( \rrho u_s + \rho v_s \big) \ds, 
    \, \vphi_t = \II_{y_0}^{H+\half}(\xi \sqrt{\vphi} v) \bigg\}.
\end{align}
We emphasise that $\vphi$ above solves the Volterra equation
\begin{equation}
    \vphi_t = y_0 + \xi \int_0^t \frac{(t-s)^{H-\half}}{\Gamma(H+\half)} \sqrt{\vphi_s} v_s \ds, \quad \text{for all } t\in\bT.
\label{eq:rHvphi}
\end{equation}
\begin{lemma}
\label{lemma:lebesguezero}
Let~$v\in L^2$ and~$\vphi$ satisfying the Volterra equation~\eqref{eq:rHvphi}
with $H\in(0,\half)$. Then
the set $\cD:= \{t\in[0,T]: \vphi_t>0 \}$ has Lebesgue measure $T$. 
\end{lemma}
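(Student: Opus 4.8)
The plan is to argue by contradiction (the case $\xi=0$ being trivial, since then $\vphi\equiv y_0>0$, so assume $\xi\neq 0$). Since the right-hand side of~\eqref{eq:rHvphi} only makes sense for a nonnegative $\vphi$, and $\vphi$ is continuous — indeed $\alpha$-H\"older for every $\alpha<H$ by Remark~\ref{rem:tightphi} — with $\vphi_0=y_0>0$, the complement $\cZ:=[0,T]\setminus\cD=\{t\in[0,T]:\vphi_t=0\}$ is closed and contained in $[\delta,T]$ for some $\delta>0$; the claim is that $\abs{\cZ}=0$, so suppose $\abs{\cZ}>0$. Put $g:=\sqrt\vphi\,v$, which lies in $L^2([0,T])$ (a bounded function times $v\in L^2$) and vanishes a.e.\ on $\cZ$ because $\sqrt\vphi$ does, so that~\eqref{eq:rHvphi} reads $\vphi_t=y_0+\xi\,\II^{H+\half}g(t)$. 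Applying $\II^{\half-H}$ together with the semigroup property $\II^{\half-H}\II^{H+\half}=\II^{1}$ gives the auxiliary identity $\II^{\half-H}(\vphi-y_0)=\xi\int_0^\cdot g(s)\ds$, whose right-hand side is absolutely continuous with derivative $\xi g$ (hence vanishing a.e.\ on $\cZ$); this relates the fractional integral of $\vphi-y_0$ to the well-behaved primitive of $g$.

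Next I would fix a Lebesgue density point $t_0$ of $\cZ$; closedness gives $t_0\in\cZ$, so $\vphi_{t_0}=0$, and $\abs{\cZ\cap(t_0-h,t_0+h)}=2h(1+o(1))$ as $h\downarrow0$. The key structural observation is that, for every $b\in(0,t_0)$, the memory term $F_b(t):=\int_0^b(t-s)^{H-\half}g(s)\ds$ extends to a real-analytic function of $t$ on $(b,+\infty)$ — one may differentiate under the integral arbitrarily often because $s\le b<t$ keeps $(t-s)^{H-\half-k}$ bounded, and the Taylor series around any $t^{\ast}>b$ converges — and, using crucially that $H<\half$ so that $(t-s)^{H-\half}\to 0$, that $F_b(t)\to 0$ as $t\to+\infty$. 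For $\eta>0$ small and $t\in\cZ\cap(t_0-\eta,t_0+\eta)$ one then has
\[
0=\vphi_t=y_0+\frac{\xi}{\Gamma(H+\half)}\left(F_{t_0-\eta}(t)+\int_{t_0-\eta}^{t}(t-s)^{H-\half}g(s)\ds\right),
\]
and bounding the last integral via $\vphi_s\le C\abs{s-t_0}^{\alpha}$ (valid since $\vphi_{t_0}=0$ and $\vphi$ is $\alpha$-H\"older) together with Cauchy--Schwarz by $O(\eta^{H+\alpha/2})$ uniformly in such $t$ shows that $F_{t_0-\eta}$ stays within $o(1)$ of the nonzero constant $-y_0\,\Gamma(H+\half)/\xi$ on the positive-measure set $\cZ\cap(t_0-\eta,t_0+\eta)$.

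In the sub-case where $\cZ$ has nonempty interior the contradiction is immediate: if $\vphi\equiv 0$ on some $[a,a+\ep]\subset\cZ$ with $a>0$, then $g\equiv 0$ there, so $0=\vphi_t=y_0+\xi\,F_a(t)/\Gamma(H+\half)$ for $t\in[a,a+\ep]$, i.e.\ $F_a\equiv -y_0\,\Gamma(H+\half)/\xi$ on $(a,a+\ep)$; by real-analyticity this persists on all of $(a,+\infty)$, contradicting $F_a(t)\to 0$ since $y_0>0$. The main obstacle — the heart of the proof — is the general case, where $\cZ$ may be a nowhere-dense set of positive measure: one must then upgrade the previous estimate into a quantitative analytic-rigidity statement, combining the analyticity of $F_{t_0-\eta}$, the derivative bounds $\abs{F_{t_0-\eta}^{(k)}(t)}\le C_k\,\eta^{H-\half-k}\norm{g}_{L^1}$ on $[t_0,t_0+\eta]$ (valid because $t-s\ge\eta$ there), the density-one property of $\cZ$ at $t_0$, and the absolutely continuous primitive identity of the first step, so as to exclude a real-analytic function that remains $o(1)$-close to a fixed nonzero constant on a subset of almost full measure of $(t_0-\eta,t_0+\eta)$ yet decays at infinity. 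It is exactly here that the hypothesis $H<\half$ — equivalently, the genuine singularity of the kernel and the decay of $F_b$ — is indispensable.
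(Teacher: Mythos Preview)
Your treatment of the sub-case where $\cZ$ contains an interval is clean and correct: the memory term $F_a(t)=\int_0^a(t-s)^{H-\half}g(s)\,\D s$ is real-analytic on $(a,\infty)$, and for $H<\half$ it decays to zero at infinity, so it cannot be identically equal to the nonzero constant $-y_0\,\Gamma(H+\half)/\xi$ on an open interval. The paper reaches the same sub-conclusion by an entirely different, more structural route: it uses the resolvent of the first kind $L(\D t)=\frac{t^{-H-1/2}}{\Gamma(\half-H)}\D t$ of the kernel and an identity from Abi Jaber--Larsson--Pulido to show that $y_0+(\Delta_h K\ast\D z)(t)>0$ for every $t$ and $h>0$; if $\vphi$ vanished on $[t,t+h]$ one would get $\vphi_{t+h}=y_0+(\Delta_h K\ast\D z)(t)>0$, a contradiction. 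Your analyticity argument is more elementary and self-contained, while the resolvent identity is reusable for general completely monotone kernels.

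The genuine gap is the nowhere-dense case, which you flag but do not resolve. ``Analytic rigidity'' concerns \emph{exact} equality on a set with an accumulation point; it says nothing about a real-analytic function being within $o(1)$ of a constant on a set of density close to one. Your derivative bounds $\lvert F_{t_0-\eta}^{(k)}(t)\rvert\le C_k\,\eta^{H-\half-k}\lVert g\rVert_{L^1}$ blow up as $\eta\downarrow 0$, so they give no control of oscillation of $F_{t_0-\eta}$ on the length-$\eta$ scale your density-point argument requires, and the decay $F_{t_0-\eta}(t)\to 0$ as $t\to\infty$ imposes no local constraint near $t_0$. As written, nothing prevents $F_{t_0-\eta}$ from oscillating and hitting the required constant only on $\cZ\cap(t_0-\eta,t_0+\eta)$. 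It is worth noting that the paper's proof also concludes immediately after excluding intervals (``Hence no such interval exists and the claim follows'') and does not address the nowhere-dense positive-measure scenario either; your identification of this as the crux is on point, but neither your proposal nor the paper's argument, as they stand, closes it.
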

\begin{proof}
We follow some arguments in the proof of~\cite[Theorem 3.6]{ALP17}. Let us drop the subscript in the kernel and write it~$K$ for clarity, and introduce its resolvent of the first kind~$L(\dt):=\frac{t^{-H-\half}}{\Gamma(\half-H)}\dt$~\cite[Definition 5.5.1]{Gripenberg90}.
Moreover, for $h>0$, define $\Delta_h K(t):= K(t+h)$ and for every measurable function $f$ on~$\bR_+$ and measure~$g$ on~$\bR_+$, $(f\ast g)(t) := \int_0^t f(t-s) \D g(s)$.
It is proved, in~\cite[Equation~(3.9)]{ALP17}, that $\Delta_h K\ast L$ is non-decreasing but in fact in this special (rough) case, it is strictly increasing. Indeed, the authors show that in the general case, for all $0\le s \le t \le T$,
\[
(\Delta_h K\ast L)(t) - (\Delta_h K\ast L)(s) = \int_0^h K(h-u) \big( L(s+\du) - L(t+\du) \big),
\]
which is positive because $K>0$ and $L$ is decreasing. Furthermore $K$ is decreasing and~$L>0$ thus 
\[
0 < (\Delta_h K\ast L)(t) < (K\ast L)(t) = 1,
\]
where the equality holds by definition.
Let $\vphi_t= y_0+ \int_0^t K(t-s) \xi\sqrt{\vphi_s} v_s \ds=: y_0+ (K\ast z)(t)$, where~$z$ is trivially a semimartingale, hence from~\cite[Equation (2.15)]{ALP17}:
\begin{align*}
    y_0 + (\Delta_h K \ast \D z)(t) 
    = \Big( 1 - (\Delta_h K \ast L)(t) \Big) y_0
    + \Big(\Delta_h K\ast L \Big)(0) \vphi_t 
    + \Big( \D (\Delta_h K \ast L) \ast\vphi\Big)(t),
\end{align*}
which is strictly positive because because~$y_0,\vphi\ge0$ and the two lines before. Now let us suppose there exists an interval~$[t,t+h]\subset\bT$ on which~$\vphi=0$. Then 
\[
\vphi_{t+h} =y_0 + \int_0^{t} K(t+h-s)\sqrt{\vphi_s} v_s \ds = y_0 + (\Delta_h K \ast \D z)(t) >0,
\]
which is a contradiction. Hence no such interval exists and the claim follows.
\end{proof}
\begin{remark}
This argument works for any rough kernel but not for the diffusion case~$H=\half$. We refer to~\cite[Proposition 3.3]{Donati04} for the latter. 
\end{remark}
The previous lemma allows to invert the integrals as showed in the proof of Corollary~\ref{coro:explicitrate} and yields a more explicit form for~$I$:
\begin{equation}
I(\phi,\vphi) = \int_0^T \frac{\one_{\vphi_t>0}}{2 \rrho^2 \vphi_t} \left(  \dot\phi_t^2
     - 2\rho \dot\phi_t \frac{\DD^{H+\half} (\vphi-y_0)(t)}{\xi} + \left( \frac{\DD^{H+\half} (\vphi-y_0)(t)}{\xi} \right)^2 \right) \dt, 
\label{eq:rateHestonexplicit}
\end{equation}
if the integral is well defined, $\phi\in \Aco$, $\vphi \in \II_{y_0}^{H+\half}(L^1)$,
and~$I=+\infty$ otherwise. 
We can now prove the following:
\begin{lemma}
Let~$H\in(0,\half)$, then the functional $I$ satisfies Assumption~\ref{assu:relax}.
\label{lemma:heston}
\end{lemma}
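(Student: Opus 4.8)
The plan is to adapt the path-perturbation of Example~\ref{ex:relax} (and of \cite[Proposition~3.3]{Donati04}) to the rough setting, adding to~$\vphi$ a small fractional-integral bump so that the perturbed volatility path is strictly positive, which restores uniqueness of the controlled equation. Fix~$\delta>0$ and a pair~$(\phi,\vphi)$ (we identify the element of~$\cW^3$ with its two non-trivial components, the third being~$\equiv0$ throughout and playing no role) with~$I(\phi,\vphi)<\infty$; by~\eqref{eq:rateHestonexplicit} this forces~$\phi\in\Aco$ and~$\vphi\in\II^{H+\half}_{y_0}(L^1)$, and, since~$\{\vphi_t=0\}$ is Lebesgue-null by Lemma~\ref{lemma:lebesguezero}, the infimum in~\eqref{eq:LambdaHeston} is attained at
\[
v_s=\frac{\DD^{H+\half}(\vphi-y_0)(s)}{\xi\sqrt{\vphi_s}},\qquad
u_s=\frac1\rrho\left(\frac{\dot\phi_s}{\sqrt{\vphi_s}}-\rho v_s\right),\qquad\text{a.e. }s\in\bT.
\]

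For~$c>0$ I would set~$\phi^\delta:=\phi$ and~$\vphi^\delta:=\vphi+\II^{H+\half}(c)$, i.e.~$\vphi^\delta_t=\vphi_t+ct^{H+\half}/\Gamma(H+\tfrac32)$. As~$\vphi\ge0$ is continuous with~$\vphi_0=y_0>0$, the path~$\vphi^\delta$ is continuous and strictly positive on~$\bT$, hence bounded below by~$\kappa_0:=\min_\bT\vphi^\delta>0$, and~$\normbT{\vphi^\delta-\vphi}=cT^{H+\half}/\Gamma(H+\tfrac32)\le\delta$ for~$c$ small. Writing~$g_c(s):=cs^{H+\half}/\Gamma(H+\tfrac32)$, so that~$\vphi^\delta-y_0=\II^{H+\half}(\xi\sqrt\vphi\,v+c)$, the natural shifted controls are
\[
v^\delta_s:=\frac{\DD^{H+\half}(\vphi^\delta-y_0)(s)}{\xi\sqrt{\vphi^\delta_s}}=\frac{\xi\sqrt{\vphi_s}\,v_s+c}{\xi\sqrt{\vphi^\delta_s}},
\qquad
u^\delta_s:=\frac1\rrho\left(\frac{\dot\phi_s}{\sqrt{\vphi^\delta_s}}-\rho v^\delta_s\right).
\]
Both lie in~$L^2$: indeed~$\sqrt{\vphi_s}/\sqrt{\vphi^\delta_s}\le1$ and, by AM--GM,~$\sqrt{\vphi_s}/(\vphi_s+g_c(s))\le\tfrac12 g_c(s)^{-1/2}$, so the only delicate contribution to~$v^\delta$ is controlled by~$c\,g_c(s)^{-1/2}=\sqrt{c\,\Gamma(H+\tfrac32)}\,s^{-(H+\half)/2}$, which belongs to~$L^2(\bT)$ \emph{precisely because}~$H<\half$; then~$|u^\delta_s|\le\rrho^{-1}\big(|\rrho u_s+\rho v_s|+|\rho|\,|v^\delta_s|\big)\in L^2$.

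I would then show~$\cG^0_{(u^\delta,v^\delta)}=\{(\phi,\vphi^\delta)\}$. Membership is immediate, since~$\xi\sqrt{\vphi^\delta}v^\delta=\DD^{H+\half}(\vphi^\delta-y_0)$ and~$\sqrt{\vphi^\delta}(\rrho u^\delta+\rho v^\delta)=\dot\phi$. For uniqueness, let~$(\tilde\phi,\tilde\vphi)\in\cG^0_{(u^\delta,v^\delta)}$; then~$\tilde\vphi$ is continuous (Remark~\ref{rem:tightphi}) with~$\tilde\vphi_0=y_0\ge\kappa_0$. Setting~$\rho^\ast:=\inf\{t\in\bT:\tilde\vphi_t\le\kappa_0/2\}$, on~$[0,\rho^\ast)$ both~$\tilde\vphi$ and~$\vphi^\delta$ solve~\eqref{eq:rHvphi} with control~$v^\delta$ while staying~$\ge\kappa_0/2$, so the square root is~$(2\kappa_0)^{-1/2}$-Lipschitz there and
\[
|\tilde\vphi_t-\vphi^\delta_t|\le\frac{|\xi|}{\sqrt{2\kappa_0}}\int_0^t\frac{(t-s)^{H-\half}}{\Gamma(H+\half)}\,|v^\delta_s|\,|\tilde\vphi_s-\vphi^\delta_s|\,\ds,\qquad t<\rho^\ast;
\]
the Volterra--Gronwall inequality of Lemma~\ref{lemma:gronwallK} (with~$|v^\delta|\in L^2$) forces~$\tilde\vphi=\vphi^\delta$ on~$[0,\rho^\ast)$, whence by continuity~$\tilde\vphi_{\rho^\ast}=\vphi^\delta_{\rho^\ast}\ge\kappa_0>\kappa_0/2$, contradicting the definition of~$\rho^\ast$ unless~$\rho^\ast>T$; therefore~$\tilde\vphi\equiv\vphi^\delta$ and then~$\tilde\phi=\int_0^\cdot\sqrt{\vphi^\delta_s}(\rrho u^\delta_s+\rho v^\delta_s)\ds=\phi$. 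The same identities show that, as~$\vphi^\delta>0$,~$v^\delta$ is the only control generating~$\vphi^\delta$ and~$u^\delta$ the only one then compatible with~$\phi$, so the infimum defining~$I(\phi,\vphi^\delta)$ is attained at~$(u^\delta,v^\delta)$; by the criterion stated just before Assumption~\ref{assu:relax}, this makes~$(\phi^\delta,\vphi^\delta)$ uniquely characterised.

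It would remain to check~$|I(\phi,\vphi)-I(\phi^\delta,\vphi^\delta)|\le\delta$ for~$c$ small. Since~$\vphi>0$ a.e.,~$v^\delta_s\to v_s$ pointwise a.e.\ as~$c\downarrow0$; expanding~$(v^\delta_s)^2$ into the three terms~$\vphi_sv_s^2/(\vphi_s+g_c(s))$,~$2\sqrt{\vphi_s}v_sc/\big(\xi(\vphi_s+g_c(s))\big)$ and~$c^2/\big(\xi^2(\vphi_s+g_c(s))\big)$, bounding them respectively by~$v_s^2$, by~$|v_s|\,c\,g_c(s)^{-1/2}/|\xi|$ (AM--GM) and by~$c^2g_c(s)^{-1}/\xi^2$, and then invoking~$s\mapsto s^{-(H+\half)}\in L^1(\bT)$ (once more~$H<\half$) together with Cauchy--Schwarz, yields~$\limsup_{c\downarrow0}\norm{v^\delta}_2^2\le\norm{v}_2^2$; Fatou gives the reverse inequality, so~$v^\delta\to v$ in~$L^2$. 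Dominated convergence (dominating function~$|\rrho u_\cdot+\rho v_\cdot|$, using~$\sqrt{\vphi_s}/\sqrt{\vphi^\delta_s}\le1$) then gives~$\dot\phi/\sqrt{\vphi^\delta}\to\rrho u+\rho v$ in~$L^2$, hence~$u^\delta\to u$ in~$L^2$, so~$I(\phi^\delta,\vphi^\delta)=\tfrac12(\norm{u^\delta}_2^2+\norm{v^\delta}_2^2)\to I(\phi,\vphi)$. Picking~$c$ small enough to satisfy both the sup-norm and the rate bounds concludes. The main obstacle is precisely the~$L^2$-integrability of the shifted control~$v^\delta$ near the origin, which is harmless exactly because~$H<\half$ (through the integrability of~$s\mapsto s^{-(H+\half)}$ and through Lemma~\ref{lemma:lebesguezero}), and which is the reason the diffusion case~$H=\half$ must be treated separately via~\cite{Donati04}.
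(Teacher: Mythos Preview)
Your argument is correct and uses the same perturbation as the paper: $\vphi^\delta=\vphi+c\,t^{H+\half}$ (up to a harmless normalising constant), so that the perturbed volatility is strictly positive. From there the paper establishes convergence of the rate function by applying dominated convergence directly to the explicit integrand~\eqref{eq:rateHestonexplicit}, whereas you work at the level of the controls and show $v^\delta\to v$ and $u^\delta\to u$ in~$L^2$; both routes give the same conclusion, and yours has the merit of making the unique characterisation of $(\phi,\vphi^\delta)$ explicit via the Lipschitz/stopping-time argument, a point the paper leaves implicit. Your bound on $v^\delta$ through $\vphi^\delta\ge g_c$ is also cleaner than the paper's one-line inequality. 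One minor imprecision: Lemma~\ref{lemma:gronwallK} as stated is for kernels of the form $|K(t-s)|^2$ and does not literally cover $(t-s)^{H-\half}|v^\delta_s|$; what you really invoke is the general Volterra--Gronwall inequality behind that lemma (e.g.\ \cite[Theorem~9.8.2]{Gripenberg90}), which applies here since the product kernel is of $L^1$ type by Cauchy--Schwarz.
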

\begin{proof}
Note that any solution~$\vphi$ of~\eqref{eq:rHvphi} is non-negative. 
Let $(\phi,\vphi)$ be such that $I(\phi,\vphi)$ is finite. Then, for each~$\delta>0$, define $\vphi^\delta_t := \vphi_t + \delta t^{H+\half}$
such that~$\vphi^\delta$ is strictly positive. 
Therefore from definition~\eqref{eq:RiemmanLiouville} we have:
\begin{align*}
\DD^{H+\half}(\vphi^\delta-y_0)(t) - \DD^{H+\half}(\vphi-y_0)(t) 
& = \frac{1}{\Gamma(\half-H)} \frac{\D}{\D t} \int_0^t (t-s)^{-H-\half} (\vphi^\delta_s-\vphi_s) \ds \\
& = \frac{\delta}{\Gamma(\half-H)} \frac{\D}{\D t} \int_0^t  (t-s)^{-H-\half} s^{H+\half} \ds 
= \delta \, \frac{\pi \left(H+\half\right)}{\Gamma(\half-H) \cos(\pi H)},
\end{align*}
which entails convergence as~$\delta$ goes to zero, uniformly on $\bT$.
%where we used~\cite[3.196-3]{GR07} to compute the integral and~$\BB$ is the beta function defined in~\cite[8.380]{GR07}. 
Now define the control
\[
v_t := \frac{\DD^{H+\half}(\vphi-y_0)(t)}{\sqrt{\vphi_t}} \one_{\vphi_t>0},
\]
and $v$ belongs to~$L^2$ since $I(\phi, \varphi)$ is finite.
Then for each~$\delta>0$ the control~$v^\delta$ defined as
\[
v^\delta_t := \frac{\DD^{H+\half}(\vphi^\delta-y_0)(t)}{\sqrt{\vphi^\delta_t}} \le \frac{\DD^{H+\half}(\vphi^\delta-y_0)(t)}{\sqrt{\vphi_t}}
\]
is also in~$L^2$ because~$\one_{\vphi_t>0}=1$ almost everywhere.
Furthermore, for all $t\in\cD$,
$
\lim_{\delta\downarrow 0} \big(\vphi^\delta_t \big)^{-1} = (\vphi_t)^{-1}$ and therefore $
\lim_{\delta\downarrow 0} v^\delta_t = v_t$.
Let~$P(\vphi)$ denote the term between brackets in~\eqref{eq:rateHestonexplicit} divided by~$2\rrho^2$, which is non-negative by design since it corresponds to~$\vphi (u^2+v^2)$.
Therefore, by Lemma~\ref{lemma:lebesguezero},
\begin{align*}
    I(\phi,\vphi)-I(\phi,\vphi^\delta) 
    = \int_\cD \left(  \frac{1}{\vphi_t}- \frac{1}{\vphi^\delta_t}\right) P(\vphi)_t \dt + \int_\cD \frac{1}{\vphi^\delta_t} \Big( P(\vphi)_t - P(\vphi^\delta)_t \Big) \dt,
\end{align*}
where the first integrand is smaller than $ P(\vphi)_t/\vphi_t$ for all $t\in\cD$ and this upper bound belongs to~$L^1$ by assumptions. 
Hence the dominated convergence theorem implies that the first integral goes to zero.
From the calculations above we deduce that $P(\vphi)_t - P(\vphi^\delta)_t$ tends to zero uniformly as~$\delta$ goes to zero, hence the second integrand converges pointwise and, for~$\delta$ small enough, is dominated by $P(\vphi)/\vphi$. A second application of DCT yields convergence of the integral, and the claim follows.
\end{proof}
Therefore the large and moderate deviations from Propositions~\ref{prop:rvolLDP} and~\ref{prop:rvolMDP} apply if the coefficients belong to~$\cU$ and~$H\in(0,\half)$. Observe that Proposition~\ref{prop:rvolMDP}{\bf (M3)} agrees with~\cite[Section 3.5]{FGS19}, although the routes taken differ significantly.
El Euch and Rosenbaum~\cite[Appendix B]{ER18} showed that Assumption~\ref{assu:Martingale} is satisfied, 
and the implied volatility behaviour thus follows from Corollary~\ref{coro:IV}.

%%%%%%%%%%%%%%%%%%%%%%%%%%%%%%%%%%%%%%%%
%%%%%%%%%%%%%%%%%%%%%%%%%%%%%%%%%%%%%%%
\subsubsection{Multi-factor rough Bergomi}
Let~$W$ be an~$\bR^{m+1}$-Brownian motion,~$\Zm :=\left(Z^{(1)}, \cdots, Z^{(m)} \right)^\dagger$ where 
\[
Z^{(j)}_t := \int_0^t K^{(j)}(t-s)\D W^{(j)}_s,
\]
and we allow~$K^{(j)}\in L^2(\bT,\bR_+),\,j\in\llbracket 1, m\rrbracket,$ to be homogeneous of different degrees~$H_j-\half$ with~$H_j\in(0,1]$. Therefore, the variance of~$\Zm$ is proportional to~$\Am_t:=\left(t^{2H_1}/(2H_1),\dots, t^{2H_m}/(2H_m)\right)^\dagger$, for all $t\in\bT$.
Assume without loss of generality that the $H_j$ are ordered by increasing values, then we will design the rescaling at the speed~$\ep^{-2H_1}$. Denote~$m^\star:=\max\{j\in\llbracket 1,m\rrbracket: H_j =H_1\}$.
Let~$\cU$ and $\cV$ be~$m$-dimensional square matrices, and $\Ym$ an $m$-dimensional process defined for all $t\in\bT$ by
\[
\Ym_t := y_0 + \cU \Zm_t - \half \cV \Am_t, \quad y_0\in \bR^m.
\]
The log-price reads
\[
    X_t = -\frac{1}{2m} \int_0^t \sum_{i=1}^m  \exp\left(Y^{(i)}_s\right) \ds + \int_0^t \sqrt{\frac{1}{m}\sum_{i=1}^m  \exp\left( Y^{(i)}_s\right)} \D B_s,
\]
where~$B= \rrho W^{(m+1)} + \sum_{j=1}^{m} \rho_j W^{(j)}$, $\rrho^2 + \sum_{i=1}^m \rho_i^2 =1$.
The rescaling $X^\ep_t= \ep^{H_1-\half}X_{\ep t},\Ym^\ep_t := \Ym_{\ep t}$ yields
\begin{align*}
\left\{
\begin{array}{rl}
    &\displaystyle X_t^\ep = -\frac{\ep^{H_1+\half}}{2m} \int_0^t \sum_{i=1}^m \exp\left(Y^{\ep,(i)}_s\right) \ds + \ep^{H_1} \int_0^t \sqrt{\frac1m \sum_{i=1}^m \exp\left(Y^{\ep,(i)}_s\right)} \D B_s, \\
    &\displaystyle Y^{\ep,(i)}_t = y_0^{(i)} + \sum_{j=1}^m \left(\ep^{H_j} \cU_{ij} Z_t^{(j)} - \cV_{ij}\, \frac{(\ep t)^{2H_j}}{2H_j}\right)
    %= y_0^{(i)} + \ep^{H_1} \sum_{j=1}^m  \LL_{ij}^\ep Z_t^{(j)} - a_i (\ep t)^{2H_1}
    , \quad \text{for all } i\in\llbracket 1,m\rrbracket.
    \end{array}
\right.
\end{align*}
As we will shift each BM by~$\ep^{-H_1} \int v^{(j)}$, we notice that~$\ep^{H_j-H_1} \cU_{ij}$ goes to zero if~$H_j>H_1$, i.e. if $j>m^\star$. It means that the roughest component(s) (the one(s) with $H_1$) will outweigh the others, and only the former will make a contribution to the rate function.

Although similar to its one-dimensional counterpart, this model does not fit into the framework of~\eqref{eq:rvolmodel}.
Regarding the assumptions of Theorems~\ref{thm:LDP}, we only check {\bf H3b} and Assumption~\ref{assu:Gammabound} because the others are standard and similar to the one-dimensional case. Clearly $(Y^{\ep,(1)},\cdots,Y^{\ep,(m)})$ is an autonomous subsystem. As a Gaussian process,~$\cU\Zm$ has exponential moments of all orders and for all $N>0$, $j\in\llbracket 1,m\rrbracket$ and $v^{(j)}\in \cA_N$:
\[
\int_0^t K^{(j)}(t-s) v^{(j)}_s \ds \le \sqrt N \lVert K^{(j)} \lVert_2 \quad \text{almost surely},
\]
thus~$\exp\big(Y^{\ep,(i),v} \big)\in L^p(\Omega)$ for all~$p\ge1$. Therefore the bound~\eqref{eq:boundgsto} and Assumption~\ref{assu:Gammabound} are satisfied. This estimate also checks that~\eqref{eq:Xibound} and thus {\bf H8} stand. Since~$\vartheta_\ep=\ep^{H_1}$, we define for the moderate deviations regime $h_\ep=\ep^{\beta}, \beta\in(0,H_1)$.
\begin{corollary}
The pathwise LDP and MDP hold.
\begin{itemize}
    \item $(X^\ep,Y^\ep)\sim\LDP\left(I,\ep^{-2H_1}\right)$ where for all~$\phi\in\cW$ and~$\vphi\in\cW^m$:
\begin{align*}
\hspace{-1cm}& I(\phi,\vphi)=\inf \Bigg\{ \half \int_0^T \left(u_t^2 + \abs{v_t}^2\right) \dt : \quad u\in L^2(\bT,\bR), v\in L^2(\bT,\bR^m), \\
&\phi_t = \int_0^t \left(\frac1m \sum_{i=1}^m \E^{\vphi^{(i)}_s} \right)^\half \left(\rrho u_s + \sum_{j=1}^m \rho_j v_s^{(j)}  \right) \ds ,  \quad 
\vphi^{(i)}_t = y_0^{(i)} + \sum_{j=1}^{m^\star} \cU_{ij} \int_0^t K^{(j)}(t-s)  v_s^{(j)}  \ds
\Bigg\}.
\end{align*}
    \item $(X^\ep,Y^\ep)\sim\MDP\left(\Lambda,\ep^{-2\beta}\right)$ where for all~$\phi\in\cW$ and $\vphi\in\cW^m$:
\begin{align}\label{eq:multirBMDP}
  \hspace{-1cm} & \Lambda(\phi,\vphi)= \inf \Bigg\{ \half \int_0^T \left( u_t^2 + \abs{v_t}^2 \right) \dt :
    \quad u\in L^2(\bT,\bR), v\in L^2(\bT,\bR^m),  \\
&\phi_t = \int_0^t \left(\frac1m \sum_{i=1}^m \E^{y_0^{(i)}} \right)^\half \left(\rrho u_s + \sum_{j=1}^m \rho_j v_s^{(j)} \right) \ds, 
\quad \vphi^{(i)}_t = \sum_{j=1}^{m^\star}  \cU_{ij} \int_0^t  K^{(j)}(t-s) v_s^{(j)}  \ds
\Bigg\}.\nonumber
\end{align}
\end{itemize}
\end{corollary}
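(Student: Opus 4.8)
The plan is to embed this model into the framework of~\eqref{eq:Xep}, exactly as in the proofs of Propositions~\ref{prop:rvolLDP} and~\ref{prop:rvolMDP}, to verify the hypotheses of Theorems~\ref{thm:LDP} and~\ref{thm:MDP}, and then to descend to $(X^\ep,Y^\ep)$ by the contraction principle. Following the recipe of Example~\ref{ex:Bergomi}, I would introduce one auxiliary coordinate per kernel: set $d=m+1$, $\vartheta_\ep=\ep^{H_1}$, write $W=(W^{(1)},\dots,W^{(m+1)})$, and track the augmented process $\Xm^\ep:=\big(X^\ep,\widehat Z^{\ep,(1)},\dots,\widehat Z^{\ep,(m)}\big)$ with $\widehat Z^{\ep,(j)}_t:=\ep^{H_j}\int_0^t K^{(j)}(t-s)\D W^{(j)}_s$. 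This solves~\eqref{eq:Xep} with the diagonal kernel $K=\mathrm{Diag}\big(1,K^{(1)},\dots,K^{(m)}\big)$ (each $K^{(j)}(t)=K^{(j)}(1)t^{H_j-\frac12}$ satisfies Assumption~\ref{assu:K} by the computation of Section~\ref{sec:SmallTimeGen}, as does the identity kernel), with $b_\ep$ vanishing on the $\widehat Z$-rows and equal to $-\tfrac12\ep^{H_1+\frac12}\sum_i\exp\big(g^{(i)}_\ep(t,\hat z)\big)$ on the $X$-row, and with $\sigma_\ep$ whose $\widehat Z^{(j)}$-row carries the single entry $\ep^{H_j-H_1}$ in column $j$ and whose $X$-row is $\big(\rho_1,\dots,\rho_m,\rrho\big)\sum_i\exp\big(\tfrac12g^{(i)}_\ep(t,\hat z)\big)$, where $g^{(i)}_\ep(t,\hat z):=y_0^{(i)}+\sum_j\LL_{i,j}\hat z^{(j)}-a_i(\ep t)^{2H_1}$. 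A direct check shows that $g^{(i)}_\ep(t,\widehat Z^\ep_t)=Y^{\ep,(i)}_t$, so the first coordinate of $\Xm^\ep$ is indeed $X^\ep$.

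Next I would verify the hypotheses. {\bf H1} holds since $\Xm^\ep_0\equiv(0,\dots,0)$; {\bf H2} holds with $b\equiv0$ (all prefactors $\ep^{H_1+\frac12}$, $a_i(\ep t)^{2H_1}$ vanish) and $\sigma$ the $\ep\to0$ limit of $\sigma_\ep$, for which $\ep^{H_j-H_1}\to\one_{\{j\le m^\star\}}$ and $g^{(i)}_\ep(t,\hat z)\to\overbar{g}^{(i)}(\hat z):=y_0^{(i)}+\sum_{j\le m^\star}\LL_{i,j}\hat z^{(j)}$ (continuous, $t$-independent); {\bf H3b} holds with autonomous subsystem $\{\widehat Z^{\ep,(j)}\}_j$ ($\Upsilon=\llbracket2,m+1\rrbracket$), whose coefficients are constant — hence of linear growth uniformly in $\ep$ and independent of the $X$-coordinate — while the $X$-row coefficients lie in $\GUG$ with $\Gamma(\hat z):=1+\sum_i\exp\big(\tfrac12|y_0^{(i)}|+\tfrac12\sum_j|\LL_{i,j}|\,|\hat z^{(j)}|\big)$; {\bf H4} is immediate since $X^\ep$ and every $\widehat Z^{\ep,(j)}$ are explicit, so pathwise uniqueness is trivial, and for the same reason the limiting equation~\eqref{eq:LDPlimit} (linear in $\widehat Z$, no self-feedback in $X$) is uniquely solvable, making Assumption~\ref{assu:relax} automatic. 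Assumption~\ref{assu:Gammabound} is the estimate recorded just before the statement: for $v\in\cA_N$ the controlled coordinate $\widehat Z^{\ep,v,(j)}_t=\ep^{H_j-H_1}\int_0^t K^{(j)}(t-s)v^{(j)}_s\ds+\ep^{H_j}\int_0^t K^{(j)}(t-s)\D W^{(j)}_s$ is a term bounded by $\sqrt N\norm{K^{(j)}}_2$ (Cauchy--Schwarz) plus a centred Gaussian of variance $\le\ep^{2H_j}\norm{K^{(j)}}_2^2$, so $\exp(c\,\widehat Z^{\ep,v,(j)}_t)$ has all moments bounded uniformly over $t\in\bT$, $v\in\cA_N$, $\ep\in(0,1]$, giving~\eqref{eq:boundgsto}, and~\eqref{eq:boundgdet} is identical along deterministic controls. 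In the moderate regime with $h_\ep=\ep^{-\beta}$, $\beta\in(0,H_1)$, additionally {\bf H5} is trivial ($b\equiv0$), {\bf H6} holds since $\sigma$ is built from exponentials (locally Lipschitz, hence locally $\delta$-H\"older for any $\delta\le1$), {\bf H7} holds since $\Xm^\ep_0=x_0$, and {\bf H8} follows from $|b_\ep(t,\hat z)|\le\ep^{H_1+\frac12}\Xi(\hat z)$ with $\Xi$ of exponential growth and $\nu_\ep=\ep^{H_1+\frac12}$, so that $\nu_\ep(\vartheta_\ep h_\ep)^{-1}=\ep^{\frac12+\beta}\to0$, with~\eqref{eq:Xibound} again from the Gaussian bound.

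Theorems~\ref{thm:LDP} and~\ref{thm:MDP} then furnish an LDP at speed $\ep^{-2H_1}$ and an MDP at speed $\ep^{-2\beta}$ for $\Xm^\ep$, with limiting equations $\widehat\zeta^{(j)}_t=\one_{\{j\le m^\star\}}\int_0^t K^{(j)}(t-s)v^{(j)}_s\ds$ and $\phi_t=\int_0^t\big[\sum_i\E^{\overbar{g}^{(i)}(\widehat\zeta_s)/2}\big]\big[\rrho u_s+\sum_j\rho_j v^{(j)}_s\big]\ds$ ($u$ controlling $W^{(m+1)}$, $v^{(j)}$ controlling $W^{(j)}$), the MDP case being identical with $\overbar{g}^{(i)}$ frozen at the constant $y_0^{(i)}$ (as $\overbar X\equiv x_0$ and $\nabla b\equiv0$ in~\eqref{eq:MDPlimit}). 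Since $Y^{\ep,(i)}_t=y_0^{(i)}+\sum_j\LL_{i,j}\widehat Z^{\ep,(j)}_t-a_i(\ep t)^{2H_1}$, the pair $(X^\ep,Y^\ep)$ is the image of $\Xm^\ep$ under the fixed continuous affine map $G(x,\hat z):=\big(x,(y_0^{(i)}+\sum_j\LL_{i,j}\hat z^{(j)})_i\big)$, up to the deterministic, uniformly $O(\ep^{2H_1})$ shift $-(a_i(\ep t)^{2H_1})_i$, which stays $o(1)$ even after the MDP rescaling; hence $(X^\ep,Y^\ep)$ and $G(\Xm^\ep)$ are exponentially equivalent at both speeds, and applying the contraction principle~\cite[Theorems 4.2.1 and 4.2.13]{DZ10} to $G$, substituting $\widehat\zeta^{(j)}_t=\int_0^t K^{(j)}(t-s)v^{(j)}_s\ds$ back into the rate function (using injectivity of the operators $\mathrm{K}^{(j)}$) and minimising over the fibres of $G$, yields the stated $I$ and $\Lambda$. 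The step I expect to be the main obstacle is this last identification: carrying through the contraction that the slower factors ($H_j>H_1$, i.e.\ $j>m^\star$) decouple from the limiting volatility equation — their limiting diffusion coefficient $\ep^{H_j-H_1}$ vanishes, so $\widehat\zeta^{(j)}\equiv0$ — while remaining admissible, at their own quadratic $L^2$-cost, in the equation for $\phi$; and handling the infimum over the fibres of $G$ when $\LL$ (restricted to the roughest columns) is not injective. Everything else — {\bf H1}, {\bf H2}, {\bf H4}, Assumptions~\ref{assu:K} and~\ref{assu:relax} — is routine, and the only quantitative input is the Gaussian-plus-Cauchy--Schwarz bound already recorded before the statement.
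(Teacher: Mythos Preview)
Your proposal is correct and follows the same approach as the paper, whose own proof is the single line ``direct application of Theorem~\ref{thm:LDP} and Theorem~\ref{thm:MDP}''; you have simply made the embedding into~\eqref{eq:Xep} via the auxiliary coordinates $\widehat Z^{\ep,(j)}$ and the subsequent contraction to $(X^\ep,Y^\ep)$ explicit, whereas the paper leaves these implicit in the discussion preceding the statement. One minor slip: the pointwise limit $\overbar g^{(i)}(\hat z)$ of $g^{(i)}_\ep$ should retain the full sum $\sum_{j=1}^m \LL_{i,j}\hat z^{(j)}$ (only the $a_i(\ep t)^{2H_1}$ term vanishes); the truncation to $j\le m^\star$ appears only after substituting the limiting trajectories $\widehat\zeta^{(j)}\equiv0$ for $j>m^\star$, exactly as you correctly note further down, so the final rate functions are unaffected.
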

\begin{proof}
The LDP is a direct application of Theorem~\ref{thm:LDP} and the MDP of Theorem~\ref{thm:MDP}.
\end{proof}
One can also recover the LDP and MDP for~$(X^\ep)_{\ep>0}$ as well as the small-time LDP and MDP by contraction principle, as in Propositions~\ref{prop:rvolLDP} and~\ref{prop:rvolMDP}.

If~$\cU$ is lower triangular (i.e. $\cU_{ij}=0$ for all $i<j$), for instance if it arises from the Cholesky decomposition of a covariance matrix, then for all~$\phi\in\cW,$~$\vphi\in\cW^m$, one can derive the vector $v$ recursively, followed by~$u$. Note that if $m^\star<m$, $\vphi$ may not be attainable by the restrained number of controls $\{v^{(j)}, j\in\llbracket1, m^\star\rrbracket\}$.
\begin{example}
Consider the case $m=2$. Let $K^{(1)}(t)=K^{(2)}(t)=t^{H-\half} / \Gamma(H+\half)$ for $H\in(0,\half)$, hence~$m^\star=2$, and $\cU$ be lower triangular (i.e. $\cU_{12}=0$). Then in the moderate deviations setting $v$ and $u$ are explicit from~\eqref{eq:multirBMDP}:
\begin{align*}
&v^{(1)} = \frac{1}{\cU_{11}} \DD^{H+\half}\left(\vphi^{(1)}\right), 
\qquad v^{(2)} = \frac{1}{\cU_{22}} \DD^{H+\half}\left(\vphi^{(2)}\right) - \frac{\cU_{21}}{\cU_{22}} v^{(1)},\\
& u = \frac{1}{\rrho} \left[\sqrt{2}\,\dot\phi \left\{\exp\left(y_0^{(1)}\right) +\exp\left(y_0^{(2)}\right)\right\}^{-\half} -\rho_1 v^{(1)} - \rho_2 v^{(2)}
\right].
\end{align*}
\end{example}
\begin{remark}
We can similarly consider multidimensional versions of the other models presented in this chapter and derive large and moderate deviation principles. We only work out the computations for the multi-factor rough Bergomi model because it is the most relevant in the literature.
\end{remark}
%%%%%%%%%%%%%%%%%%%%%%%%%%%%%%%%%%%%%%%
%%%%%%%%%%%%%%%%%%%%%%%%%%%%%%%%%%%%%%%%

%%%%%%%%%%%%%%%%%%%%%%%%%%%%%%%%%%%%%%%%%%%%%%%%%%%%%%%%%%%%%%%%%%%%%%%%%%%%%%%%%%%%%%%%%%%%%%%%%%%%%%%%%%%%%%%%%%%%%%%%%%%%%%%%%%%%%%%%%%%%%%%%%%
\subsection{Tail rescaling}\label{sec:TailsEx}
We now investigate tail rescalings, which generally have the form $X^\ep=\ep X$, 
such that an LDP provides asymptotic estimates on $ \bP(X^\ep \ge 1) = \bP(X \ge \ep^{-1})$. 
The MDP for the whole system is not available in this case because~$\overline{Y}:=\lim_{\ep\downarrow0} Y^\ep \equiv0$ hence the limit equation for~$X^\ep$, arising from~\eqref{eq:MDPlimit}, would be independent of the control. Note that the theory does not break down but the rate function is trivial (equals zero at zero and~$+\infty$ everywhere else). Furthermore, the exponential function prevents the study of such a rescaling in the rough Bergomi model. 
%%%%%%%%%%%%%%%%%%%%%%%%%%%%%%%%%%%%%%%
\subsubsection{Rough Stein-Stein}
This model was defined in Section~\ref{sec:rSS}, but with the rescaling $Y^\ep_t:=\ep Y_{t}$ and~$X^\ep_t:= \ep^2 X_{t}$, the system becomes
\begin{align}
\label{eq:rSStail}
\left\{
\begin{array}{rl}
X^\ep_t &= \displaystyle - \half\int_0^t  (Y^\ep_s)^2 \ds + \ep \int_0^t Y^\ep_s \D B_s, \\
Y^\ep_t &= \displaystyle \ep y_0 + \int_0^t \kappa \left(\ep \theta - Y^\ep_s\right)\ds + \ep \int_0^t \xi \frac{(t-s)^{H-\half}}{\Gamma(H+\half)} \D W_s,
    \end{array}
    \right.
\end{align}
where the coefficients are identical to the small-time case. Although the rescaling is different, Assumption~\ref{assu:K} and {\bf H1 - H4} are easily satisfied in a similar way, the limit equation~\eqref{eq:LDPlimit} has a unique solution, and therefore Theorem~\ref{thm:LDP} applies. 
\begin{corollary}\label{cor:TailSS}
The following hold:
\begin{enumerate}[{\bf (L1)}]
    \item $(X^\ep,Y^\ep)\sim\LDP\left(I,\ep^{-2}\right)$ with
    \begin{equation*}
I(\phi,\vphi)= \half \int_0^T \left(u_t^2 + v_t^2\right) \dt, \quad \text{where}\quad
\left\{
\begin{array}{rl}
u & = \displaystyle \frac{1}{\rrho} \left( \frac{\dot\phi}{\vphi} + \half\vphi - \rho v \right)\one_{\vphi\neq0},\\
v & = \displaystyle \frac{1}{\xi} \left(\DD^{H+\half}(\vphi) + \kappa \II^{\half-H}(\vphi) \right),
\end{array}
\right.
\end{equation*}
if $\phi\in\Aco,\, \vphi\in \II^{H+\half}_0$ and infinity otherwise.
    \item $X^\ep\sim\LDP\left(I^X,\ep^{-2}\right)$ with
    $I^X(\phi)=\inf \big\{ I(\phi,\vphi): \vphi\in \II^{H+\half}_0 \big\}$ if~$\phi\in\Ac_0$ and infinity otherwise.
    \item For each $t\in\bT$, $\ep^2 X_t\sim\LDP\left(I^X_t,\ep^{-2}\right)$, where
    $I^X_t(x)=\inf \big\{ I^X(\phi): \phi_t=x \big\}$.
\end{enumerate}
\label{coro:tailSS}
\end{corollary}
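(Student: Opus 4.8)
The plan is to reproduce, in the tail regime, the argument used for the small-time rough Stein--Stein model in Section~\ref{sec:rSS} (equivalently Proposition~\ref{prop:rvolLDP}). First I would recognise~\eqref{eq:rSStail} as an instance of~\eqref{eq:Xep} with $d=3$ and $\vartheta_\ep=\ep$, taking
\[
K(t)=\begin{pmatrix} 1 & 0 & 0 \\ 0 & 1 & t^{H-\half}/\Gamma(H+\half) \\ 0 & 0 & 0 \end{pmatrix},\quad
b_\ep(t,(x,y))=\begin{pmatrix} -\half y^2 \\ \kappa(\ep\theta-y) \\ 0 \end{pmatrix},\quad
\sigma_\ep(t,(x,y))=\begin{pmatrix} \rho\, y & \rrho\, y & 0 \\ 0 & 0 & 0 \\ \xi & 0 & 0 \end{pmatrix},
\]
the trivial third coordinate serving only to route the Riemann--Liouville kernel into the equation for $Y^\ep$, exactly as in~\eqref{eq:smalltime}. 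The verification of the hypotheses then runs parallel to Section~\ref{sec:rSS}, with two differences worth flagging. First, both initial conditions vanish in the limit ($X^\ep_0=0$, $Y^\ep_0=\ep y_0\to0$), so {\bf H1} holds with $x_0=0$, and $b_\ep$ converges pointwise to $b(t,(x,y))=(-\half y^2,-\kappa y,0)^\top$, so the limiting drift of $Y$ is the Ornstein--Uhlenbeck term $-\kappa y$ rather than $0$. Second, {\bf H3a} fails because the first drift component is quadratic; instead $Y^\ep$ is an autonomous subsystem in the sense of Definition~\ref{def:autonomous} with $\Upsilon=\{2\}$ and $\Gamma(y)=1+y^2$, so {\bf H3b} holds, while both bounds of Assumption~\ref{assu:Gammabound} follow at once from the linearity of the equations satisfied by $Y^{\ep,v}$ (resp. by $\vphi\in\cG^0_v$) together with the a priori bound $\norm{v}_2\le\sqrt N$ for $v\in\cA_N$ (resp. $v\in\cS_N$). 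Assumption~\ref{assu:K} holds entrywise (with $\gamma=2H$ for the Riemann--Liouville block, trivially for the constant ones), {\bf H2} is immediate, and {\bf H4} holds since the equation for $Y^\ep$ is linear, hence pathwise unique, and $X^\ep$ is then explicit; finally the limiting equation~\eqref{eq:LDPlimit} has a unique solution (its $Y$-component is a linear Volterra equation and its $X$-component follows by integration), so Assumption~\ref{assu:relax} holds automatically.

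With all hypotheses in place, I would invoke Theorem~\ref{thm:LDP} for the three-dimensional process $(X^\ep,Y^\ep,Z^\ep)$ with $Z^\ep\equiv0$: it satisfies an LDP at speed $\vartheta_\ep^{-2}=\ep^{-2}$ with rate function~\eqref{eq:raterelax}, $\cG^0_v$ being the solution set of~\eqref{eq:LDPlimit}, which here reads
\[
\phi_t=\int_0^t\Big[-\half\vphi_s^2+\vphi_s(\rrho u_s+\rho v_s)\Big]\ds,\qquad
\vphi_t=-\kappa\int_0^t\vphi_s\ds+\xi\,\II^{H+\half}(v)(t),
\]
the control components not entering these equations being discarded with zero cost, as in the proof of Proposition~\ref{prop:rvolLDP}. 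Since the projection $(\phi,\vphi,\psi)\mapsto(\phi,\vphi)$ is continuous, the contraction principle~\cite[Theorem 4.2.1]{DZ10} yields the LDP for $(X^\ep,Y^\ep)$ with rate function $\inf\big\{\half\int_0^T(u_t^2+v_t^2)\dt:\ (\phi,\vphi)\ \text{solves the above}\big\}$.

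It remains to make this explicit by inverting the two limiting equations, exactly as in the proof of Corollary~\ref{coro:explicitrate}. Rewriting the $Y$-equation as $\vphi_t+\kappa\int_0^t\vphi_s\ds=\xi\,\II^{H+\half}(v)(t)$ and applying the fractional derivative $\DD^{H+\half}$ — using $\DD^{H+\half}\II^{H+\half}=\mathrm{id}$ and $\DD^{H+\half}\big(\int_0^\cdot\vphi_s\ds\big)=\II^{\half-H}(\vphi)$ — gives the unique control $v=\frac1\xi\big(\DD^{H+\half}(\vphi)+\kappa\,\II^{\half-H}(\vphi)\big)$, which has finite energy precisely when $\vphi\in\II^{H+\half}_0$; this is the one step that genuinely differs from Section~\ref{sec:rSS}, where the $\kappa\int_0^\cdot\vphi_s\ds$ term is absent. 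Differentiating the $X$-equation gives $\dot\phi_t=-\half\vphi_t^2+\vphi_t(\rrho u_t+\rho v_t)$, so on $\{\vphi_t\neq0\}$ one must take $u_t=\frac1\rrho\big(\dot\phi_t/\vphi_t+\half\vphi_t-\rho v_t\big)$, while on $\{\vphi_t=0\}$ the control $u_t$ is unconstrained and the cost-minimising choice is $u_t=0$ (see~\cite[Remark 2.3]{CDMD15}); substituting these expressions into $\half\int_0^T(u_t^2+v_t^2)\dt$ produces the stated $I$ on the pairs $(\phi,\vphi)$ realisable by some $(u,v)\in L^2$, and $I=+\infty$ elsewhere — in particular off $\Aco\times\II^{H+\half}_0$. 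This proves~{\bf (L1)}; part~{\bf (L2)} then follows by applying the contraction principle to the continuous map $(\phi,\vphi)\mapsto\phi$, giving $I^X(\phi)=\inf\{I(\phi,\vphi):\vphi\in\II^{H+\half}_0\}$ (and $+\infty$ off $\Aco$), and~{\bf (L3)} by a further application to the evaluation map $\phi\mapsto\phi_t$, giving $I^X_t(x)=\inf\{I^X(\phi):\phi_t=x\}$.

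I do not expect a genuine obstacle: the statement is essentially a transcription of Theorem~\ref{thm:LDP} followed by the variational inversion already performed in the small-time case. The two points requiring a little care are, first, checking {\bf H3b} and Assumption~\ref{assu:Gammabound} despite the quadratic drift of $X^\ep$ — painless because $Y^\ep$ solves a genuinely linear SVE and so has moments of all orders bounded uniformly in $\ep$, $v\in\cA_N$ and $t$ — and second, the fractional-calculus inversion of the $Y$-equation in the presence of the additional Ornstein--Uhlenbeck drift, which is what gives rise to the new $\kappa\,\II^{\half-H}(\vphi)$ term in the expression for $v$.
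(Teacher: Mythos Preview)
Your proposal is correct and follows essentially the same approach as the paper's own proof: apply Theorem~\ref{thm:LDP} (after verifying the hypotheses in the $d=3$ framework), then invert the limiting equations as in Corollary~\ref{coro:explicitrate}, using $\DD^{H+\half}\II^1=\II^{\half-H}$ to extract the $\kappa\,\II^{\half-H}(\vphi)$ term, and finish {\bf (L2)}--{\bf (L3)} by contraction. The paper's proof is terser and does not spell out the verification of {\bf H3b}/Assumption~\ref{assu:Gammabound} for the quadratic drift, which you handle correctly via the linearity of the $Y^\ep$-equation.
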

\begin{proof}
For {\bf (L1)}, Theorem~\ref{thm:LDP} entails that the rate function is
\begin{align*}
I(\phi,\vphi)=& \inf \bigg\{ \half \int_0^T \left(u_t^2 + v_t^2\right) \dt : \quad u,v\in L^2, \\
& \phi_t = -\half\int_0^t \vphi_s^2\ds + \int_0^t  \vphi_s \big( \rrho u_s + \rho v_s \big) \ds , \,
\vphi_t = -\int_0^t \kappa \, \vphi_s\ds  + \int_0^t \xi\, \frac{(t-s)^{H-\half}}{\Gamma(H+\half)} v_s\ds
\bigg\}.
\end{align*}
Inverting the integrals as in Corollary~\ref{coro:explicitrate} to obtain the unique controls and using~$\DD^{H+\half} \II^1= \II^{\half-H}$ yields the claim. Similarly to the small-time case, {\bf (L2)} follows from the contraction principle, 
and one only needs to fix $t\in\bT$ to prove {\bf (L3)}.
\end{proof}

One can prove an LDP for $Y^\ep$ in a similar way; a more interesting problem is the moderate deviations setting. Recall that an MDP for the couple~$(X^\ep,Y^\ep)$ would have a trivial rate function because the limit equation of $X^\ep$ is independent of the control. 
However, since the diffusion coefficient of $Y^\ep$ is constant equal to~$\xi$, one can obtain an MDP for $Y^\ep$. More surprisingly, the limit equations in the large deviations~\eqref{eq:LDPlimit} and moderate deviations~\eqref{eq:MDPlimit} regimes coincide, which leads to identical rate functions.
Notice that~$\vartheta_\ep =\ep$ in this example, and therefore let~$h_\ep  =\ep^{-\beta}$ where~$\beta\in(0,1)$.  
\begin{corollary}
    $Y^\ep\sim\MDP\left(\Lambda^Y,\ep^{-2\beta}\right)$ where
    \begin{align*}
\Lambda^Y(\vphi)= \frac{1}{2\xi^2} \int_0^T\Big( \DD^{H+\half}(\vphi)(t) + \kappa \II^{\half-H} (\vphi)(t) \Big)^2 \dt,
\end{align*}
if~$\vphi\in \II^{H+\half}_0$ and infinity otherwise.
\end{corollary}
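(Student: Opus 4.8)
The plan is to recognise $\{Y^\ep\}_{\ep>0}$ as the first component of a two-dimensional system of the form~\eqref{eq:Xep}, to apply Theorem~\ref{thm:MDP}, and then to invert the resulting linear limiting Volterra equation exactly as for the large deviations rate function in Corollary~\ref{cor:TailSS}. Here $\vartheta_\ep=\ep$ and $h_\ep=\ep^{-\beta}$ with $\beta\in(0,1)$; the in-law limit of $Y^\ep$ solves $\overbar Y_t=-\kappa\int_0^t\overbar Y_s\ds$, so $\overbar Y\equiv0$ and the claimed MDP amounts to an LDP for $\eta^\ep:=(\vartheta_\ep h_\ep)^{-1}(Y^\ep-\overbar Y)=\ep^{\beta-1}Y^\ep$ with speed $h_\ep^2=\ep^{-2\beta}$. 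As in the Remark following Assumption~\ref{assu:K}, $Y^\ep$ is the first coordinate of a $d=2$ system~\eqref{eq:Xep} in which the drift is convolved with the constant kernel $1$ and the diffusion with $K_2(t)=t^{H-\half}/\Gamma(H+\half)$, with affine drift $b_\ep(t,y)=\kappa(\ep\theta-y)$ and constant diffusion $\sigma_\ep\equiv\xi$.

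The first step is to check the hypotheses of Theorem~\ref{thm:MDP}. Assumption~\ref{assu:K} holds for both kernel entries (with $\gamma=1$ for the constant one and $\gamma=2H$ for $K_2$, as recorded in Section~\ref{sec:SmallTimeGen}). Since $b_\ep$ is affine and $\sigma_\ep$ constant, both have linear growth uniformly in $\ep$, which gives {\bf H3a} — hence {\bf H3}, and, by the first line of Assumption~\ref{assu:Gammabound}, that assumption as well — and they converge pointwise to the continuous limits $b(t,y)=-\kappa y$ and $\sigma\equiv\xi$, which is {\bf H2}. The limit $b$ is $C^1$ and Lipschitz, giving {\bf H5}, while $\sigma\equiv\xi$ is trivially locally $\delta$-H\"older for every $\delta\in(0,1)$, giving {\bf H6}; the linear SVE for $Y^\ep$ has a unique strong solution, which is {\bf H4}. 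For {\bf H7}, $X_0^\ep=\ep y_0\to0=x_0$ and $(\vartheta_\ep h_\ep)^{-1}\abs{X_0^\ep-x_0}=\ep^\beta\abs{y_0}\to0$. Finally $b_\ep(t,y)-b(t,y)=\kappa\theta\ep$ is constant, so {\bf H8} holds with $\nu_\ep=\ep$ and $\Xi\equiv\kappa\theta$, for which $\nu_\ep(\vartheta_\ep h_\ep)^{-1}=\ep^\beta\to0$ and~\eqref{eq:Xibound} is immediate; the linearity of~\eqref{eq:MDPlimit} makes Assumption~\ref{assu:relax} automatic.

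Theorem~\ref{thm:MDP} then yields $\eta^\ep\sim\LDP(\Lambda^Y,\ep^{-2\beta})$ with $\Lambda^Y(\vphi)=\inf\{\half\int_0^T v_t^2\dt:v\in L^2,\ \vphi=\cT^0_v\}$, where by~\eqref{eq:MDPlimit} — using $\nabla b(s,\overbar Y_s)=-\kappa$, $\sigma(s,\overbar Y_s)=\xi$, the drift paired with the constant kernel and the diffusion with $K_2$ — the map $\cT^0_v$ is the unique solution of
\begin{equation*}
\vphi_t=-\kappa\int_0^t\vphi_s\ds+\xi\int_0^t\frac{(t-s)^{H-\half}}{\Gamma(H+\half)}v_s\ds,\qquad t\in\bT.
\end{equation*}
Because $b$ is affine and $\sigma$ constant, this is exactly the $Y$-component of the large deviations limit equation in Corollary~\ref{cor:TailSS}\,{\bf (L1)}, and I would invert it in the same way: rewriting the right-hand side as $\xi\,\II^{H+\half}(v)=\vphi+\kappa\,\II^1(\vphi)$ and applying $\DD^{H+\half}$, using $\DD^{H+\half}\II^{H+\half}=\mathrm{Id}$ and $\DD^{H+\half}\II^1=\II^{\half-H}$ (both valid for $H\in(0,\half)$), yields the unique control $v_t=\xi^{-1}\bigl(\DD^{H+\half}(\vphi)(t)+\kappa\,\II^{\half-H}(\vphi)(t)\bigr)$ whenever $\vphi\in\II^{H+\half}_0$, and no admissible $v$ otherwise. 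Substituting into $\half\int_0^T v_t^2\dt$ gives the stated $\Lambda^Y$, with value $+\infty$ on the complement of $\II^{H+\half}_0$.

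The only genuine obstacle is bookkeeping: keeping the two kernels attached to the correct terms when passing from~\eqref{eq:rSStail} through~\eqref{eq:Xep} to~\eqref{eq:MDPlimit}, and verifying that the Riemann--Liouville identities hold on the function classes at hand; both are routine for $H\in(0,\half)$, and every remaining verification merely repeats arguments already made for this model in Sections~\ref{sec:rSS} and~\ref{sec:TailsEx}.
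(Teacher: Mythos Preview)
Your proposal is correct and follows essentially the same approach as the paper: verify {\bf H2--H8} (and the kernel assumption) for the affine drift $b_\ep(y)=\kappa(\ep\theta-y)\to b(y)=-\kappa y$ and constant diffusion $\sigma\equiv\xi$, apply Theorem~\ref{thm:MDP} to obtain the variational form of $\Lambda^Y$, and then invert the limiting linear Volterra equation exactly as in Corollary~\ref{cor:TailSS}. Your verification is considerably more detailed than the paper's terse check, but the route is identical.
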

\begin{proof}
From~\eqref{eq:rSStail}, $b_\ep(y)=\kappa( \ep\theta -y)$ converges to $b(y)=-\kappa y$ and the diffusion coefficient is constant, hence {\bf H2 - H6} are easily satisfied. Moreover, $b_\ep - b \equiv \ep \kappa\theta$ and $\ep^{1-(H-\beta)}$ tends to zero therefore {\bf H7} and {\bf H8} also hold. Theorem~\ref{thm:MDP} thus yields an MDP with rate function
\begin{align*}
\Lambda^Y(\vphi)=& \inf \bigg\{ \half \int_0^T v_t^2 \dt : v\in L^2, 
\, \vphi_t = -\int_0^t  \kappa \vphi_s\ds  + \xi\int_0^t \frac{(t-s)^{H-\half}}{\Gamma(H+\half)} v_s\ds
\bigg\}.
\end{align*}
Inverting the integral yields the claim.
\end{proof}

%%%%%%%%%%%%%%%%%%%%%%%%%%%%%%%%%%%%%%%%%%%%%%%%%%%%%%%%
\subsubsection{Rough Heston}
After the rescaling~$Y^\ep_t:=\ep^2 Y_{t}$ and~$X^\ep_t:= \ep^2 X_{t}$, this model introduced in Section~\ref{sec:rHeston} takes the form
\begin{align*}
\left\{
\begin{array}{rl}
    X^\ep_t &= \displaystyle  - \half\int_0^t  Y^\ep_s \ds + \ep \int_0^t \sqrt{Y^\ep_s} \D B_s, \\
    Y^\ep_t &= \displaystyle \ep^2 y_0 +  \int_0^t \kappa \frac{(t-s)^{H-\half}}{\Gamma(H+\half)} \left(\ep^2 \theta - Y^\ep_s\right)\ds
     + \ep  \int_0^t \xi \frac{(t-s)^{H-\half}}{\Gamma(H+\half)} \sqrt{Y^\ep_s} \D W_s.
\end{array}
\right.
\end{align*}
Clearly {\bf H1 - H3} hold and we recall that~$\cU$ is the set of coefficients such that pathwise uniqueness, and hence {\bf H4}, hold.
We appeal to the uniqueness relaxation in the same way as the small-time case to prove the following result, which extends~\cite[Theorem 1.1]{CDMD15} to the rough case.
\begin{corollary}
\label{coro:tailheston}
If the rough Heston coefficients belong to $\cU$ and~$H\in(0,\half)$, then the following hold
\begin{enumerate}[{\bf (L1)}]
    \item $(X^\ep,Y^\ep)\sim\LDP\left(I,\ep^{-2}\right)$ where
\begin{equation*}
I(\phi,\vphi)= \half \int_0^T \left(u_t^2 + v_t^2\right) \dt, \quad \text{where} 
\left\{
\begin{array}{rl}
u & = \displaystyle \frac{1}{\rrho} \left( \frac{\dot\phi}{\sqrt\vphi} + \half \sqrt\vphi - \rho v \right)\one_{\vphi>0},\\
v & = \displaystyle \frac{1}{\xi\sqrt{\vphi}} \left(\DD^{H+\half}(\vphi) + \kappa \vphi \right) \one_{\vphi>0},
\end{array}
\right.
\end{equation*}
where $\phi\in\Ac_0$ and~$\vphi\in\II_0^{H+\half}(L^1)$ and infinity otherwise.
    \item $X^\ep\sim\LDP\left(I^X,\ep^{-2}\right)$ with
    $I^X(\phi)=\inf \big\{ I(\phi,\vphi): \vphi\in\II_0^{H+\half} \big\}$ if~$\phi\in\Ac_0$ and infinity otherwise.
    \item For each $t\in\bT$, $\ep^2 X_t\sim\LDP\left(I^X_t,\ep^{-2}\right)$, where
    $I^X_t(x)=\inf \big\{ I^X(\phi): \phi_t=x \big\}$.
\end{enumerate}

\end{corollary}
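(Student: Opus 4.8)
The plan is to transcribe the small-time rough Heston argument of Section~\ref{sec:rHeston} to the present tail rescaling. I would first check the hypotheses of Theorem~\ref{thm:LDP}: {\bf H1 - H3} hold as noted above; {\bf H4} holds on the assumed set~$\cU$; Assumption~\ref{assu:K} holds for the Riemann-Liouville kernel~$t^{H-\half}/\Gamma(H+\half)$ with~$\gamma=2H$; Assumption~\ref{assu:Gammabound} is automatic from {\bf H3a}, since~$\Sigma(y)=y$ and~$\zeta(y)=\xi\sqrt{y}$ have linear growth; and compactness of the level sets of the rate function is granted by Lemma~\ref{lemma:goodnessLDP}. Granting these, Theorem~\ref{thm:LDP} yields an LDP for~$(X^\ep,Y^\ep)$ with speed~$\ep^{-2}$ and implicit rate function
\[
I(\phi,\vphi)=\inf\bigg\{\half\int_0^T\big(u_t^2+v_t^2\big)\dt:\ \phi_t=\int_0^t\Big[-\tfrac12\vphi_s+\sqrt{\vphi_s}\big(\rrho u_s+\rho v_s\big)\Big]\ds,\ \vphi_t=\int_0^t\tfrac{(t-s)^{H-\half}}{\Gamma(H+\half)}\big(-\kappa\vphi_s+\xi\sqrt{\vphi_s}\,v_s\big)\ds\bigg\}.
\]

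I would then invert the two constraints exactly as in Corollary~\ref{coro:explicitrate}: applying~$\DD^{H+\half}$ to the~$\vphi$-equation yields~$\DD^{H+\half}(\vphi)(t)=-\kappa\vphi_t+\xi\sqrt{\vphi_t}\,v_t$, hence~$v_t=\frac{1}{\xi\sqrt{\vphi_t}}\big(\DD^{H+\half}(\vphi)(t)+\kappa\vphi_t\big)\one_{\vphi_t>0}$, and differentiating the~$\phi$-equation gives~$u_t=\frac{1}{\rrho}\big(\frac{\dot\phi_t}{\sqrt{\vphi_t}}+\frac12\sqrt{\vphi_t}-\rho v_t\big)\one_{\vphi_t>0}$; on~$\{\vphi_t=0\}$ one has~$\DD^{H+\half}(\vphi)(t)=0$ automatically and the cost-minimising choice is~$u_t=v_t=0$, so the infimum collapses to a single integral, with~$\vphi\equiv0$ forcing~$\phi\equiv0$ and~$I=0$, and~$I=+\infty$ whenever~$\phi\notin\Aco$ or~$\vphi\notin\II_0^{H+\half}(L^1)$. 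This is the formula of {\bf (L1)}. Then {\bf (L2)} follows by the contraction principle~\cite[Theorem~4.2.1]{DZ10} applied to the continuous projection~$(\phi,\vphi)\mapsto\phi$, and {\bf (L3)} by the further continuous projection~$\phi\mapsto\phi_t$ for fixed~$t\in\bT$.

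The main obstacle is the remaining hypothesis, Assumption~\ref{assu:relax}: because the square-root diffusion coefficient is not Lipschitz --- and because, the rescaled initial datum and mean-reversion level both vanishing, the origin is an equilibrium of the limiting volatility equation --- pathwise uniqueness for that equation fails, so the unique-characterisation condition must be restored by a perturbation argument. I would follow Lemma~\ref{lemma:heston}: given~$(\phi,\vphi)$ with~$I(\phi,\vphi)<\infty$, set~$\vphi^\delta_t:=\vphi_t+\delta t^{H+\half}$, which is strictly positive on~$(0,T]$; along~$\vphi^\delta$ the square root is locally Lipschitz, the perturbed limiting equation is (away from the trivial solution) uniquely characterised, and~$\normbT{\vphi-\vphi^\delta}\le\delta T^{H+\half}$ while~$|I(\phi,\vphi)-I(\phi,\vphi^\delta)|$ vanishes as~$\delta\downarrow0$ by dominated convergence. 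The one ingredient beyond Lemma~\ref{lemma:heston} is an analogue of Lemma~\ref{lemma:lebesguezero} for the limiting equation with the extra mean-reversion term~$-\kappa\vphi$: since this term is absolutely continuous, the resolvent-of-the-first-kind computation of Lemma~\ref{lemma:lebesguezero} carries over and shows that~$\vphi$ cannot vanish on any subinterval on which it is not identically zero, so that the contributions of~$\{\vphi=0\}$ to the cost of the perturbed controls are negligible. Handling this degeneracy carefully is where the real work lies; the rest is a routine reproduction of Section~\ref{sec:rHeston} and Corollary~\ref{coro:explicitrate}.
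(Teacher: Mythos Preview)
Your proposal is correct and follows essentially the same route as the paper's own proof: write down the implicit rate function from Theorem~\ref{thm:LDP}, argue that the analogues of Lemmata~\ref{lemma:lebesguezero} and~\ref{lemma:heston} carry over to verify Assumption~\ref{assu:relax}, invert the constraints to obtain the explicit formula in {\bf (L1)}, and then contract for {\bf (L2)} and {\bf (L3)}. If anything, you are more explicit than the paper about the two adaptations required in the tail regime---the extra mean-reversion term~$-\kappa\vphi$ and the vanishing initial datum~$y_0=0$, which makes~$\vphi\equiv0$ a genuine solution---whereas the paper simply asserts that ``the same arguments\ldots can be applied again here.''
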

\begin{proof}
The proof is similar to the small-time case. The potential rate function for the couple is   
\begin{align*}
I(\phi,\vphi)=& \inf \bigg\{ \half \int_0^T \left(u_t^2 + v_t^2\right) \dt:\quad u,v\in L^2, \\
& \phi_t = -\half\int_0^t \vphi_s\ds + \int_0^t  \sqrt{\vphi_s} \big( \rrho u_s + \rho v_s \big) \ds, \quad
\vphi_t = \int_0^t \frac{(t-s)^{H-\half}}{\Gamma(H+\half)}\Big( -\kappa \vphi_s + \xi \sqrt{\vphi_s} v_s \Big) \ds
\bigg\}.
\end{align*}
The same arguments that were used to prove Lemmata~\ref{lemma:lebesguezero} and~\ref{lemma:heston} in the small-time case can be applied again here.
They entail that Assumption~\ref{assu:relax} holds and hence Theorem~\ref{thm:LDP} applies,
and the form of the rate function in {\bf (L1)} follows by inverting the relationships between $(u,v)$ and $(\phi, \varphi)$.
{\bf (L2)} and {\bf (L3)} follow from the same steps as in Corollary~\ref{coro:tailSS}.
\end{proof}

%%%%%%%%%%%%%%%%%%%%%%%%%%%%%%%%%%%%%%%%%%%%
\subsubsection{Implied volatility asymptotics}
We can also obtain implied volatility asymptotics since, by the same arguments as before, $\exp(X^\ep)$ is a martingale in both the rough Stein-Stein and rough Heston models.
\begin{corollary}
\label{coro:tailIV}
In both the rough Stein-Stein and rough Heston models, for each $t\in\bT$, the implied volatility~$\widehat \sigma$ satisfies
\[
\lim_{k\uparrow\infty} \frac{\widehat{\sigma}(t,k)^2 t}{k} = \frac{1}{2} \left( \inf_{y\ge1} I^X_t(y) \right)^{-1},
\]
where~$I^X_t$ is the respective rate function, given in Corollaries~\ref{cor:TailSS}{\bf (L3)} and~\ref{coro:tailheston}{\bf (L3)}.
\end{corollary}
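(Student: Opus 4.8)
The plan is to mirror the LDP part of the proof of Corollary~\ref{coro:IV}: first turn the one-dimensional LDP of Corollaries~\ref{cor:TailSS}{\bf (L3)} and~\ref{coro:tailheston}{\bf (L3)} into a large-strike right-tail estimate for the log-price, and then transfer that estimate into implied-volatility asymptotics via the Black--Scholes comparison of \cite[Corollary 7.1]{GL14}. Fix $t\in\bT$. For both models the scalar family $\ep^2 X_t$ satisfies an LDP on $\bR$ with speed $\ep^{-2}$ and good rate function $I^X_t$ given respectively in Corollaries~\ref{cor:TailSS}{\bf (L3)} and~\ref{coro:tailheston}{\bf (L3)}; here Assumption~\ref{assu:relax} is in force by the same arguments already used in the small-time case. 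Applying the LDP upper bound to the closed half-line $[1,\infty)$ and the lower bound to its interior $(1,\infty)$, and using that $\inf_{x>1}I^X_t(x)=\inf_{x\ge1}I^X_t(x)$ for these rate functions, we obtain
\[
\lim_{\ep\downarrow0}\ep^2\log\bP\big(X_t\ge\ep^{-2}\big)=-\inf_{x\ge1}I^X_t(x).
\]
Since the limit holds along the continuous parameter $\ep\downarrow0$, the substitution $k=\ep^{-2}$ gives $\lim_{k\uparrow\infty}k^{-1}\log\bP(X_t\ge k)=-\inf_{x\ge1}I^X_t(x)=:-\Lambda_t$, with $\Lambda_t\in(0,\infty)$ because the unique zero of $I^X_t$ is the deterministic limit $\overbar X_t=0$.

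Next I would run the Black--Scholes matching. As recalled in the proof of Corollary~\ref{coro:IV}, $\exp(X_t)$ is a true martingale in both cases --- for rough Stein--Stein because $Y$ is Gaussian so Novikov's condition~\cite[Section 3.5.D]{KS98} applies, and for rough Heston by \cite[Appendix B]{ER18} --- so that the implied volatility $\widehat\sigma(t,\cdot)$ is well defined. In the Black--Scholes model with volatility $\sigma$ one has $X_t=-\frac{\sigma^2 t}{2}+\sigma B_t$, and elementary Gaussian estimates express the right-wing slope $\lim_{k\uparrow\infty}k^{-1}\log\bP(X_t\ge k)$ as a function of $\widehat\sigma(t,k)^2 t/k$ (the analogue of the Gaussian computation carried out for Corollary~\ref{coro:IV}, but now with $t$ fixed and $k\uparrow\infty$). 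Feeding the two asymptotics --- that of the target model and that of Black--Scholes --- into the transfer result \cite[Corollary 7.1]{GL14} yields
\[
\lim_{k\uparrow\infty}\frac{\widehat{\sigma}(t,k)^2 t}{k}=\frac{1}{2\,\inf_{x\ge1}I^X_t(x)},
\]
which is the announced formula.

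The main obstacle is checking that the hypotheses of \cite[Corollary 7.1]{GL14} are genuinely met in this regime: that $\exp(X_t)$ is a true (not merely super-) martingale, that the tail rate $\Lambda_t$ lies in the range for which the Black--Scholes matching has a unique finite solution, and that the one-sided scalar LDP for $\ep^2 X_t$ really does capture the large-strike right tail of the unrescaled $X_t$, which in particular requires the mild regularity $\inf_{x\ge1}I^X_t(x)=\inf_{x>1}I^X_t(x)$ so that replacing the closed half-line by its interior is harmless. Once these points are secured, the remainder is the routine Black--Scholes tail expansion already performed for Corollary~\ref{coro:IV}.
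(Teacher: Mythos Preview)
Your proposal is correct and follows essentially the same route as the paper: convert the one-dimensional LDP of Corollaries~\ref{cor:TailSS}{\bf (L3)} and~\ref{coro:tailheston}{\bf (L3)} into the tail estimate $\lim_{k\uparrow\infty}k^{-1}\log\bP(X_t\ge k)=-\inf_{y\ge1}I^X_t(y)$ via the substitution $\ep^2=1/k$, and then invoke \cite[Corollary 7.1]{GL14}. The only point where you are slightly less explicit than the paper is the Black--Scholes side of the comparison: the paper records directly that in Black--Scholes $\lim_{k\uparrow\infty}k^{-2}\log\bP(X_t\ge k)=-\tfrac{1}{2\sigma^2 t}$ (Gaussian tail, hence rate $k^{-2}$ rather than $k^{-1}$), which after matching yields $\widehat\sigma(t,k)^2 t/k\to (2\Lambda_t)^{-1}$; your phrasing ``expresses the right-wing slope $\lim_{k\uparrow\infty}k^{-1}\log\bP(X_t\ge k)$ as a function of $\widehat\sigma(t,k)^2 t/k$'' amounts to the same thing once one lets the volatility depend on~$k$, but stating the $k^{-2}$ rate explicitly would make the argument cleaner.
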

\begin{proof}
Mapping $\ep^2$ to $1/k$ we have from Corollaries~\ref{coro:tailSS} and~\ref{coro:tailheston} respectively that, for each $t\in\bT$,
\[
\lim_{k\uparrow\infty} \frac{1}{k} \log \bP( X_t \ge k) = -\inf_{y\ge1} I^X_t(y).
\]
In the Black-Scholes model with constant volatility $\sigma>0$, we can directly compute
\[
\lim_{k\uparrow\infty}  \frac{1}{k^2}\log \bP(X_t\ge k) = -\frac{1}{2\sigma^2 t},
\]
and, similarly to the small-time case, the proof follows from~\cite[Corollary 7.1]{GL14}.
\end{proof}

%%%%%%%%%%%%%%%%%%%%%%%%%%%%%%%%%%%%%%%%%%%%%%%%%%%%%%%%%%%%%%%%%%%%%%%%%%%%%%%%%%%%%%%%%%%%%%%%%%%%%%%%%%%%%%%%%%%%%%%%%%%%%%%%%%%%%%%
\appendix
\section{Technical large deviations proofs}
\subsection{Abstract relaxation: Proof of Theorem~\ref{thm:relax}}
\label{app:proof:thm:relax}
The proof follows~\cite[Theorem 4.4]{BD01}. The lower bound proof stands as it is until the last series of inequalities. 
%Let and define the subsequence~$\{y_\ep\}_{\ep>0}$ as $y_\ep:=\inf_{\alpha < \ep} x_\alpha$, such that $\liminf_{\ep\downarrow0} x_\ep = \lim_{\ep\downarrow0}y_\ep$. By definition, $\{y_\ep\}$ has a limit in~$[-\infty,+\infty]$. Therefore if $\{y_\ep\}$ has a subsequence converging to some limit $y_0\in\bR$, then $\lim_{\ep\downarrow0}y_\ep = y_0$.
For any sequence $\{v^\ep\}_{\ep>0}$ in $\cA_N,N>0$ converging weakly to $v$ and any~$F\in\cC_b(\cW^d:\bR)$, define the function~$x:\bR_+\to\bR$ by~$x(\ep) := \bE\left[ F\circ\cG^{\ep} \left( W + \ep^{-1} \int_0^\cdot v^\ep_t \dt\right) \right]$. Theorem~\ref{thm:relax}(i) entails the existence of~$\phi\in\cG^0_{v,N}$ such that the subsequence~$y:\bR_+\to\bR$ defined as~$y(\ep):=\inf_{\alpha < \ep} x(\alpha)$ has a subsequence converging to~$\bE\left[ F(\phi)\right]$. By definition $\liminf_{\ep\downarrow0} x_\ep = \lim_{\ep\downarrow0}y_\ep$, which implies that $y$ has a limit in~$[-\infty,+\infty]$ and by uniqueness this limit must be~$\bE\left[ F(\phi)\right]$. Therefore we deduce:
\begin{align*}
    \liminf_{\ep\downarrow0} \bE &\left[ \half\int_0^T \abs{v^\ep_t}^2\dt + F\circ\cG^{\ep} \left( W + \ep^{-1} \int_0^\cdot v^\ep_t \dt\right) \right] \\
    & \ge \bE \left[ \half\int_0^T \abs{v_t}^2\dt + F(\phi) \right]
    \ge \inf \bigg\{ \half\int_0^T \abs{v_t}^2\dt + F(\phi) : v\in L^2 \text{ such that } \, \phi\in\cG^0_v\cap\cW^d\bigg \},
\end{align*}
which suggests the potential rate function~$I$ defined in~\eqref{eq:raterelax} and concludes the proof of the lower bound.

Then we prove the Laplace principle upper bound, for all~$F\in\cC_b(\cW^d:\bR)$:
\[
\limsup_{\ep\downarrow0} - \ep^2 \log \bE\left[\E^{-F\circ \cG^\ep(W)/{\ep^2}} \right] \le \inf_{\psi\in\cW^d} \{ I(\psi)+F(\psi) \}.
\]
Assume that the right-hand side is finite otherwise there is nothing to prove. Fix $\epsilon>0$ and let~$\phi\in\cW^d$ such that
\[
I(\phi)+F(\phi) \le \inf_{\psi\in\cW^d} \{ I(\psi)+F(\psi) \} + \epsilon.
\]
Since $F$ is continuous at~$\phi$, there exists $\delta\in(0,\epsilon)$ such that $\abs{F(\phi)-F(\vphi)}\le \epsilon$ for all~$\vphi\in\cW^d$ such that $\normbT{\phi-\vphi}\le \delta$.
If~$\phi$ is uniquely characterised then the proof is the same as in~\cite{BD01}. 
Otherwise, by Theorem~\ref{thm:relax}(iii), we can choose~$\phi^\delta$ uniquely characterised such that
$\normbT{\phi-\phi^\delta} \le \delta$
and
$\abs{I(\phi)-I(\phi^\delta)} \le \delta$,
which implies
$\abs{I(\phi) + F(\phi) - I(\phi^\delta) - F(\phi^\delta)} \le 2 \epsilon$.
Hence, combining inequalities we obtain
\[
I(\phi^\delta)+F(\phi^\delta) \le \inf_{\psi\in\cW^d} \{ I(\psi)+F(\psi)\} + 3\epsilon.
\]
Moreover, there exist~$\{ v^n\}_{n\in\bN}$ in~$L^2$ such that~\eqref{eq:condUB} is satisfied with~$\phi^\delta$ and $m\ge 1/\epsilon$ such that
\[
\cG^0_{v^m}=\{\phi^\delta\} \quad \text{and} \quad \half \int_0^T \abs{{v}^m_t}^2\dt \le I(\phi^\delta)+\frac{1}{m} \le I(\phi^\delta)+\epsilon,
\]
and therefore the remainder of the upper bound proof unfolds identically. 

Along the subsequence~$\{\ep_n\}_{n\ge0}$,~$\cG^{\ep_n}( W + \ep_n^{-1} \int_0^\cdot  v^m_t \dt)$ converges in distribution to~$\phi^\delta$ by item (i). Using the variational representation formula~\eqref{eq:boue} and the convergence we obtain
\begin{align*}
    \limsup_{n\uparrow\infty} - \ep_n^2 \log \bE\left[\exp\left\{-\frac{F\circ \cG^{\ep_n}(W)}{\ep_n^2}\right\} \right] 
    & = \limsup_{n\uparrow\infty} \inf_{v\in\cA} \bE \left[ \half\int_0^T \abs{v_t}^2\dt + F\circ\cG^{\ep_n} \left( W + \ep_n^{-1} \int_0^\cdot v_t \dt\right) \right] \\
    & \le \limsup_{n\uparrow\infty} \bE \left[ \half\int_0^T \abs{v^m_t}^2\dt + F\circ\cG^{\ep_n} \left( W + \ep_n^{-1} \int_0^\cdot v^m_t \dt\right) \right] \\
    & = \half \int_0^T \abs{v^m_t}^2 \dt + F(\phi^\delta) \\
    & \le I(\phi^\delta)+F(\phi^\delta)+ \epsilon
    \le \inf_{\psi\in\Omega} \{ I(\psi)+F(\psi)\} +4 \epsilon.
\end{align*}
Since~$\epsilon>0$ is arbitrary this concludes the proof.

%%%%%%%%%%%%%%%%%%%%%%%%%%%%%%%%%%%%%%%%%%%%%%%%%%%%
%%%%%%%%%%%%%%%%%%%%%%%%%%%%%%%%%%%%%%%%%%%%%%%%%%%%

\subsection{LDP moment bounds: Proof of Lemma~\ref{lemma:boundv}}
\label{app:proof:lemma:boundv}
Let us fix~$p\ge2$, $N>0$, $v\in\cA_N$, $\ep>0$ and $t\in\bT$.
Let~$\tau_n := \inf \{ t\ge0: \abs{X_t^{\ep,v}}\ge n \}\wedge T$ for all~$n\in\bN$. For clarity we write~$b_s^n:=b_\ep(s,X^{\ep,v}_s  \one_{s\le\tau_n})$ 
and~$\sigma_s^n:=\sigma_\ep(s,X^{\ep,v}_s  \one_{s\le\tau_n})$.
We start by assuming that all the coefficients satisfy the linear growth condition~{\bf H3a}. We fix~$n\in\bN$ and observe that, almost surely:
\begin{align}
\label{eq:ineqtaun}
    \abs{X^{\ep,v}_{t}}^p  \one_{t\le\tau_n}
    \le & 4^{p-1} \bigg[ \abs{X^\ep_0}^p + \abs{\int_0^t K(t-s)b^n_{s}\ds}^p
    + \abs{\int_0^t K(t-s)\sigma_{s}^n v_s \ds}^p  
    + \vartheta_\ep ^p \abs{\int_0^t K(t-s)\sigma_{s}^n \D W_s}^p
    \bigg] \nonumber \\
    =: & 4^{p-1} \Big[ \abs{X_0^\ep}^p + \text{I}_n
    + \text{II}_n + \text{III}_n \Big], 
\end{align}
because if~$t>\tau_n$ then the left-hand side is zero while the right-hand side is non-negative, and if~$t\le\tau_n$ then~$s\le\tau_n$ for all~$s\in[0,t]$ and the~$\tau_n$ dependence vanishes on both sides of the inequality.
For~$\ep$ small enough we can bound~$\abs{X_0^\ep}$ by~$2\abs{X_0}$ and~$\vartheta_\ep $ by~$1$ and we will do so repetitively in the sequel.
Using H\"older's and Jensen's inequalities, we obtain the following estimates almost surely:
\begin{equation}
    \text{I}_n
     \le \left[\int_0^t \abs{K(t-s)}^{\frac{4}{p}} \abs{b_s^n}^2 \ds \right]^{\frac{p}{2}} 
     \left[\int_0^t \abs{K(t-s)}^{2-\frac{4}{p}}  \ds \right]^{\frac{p}{2}}
     \le t^{\frac{p^2+4}{2(p-2)}} \norm{K}_{2}^{p-2} \int_0^t \abs{K(t-s)}^2 \abs{b_s^n}^p \ds,
\label{eq:Holdertrickb}
\end{equation}
and 
\begin{equation}
    \text{II}_n
    \le N^{\frac{p}{2}} \left[ \int_0^t \abs{K(t-s)}^{2-4/p} \abs{K(t-s)}^{\frac{4}{p}} \abs{\sigma_s^n}^2 \ds \right]^{\frac{p}{2}}
    \le N^{\frac{p}{2}} \norm{K}_{2}^{p-2} \int_0^t \abs{K(t-s)}^2 \abs{\sigma_s^n}^p \ds,
\label{eq:Holdertrick}
\end{equation}
where we also used that~$\int_0^T \abs{v_s}^2 \ds \le N$ almost surely.
Notice that for fixed~$t\in\bT$, $\{ \int_0^u K(t-s)\sigma_s^n\D W_s:  u\in[0,t]\}$ is a continuous local martingale and is bounded in~$L^2(\Omega)$. Hence, using Burkholder-Davis-Gundy (BDG) inequality and similar calculations as~\eqref{eq:Holdertrick} there exists~$C_p>0$ such that
\[
\bE[\text{III}_n] \le C_p \norm{K}_{2}^{p-2} \int_0^t \abs{K(t-s)}^2 \bE\abs{\sigma_s^n}^p \ds.
\]
From the linear growth condition on~$b_\ep$ and~$\sigma_\ep$ (uniform in~$\ep>0$) we deduce that there exists $C_1>0$ independent of~$\ep,v,n,t$ such that, for all~$n\in\bN$, $f^n_t:=\bE \left[ \abs{X^{\ep,v}_t}^p  \one_{t\le\tau_n} \right]$ satisfies the inequality
\begin{equation}
f_t^n \le C_1 + C_1 \int_0^t \abs{K(t-s)}^2  f_s^n\ds.
\label{eq:gronwall}
\end{equation}
The following lemma (Lemma~\ref{lemma:gronwallK}) yields a uniform bound in both~$n\in\bN$ and~$t\in\bT$ for~$f^n_t$. Taking the limit as~$n$ goes to infinity and using Fatou's lemma concludes the first part of the proof.
\begin{lemma}
\label{lemma:gronwallK}
Let~$f:\bT \to \bR_+$ and~$K$ a kernel satisfying Assumption~\ref{assu:K}. 
If there exists~$c_1,c_2\ge0$ such that
\[
f(t) \le c_1 + c_2 \int_0^t \abs{K(t-s)}^2 f(s)\ds,
\qquad\text{for all }t\in\bT,
\]
then~$f$ is uniformly bounded on~$\bT$ by a constant depending only on~$c_1,c_2,\norm{K}_2, T$. 
If~$c_1=0$ then~$f=0$.
\end{lemma}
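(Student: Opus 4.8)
The plan is to prove this generalised Gronwall inequality by a localisation-and-iteration argument; the subtlety is that $g:=\abs{K}^2$ is integrable on $\bT$ but in general unbounded, so the classical lemma (which would invoke $\norminf{g}$) does not apply directly. First I would reduce to the case $f\ge 0$: since $c\ge 0$ and $\abs{K}^2\ge 0$, the function $f^+:=\max(f,0)$ again satisfies $f^+(t)\le c+c\int_0^t\abs{K(t-s)}^2 f^+(s)\ds$, and it suffices to bound $f^+$. If $c=0$ the hypothesis forces $f\le 0$, hence $f\equiv 0$; so assume $c>0$. Since $g\in L^1(\bT)$ with $\int_0^T g=\norm{K}_2^2=:L$, and $\int_0^h g(s)\ds\to 0$ as $h\downarrow 0$ — this is the first term controlled in Assumption~\ref{assu:K} — I fix once and for all an $h\in(0,T]$ such that $c\int_0^h g(s)\ds\le \tfrac12$.

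The core step is a local estimate: if $a\ge 0$, $d<\infty$, and a bounded measurable $\phi\ge 0$ on $[a,a+h]\cap\bT$ satisfies $\phi(t)\le d+c\int_a^t g(t-s)\phi(s)\ds$ there, then $\phi\le 2d$ on $[a,a+h]\cap\bT$. This follows by iterating the inequality $n$ times: the $k$-th term is bounded by $d\,c^k\int_0^h g^{*k}(s)\ds$, where $g^{*k}$ is the $k$-fold convolution, and a change of variables identifies $\int_0^h g^{*k}$ with the integral of $\prod_{i=1}^k g(v_i)$ over $\{v_i>0,\ \sum_i v_i<h\}\subset(0,h)^k$, so $\int_0^h g^{*k}(s)\ds\le\big(\int_0^h g\big)^k\le(2c)^{-k}$ and the $k$-th term is $\le d\,2^{-k}$; summing over $k\ge 0$ gives $2d$, while the tail $c^n\big(\int_0^h g^{*n}\big)\sup\phi\le 2^{-n}\sup\phi$ vanishes as $n\to\infty$. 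It is exactly this simplex bound — in place of a pointwise bound on $g$ — that makes the iteration converge for a singular kernel; this is the only genuinely delicate point, the rest being bookkeeping.

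Finally I would bootstrap across $\bT$ in steps of length $h$. On $[0,h]$ the local estimate with $a=0$, $d=c$ gives $f\le 2c$. Inductively, if $f\le M_j$ on $[0,jh]$, then for $t\in[jh,(j+1)h]\cap\bT$ one splits $c\int_0^t g(t-s)f(s)\ds=c\int_0^{jh}+c\int_{jh}^t$ and bounds the first piece by $cM_j\int_0^{T}g=cLM_j$, so the local estimate with $a=jh$, $d=c+cLM_j$ gives $f\le M_{j+1}:=2(c+cLM_j)$ on $[jh,(j+1)h]$, hence $f\le\max(M_j,M_{j+1})$ on $[0,(j+1)h]$. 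After $\lceil T/h\rceil$ steps this covers $\bT$ with a finite bound depending only on $c$, $L=\norm{K}_2^2$ and $T$. The case $c=0$ was already disposed of. One small point: the ``bounded $\phi$'' hypothesis of the local estimate is supplied in our applications by the a priori finiteness of $f=f^n_t$ (bounded by the stopping level), which is precisely why the stopping-time truncation is introduced in the proof of Lemma~\ref{lemma:boundv}; alternatively one checks boundedness of $f$ on $[0,jh]$ inside the induction.
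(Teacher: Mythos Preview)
Your proof is correct and takes a genuinely different route from the paper. The paper's argument is a two-line appeal to the generalised Gronwall inequality \cite[Theorem 9.8.2]{Gripenberg90}: one recognises $\widetilde K(t,s):=c\abs{K(t-s)}^2\one_{s\le t}$ as an $L^1$ Volterra kernel, and the cited theorem delivers the bound $f(t)\le c-c\int_0^t\widetilde R(s)\ds$ directly in terms of the (non-positive) resolvent~$\widetilde R$ of~$-\widetilde K$. You instead build the estimate from scratch by a localisation-and-iteration scheme, using only that $\abs{K}^2\in L^1$ to pick a step size~$h$ on which the convolution contracts (the simplex bound $\int_0^h g^{*k}\le(\int_0^h g)^k$), and then bootstrapping across~$\bT$ in $\lceil T/h\rceil$ steps. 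Your approach is more elementary and entirely self-contained, at the cost of some length; the paper's is terser but outsources the work to the resolvent machinery of~\cite{Gripenberg90}.

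Two small remarks. First, your local estimate needs $\phi$ locally bounded for the tail $2^{-n}\sup\phi$ to vanish; you correctly flag this and note it is supplied in the application by the stopping-time truncation in Lemma~\ref{lemma:boundv} --- the paper's reference to~\cite{Gripenberg90} carries analogous implicit hypotheses. Second, the conclusion ``$c=0\Rightarrow f=0$'' in both your argument and the paper's only yields $f\le 0$ without the tacit assumption $f\ge 0$, which does hold in every use of the lemma.
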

\begin{proof}[Proof of Lemma~\ref{lemma:gronwallK}]
By definition
$\abs{K(t-s)}^2 = \sum_{i,j=1}^d \abs{K_{ij}(t-s)}^2$,
and~$\widetilde K(t,s) := c_2 \abs{K(t-s)}^2 \mathbbm{1}_{s\le t}$ is a Volterra kernel in the sense of~\cite[Definition~9.2.1]{Gripenberg90}. Following similar arguments as in the proof of~\cite[Lemma 3.1]{ALP17}, the generalised Gronwall lemma~\cite[Theorem 9.8.2]{Gripenberg90}
yields the bound
\[
f(t) \le c_1 - c_1\int_0^t \widetilde R(s)\ds \le c_1 - c_1\int_0^T \widetilde R(s)\ds,
\]
where~$\widetilde R$ is the (non-positive) resolvent of second kind of~$-\widetilde K$~\cite[Equation (2.11)]{ALP17},
proving the lemma.
\end{proof}

If only {\bf H3b} and Assumption~\ref{assu:Gammabound} hold with an autonomous sub-system~$\Upsilon$ (see Definition~\ref{def:autonomous}), then by the previous calculations for all~$l\in\Upsilon$, the components~$(X^{\ep,v})^{(l)}$ satisfy the bound~\eqref{eq:boundv} because their coefficients have linear growth. Then we turn our attention to the components~$(X^{\ep,v})^{(i)}$,~$i\notin\Upsilon$. Using~\eqref{eq:boundgsto} and H\"older's inequality as in~\eqref{eq:Holdertrick}, we obtain that for all $1\le j \le d$ such that~$K_{ij}\neq0$ and for all~$1\le k \le m$:
\begin{align}
    \bE & \left[ \left( \int_0^t \abs{K_{ij}(t-s) \sigma_\ep^{(jk)}\big(s,X^{\ep,v}_{s}\one_{s\le\tau_n}\big)}^2 \ds \right)^{p/2} \right] \nonumber\\
    & \le C_\Upsilon^p \bE \left[ \left( \int_0^t \abs{K_{ij}(t-s)}^2 \Big(1+ \Big\lvert X^{\ep,v}_{s}\one_{s\le\tau_n}\Big\lvert_{\Upsilon^c} + \abs{\Gamma \big( (X^{\ep,v}_{s}\one_{s\le\tau_n})^{(\Upsilon)} \big)} \Big)^2 \ds \right)^{p/2} \right] \nonumber\\
    & \le C_\Upsilon^p \bE \left[ \int_0^t \abs{K_{ij}(t-s)}^2 \Big(1+\abs{X^{\ep,v}_{s}\one_{s\le\tau_n}}_{\Upsilon^c} +  \abs{\Gamma \big( (X^{\ep,v}_{s}\one_{s\le\tau_n})^{(\Upsilon)} \big)} \Big)^p \ds \right] 
    \left(\int_0^t \abs{K_{ij}(t-s)}^2 \ds
    \right)^{p/2-1} \nonumber\\
    & \le 3^{p-1} C_\Upsilon^p \norm{K}_2^{p-2}
     \int_0^t \abs{K_{ij}(t-s)}^2 \bE \bigg[ 1+\Big\lvert X^{\ep,v}_{s}\one_{s\le\tau_n} \Big\lvert_{\Upsilon^c}^p + \abs{\Gamma \big( (X^{\ep,v}_{s}\one_{s\le\tau_n})^{(\Upsilon)}\big)}^p \bigg] \ds  \nonumber\\
    & \le C_2 + C_2 \int_0^t \abs{K_{ij}(t-s)}^2 \bE \left[ \abs{X^{\ep,v}_{s}\one_{s\le\tau_n}}_{\Upsilon^c}^p\right] \ds,
\label{eq:ineqg}
\end{align}
for some~$C_2>0$. Applying the same calculations to the other terms and summing all the coefficients we fall back on~\eqref{eq:gronwall}. Taking the limit and applying Fatou's lemma again conclude the proof.

%%%%%%%%%%%%%%%%%%%%%%%%%%%%%%%%%%%%%%%%%%%%%%%%%%%%
%%%%%%%%%%%%%%%%%%%%%%%%%%%%%%%%%%%%%%%%%%%%%%%%%%%%

\subsection{LDP tightness: Proof of Lemma~\ref{lemma:tightLDP}}
\label{app:proof:lemma:tightLDP}
Let us fix~$p> 2 \vee 2/\gamma$, $N>0$, a family $\{v^\ep\}_{\ep>0}$ in~$\cA_N$ and $\ep>0$.
For clarity we will write~$b_u:=b_\ep(u,X^{\ep,v}_u)$ and~$\sigma_u:=\sigma_\ep(u,X^{\ep,v}_u)$ for all~$u\in\bT$. Then, for 
all~$0\le s<t\le T$, using Cauchy-Schwarz and BDG inequalities as in the previous proof we obtain:
\begin{align*}
    \bE \left[ \abs{ X^{\ep,v^{\ep}}_t -X^{\ep,v^{\ep}}_s}^p \right] 
    \le &  \, 6^{p-1} \bE \left[ \abs{ \int_0^s \big(K(t-u) - K(s-u) \big) b_u \du}^p \right]  \\
    & + 6^{p-1} \bE \left[\abs{ \int_s^t K(t-u) b_u \du }^p \right] \\
    & + 6^{p-1} N^{p/2} \bE \left[ \left( \int_0^s \abs{ \big(K(t-u) - K(s-u) \big) \sigma_u }^2 \du
    \right)^{p/2} \right] \\
    & + 6^{p-1} N^{p/2} \bE \left[ \left( \int_s^t \abs{ K(t-u) \sigma_u }^2 \du
    \right)^{p/2} \right] \\
    & + 6^{p-1} \vartheta_\ep^p  C_p \bE \left[ \left( \int_0^s \abs{ \big(K(t-u) - K(s-u) \big) \sigma_u }^2 \du
    \right)^{p/2} \right] \\
    & + 6^{p-1} \vartheta_\ep^p  C_p \bE \left[ \left( \int_s^t \abs{ K(t-u) \sigma_u }^2 \du
    \right)^{p/2} \right].
\end{align*}
In a first step we assume that all the coefficients satisfy the linear growth condition {\bf H3a}.
Analogous calculations to the proof of Lemma~\ref{lemma:boundv}, 
bounds on~$\sup\limits_{t\in\bT, \ep>0} \bE\left[\big\lvert X^{\ep,v^\ep}_t\big\lvert^{p}\right]$, linear growth from~{\bfseries H3a} and Assumption~\ref{assu:K} lead to 
\begin{align*}
    \bE  \left[\abs{X^{\ep,v^\ep}_t- X^{\ep,v^\ep}_s}^p \right]
    \le & \,C_1 \left( \abs{ \int_0^s \big(K(t-u) - K(s-u) \big)^2 \du }^{p/2} + \abs{\int_s^t K(t-u)^2 \du }^{p/2}   \right) \\
    = & \, C_1 \left( \abs{ \int_0^s \big(K(u+t-s) - K(u) \big)^2 \du }^{p/2} + \abs{\int_0^{t-s} K(u)^2 \du }^{p/2}   \right) \\
    \le & \, C_2 (t-s)^{\gamma p/2},
\end{align*}
for some $C_1,C_2>0$ independent of~$\ep,v^\ep,s,t$.
Again, if there are components such that only {\bf H3b} holds with Assumption~\ref{assu:Gammabound} then following the example of~\eqref{eq:ineqg} yields the same result.
Then Kolmogorov continuity theorem asserts that ~$X^{\ep,v^\ep}$ admits a version which is H\"older continuous on~$\bT$ of any order~$\alpha < \gamma/2 -1/p$, uniformly in~$ \ep>0$ because~$C_2$ does not depend on~$\ep$, and which satisfies~\eqref{eq:holderLDP}. Furthermore, Aldous theorem~\cite[Theorem 16.10]{Billingsey99} states that the sequence~$\{ X^{\ep,v^\ep}\}_{\ep>0}$ is tight.

%%%%%%%%%%%%%%%%%%%%%%%%%%%%%%%%%%%%%%%%%%%%%%%%%%%
%%%%%%%%%%%%%%%%%%%%%%%%%%%%%%%%%%%%%%%%%%%%%%%%%%%

\subsection{LDP compactness: Proof of Lemma~\ref{lemma:goodnessLDP}}
\label{app:proof:lemma:goodnessLDP}
We prove that for all~$N>0$, the sublevel sets
\[
L_N:=\{\phi\in\cW^d: I(\phi)\le N\}
\]
of the map~$I:\cW^d\to\bR$ given by~\eqref{eq:raterelax} or more precisely by
\begin{equation}
I(\phi)=\inf\bigg\{\half \int_0^T \abs{v_s}^2\ds:  v\in  L^2,  \phi_t= x_0 + \int_0^t K(t-s)\Big[b(s,\phi_s) + \sigma(s,\phi_s) v_s \Big] \D s   \bigg \}
\label{eq:rfcompactness}
\end{equation}
are compact. Fix~$N>0$ and consider an arbitrary sequence~$\cJ:=\{\phi^n\}_{n\in\bN}\subset L_N$; we will show that there exists a converging subsequence the limit of which belongs to~$L_N$. Interestingly enough, the proof parallels, in a deterministic context, the proofs of bound, H\"older continuity and convergence of~$X^{\ep,v}$.

{\bfseries Relative compactness.}
According to Arzelà-Ascoli's theorem, the family~$\cJ$  is relatively compact in~$\cW^d$ if and only if~$\{\phi^n_t\}$ is bounded uniformly in~$n\in\bN$ and in~$t\in\bT$ and~$\cJ$ is equicontinuous.
Moreover, for all~$n\in\bN$ and all~$t\in\bT$, there exists~$v^n \in L^2$ such that~$\half\int_0^T \abs{v^n_t}^2\dt \le N$ and~$\phi^n\in\cG^0_{v^n}$, which means~$v^n\in\cS_{2N}$ and
\[
\phi_t^n= x_0 + \int_0^t K(t-s)\Big[b(s,\phi^n_s) + \sigma(s,\phi^n_s) v^n_s \Big] \D s.
\]
Hence Remarks~\ref{rem:boundphi} and~\ref{rem:tightphi} grant the uniform bound and equicontinuity respectively. Therefore~$\cJ$ is relatively compact which entails that $L_N$ is relatively compact for any~$N>0$.

{\bfseries Closure.}
Let~$\{\phi^n\}_{n\in\bN}$ be a converging sequence of~$L_N$ and denote its limit by~$\phi\in\cW^d$.
The controls~$v^n$ associated to~$\phi^n$ through~\eqref{eq:rfcompactness} belong to~$\cS_{2N}$ which is a compact space with respect to the weak topology. Hence there exists a subsequence~$\{n_k\}_{k\in\bN}$ such that~$v^{n_k}$ converges weakly in~$ L^2$ to a limit~$v\in \cS_{2N}$ and~$\lim_{k\uparrow\infty} \phi^{n_k}=\phi$. Now let us prove that~$\phi\in L_N$. For clarity we replace~$n_k$ by~$n$ from now on. 
The convergence as~$n$ goes to~$+\infty$ and the continuity of the paths entail
\[
\sup_{n\in\bN} \sup_{t\in\bT} \big( \abs{\phi^n_t} + \abs{\phi_t} \big) <+\infty,
\]
such that the paths lie in compact subsets of~$\bR^d$ and {\bf H2} asserts that uniform continuity of the coefficients~$b$ and~$\sigma$ hold. Therefore they admit continuous moduli of continuity that we respectively name~$\rho_b$ and~$\rho_\sigma$.
Using Cauchy-Schwarz inequality and {\bf H3} we get that for all~$t\in\bT$:
\begin{align*}
    & \abs{ \int_0^t K(t-s) b(s,\phi^n_s) \D s  - \int_0^t K(t-s) b(s,\phi_s) \D s }
    \le \norm{K}_{ L^1} \normbT{\rho_b \big( \abs{\phi^n_\cdot-\phi_\cdot} \big) }, \\ 
    & \abs{ \int_0^t K(t-s) \sigma(s,\phi^n_s) v^n_s \D s  - \int_0^t K(t-s) \sigma(s,\phi_s) v_s \D s } \\
    & \quad \le \int_0^t \abs{ K(t-s) \big(\sigma(s,\phi^n_s)-\sigma(s,\phi_s) \big) v^n_s } \D s 
    + \int_0^t \abs{ K(t-s) \sigma(s,\phi_s) (v^n_s -v_s)} \D s \\
    & \quad \le \norm{K}_2 \norm{v^n}_2 \normbT{ \rho_\sigma( \abs{\phi^n_\cdot - \phi_\cdot}) } +   \normbT{\sigma(\phi)}\int_0^t \abs{ K(t-s) (v^n_s -v_s)} \D s,
\end{align*}
and both estimates converge towards zero as~$n$ tends to infinity. Therefore, for all~$t\in\bT$
\begin{align*}
\phi_t = \lim_{n\uparrow\infty} \phi^n_t 
&=   \lim_{n\uparrow\infty} \bigg( x_0 + \int_0^t K(t-s) \Big[ b(s,\phi^n_s) + \sigma(s,\phi^n_s) v^n_s \Big] \D s \bigg) \\
&= x_0 + \int_0^t K(t-s) \Big[ b(s,\phi_s) + \sigma(s,\phi_s) v_s \Big] \D s,
\end{align*}
so that~$\phi\in L_N$ since~$v\in \cS_{2N}$, which concludes the proof of the closure and therefore of the compactness of~$L_N$.

%%%%%%%%%%%%%%%%%%%%%%%%%%%%%%%%%%%%%%%%%%%%%%%%%%%%%
%%%%%%%%%%%%%%%%%%%%%%%%%%%%%%%%%%%%%%%%%%%%%%%%%%%%%
\section{Technical moderate deviations proofs}
%%%%%%%%%%%%%%%%%%%%%%%%%%%%%%%%%%%%%%%%%%%%%%%%%%%%%
%%%%%%%%%%%%%%%%%%%%%%%%%%%%%%%%%%%%%%%%%%%%%%%%%%%%%

\subsection{MDP moment bounds: Proof of Lemma~\ref{lemma:boundeta}}
\label{app:proof:lemma:boundeta}
Let~$p\ge2$, $N>0$, $v\in\cA_N$, $\ep>0$ and $t\in\bT$. 
Starting from~\eqref{eq:etaepv}, we use Cauchy-Schwarz and BDG inequalities to obtain
\begin{align}
\label{eq:etasplit}
    \bE \left[ \abs{\eta^{\ep,v}_t }^p   \right]
    \le & \, 5^{p-1} \frac{\abs{X^\ep_0-x_0}^p}{\big(\vartheta_\ep h_\ep   \big)^p} \\
    & + 5^{p-1}  
    \bE \abs{ \int_0^t K(t-s) \frac{ b_\ep \big(s,\Theta^{\ep,v}_s \big)
    -b \big(s,\Theta^{\ep,v}_s \big)}{\vartheta_\ep h_\ep  } \ds}^p \nonumber \\
    & + 5^{p-1} \bE \abs{ \int_0^t K(t-s) \frac{ b \big(s,\Theta^{\ep,v}_s \big)-b(s,\overbar{X}_s )}{\vartheta_\ep h_\ep  } \D s }^p \nonumber \\
    & + 5^{p-1} N^{p/2}  \bE\left[ \left( \int_0^t \abs{ K(t-s)   \sigma_\ep \big(s,\Theta^{\ep,v}_{s}  \big) }^2 \D s \right)^{p/2} \right]  \nonumber \\
    & + \frac{5^{p-1} C_p}{h_\ep  ^p} \bE   \left[ \left( \int_0^t \abs{ K(t-s)  \sigma_\ep \big(s,\Theta^{\ep,v}_{s}  \big)}^2 \ds \right)^{p/2}\right] . \nonumber
\end{align}
The first term converges by {\bf H7} and is thus bounded.
Notice that {\bf H8} entails 
\begin{equation*}
    \bE \abs{ \int_0^t K(t-s) \frac{ b_\ep \big(s,\Theta^{\ep,v}_s  \big)
    -b \big(s,\Theta^{\ep,v}_s  \big)}{\vartheta_\ep h_\ep  } \ds}^p
    \le \left(\frac{\nu_\ep}{\vartheta_\ep  h_\ep  } \right)^p 
    \bE \abs{ \int_0^t K(t-s) \Xi\big(\Theta^{\ep,v}_{s} \big) \ds}^p,
\end{equation*}
which is bounded because~$\nu_\ep (\vartheta_\ep h_\ep  )^{-1}$ tends to zero and, using Cauchy-Schwarz and Jensen's inequalities in the same way as~\eqref{eq:Holdertrickb} and the bound~\eqref{eq:Xibound},
\[
\bE\abs{ \int_0^t K(t-s) \Xi\big( \Theta^{\ep,v}_s\big) \ds}^p \le
\norm{K}^p_2  t^{p/2-1} \sup_{s\le T} \bE \Big[ \abs{\Xi\big( \Theta^{\ep,v}_{s}\big)}^{p} \Big] \le C_1,
\]
where~$C_1$ is a positive constant that does not depend on~$\ep$.
%%%%%%%%%%%%%
Since $b$ is globally Lipschitz continuous, there exists $C_b>0$ such that for all~$s\in\bT$:
\begin{equation}
\label{eq:Lipschitz}
\abs{ b \big(s,\overbar{X}_s + \vartheta_\ep h_\ep   \eta^{\ep,v}_s  \big)-b(s,\overbar{X}_s)} \le C_b\,  \vartheta_\ep h_\ep   \abs{\eta^{\ep,v}_s}\, .
\end{equation}
Therefore, using Cauchy-Schwarz and Jensen's inequalities again
\begin{align*}
\bE \abs{ \int_0^t K(t-s) \frac{ b \big(s,\Theta^{\ep,v}_s \big)-b(s,\overbar{X}_s)}{\vartheta_\ep h_\ep  } \D s }^p
&\le \bE \abs{ \int_0^t K(t-s) C_b\, \abs{\eta^{\ep,v}_s}\,  \ds}^p \\
&\le C_b^p t^\frac{p^2+4}{2(p-2)} \norm{K}_2^{p-2} \, \int_0^t \abs{K(t-s)}^2 \bE \big[ \abs{\eta^{\ep,v}_s}^p\, \big] \ds.
\end{align*}
%%%%%%%%%%%%%%%%%%%%%%%%%%%%%%%%%%%%%%%%%%%
The last two terms of~\eqref{eq:etasplit} are also uniformly bounded in~$n$ and~$\ep$ because, similarly to~\eqref{eq:Holdertrick},
\begin{align}
\label{eq:boundnoise}
    &\bE\left[ \left( \int_0^t \abs{ K(t-s)  \sigma_\ep \big(s,\Theta^{\ep,v}_{s} \big) }^2 \D s \right)^{p/2} \right] \nonumber \\
    &\le \bE \left[ \left(\int_0^t \abs{K(t-s)}^2 \abs{ \sigma_\ep \big(s,\Theta^{\ep,v}_{s} \big) }^{p} \ds
    \right) \left( \int_0^t \abs{K(t-s)}^2 \ds \right)^\frac{p-2}{2}
    \right] \nonumber \\
    & \le \norm{K}_2^{p}  C_L\big( 1+ \sup_{t\in\bT} \bE\left[ \abs{\Theta^{\ep,v}_{t}}^{p} \right] \big),
\end{align}
by H\"older's inequality and the linear growth condition {\bf H3a}. If the latter fails we rely on {\bf H3b}, Assumption~\ref{assu:Gammabound} and the same calculations as in~\eqref{eq:ineqg} to obtain a similar bound.

Overall this results in the existence of a constant~$C_2>0$
independent of~$\ep,v,t$ such that
\[
\bE \left[\abs{\eta^{\ep,v}_t}^p  \right] \le C_2 + C_2 \int_0^t \abs{K(t-s)}^2 \bE \left[\abs{\eta^{\ep,v}_s}^p  \right]\ds,
\qquad\text{for all }t\in\bT,
\]
and Lemma~\ref{lemma:gronwallK} yields the bound uniform in~$\ep$ and~$t$.

%%%%%%%%%%%%%%%%%%%%%%%%%%%%%%%%%%%%%%%%%%%%%%%%%%%%%
%%%%%%%%%%%%%%%%%%%%%%%%%%%%%%%%%%%%%%%%%%%%%%%%%%%%%
\subsection{MDP tightness: Proof of Lemma~\ref{lemma:tightMDP}}
\label{app:proof:lemma:tightMDP}
Let~$p> 2 \vee 2/\gamma$, $N>0$, a sequence~$\{v^\ep\}_{\ep>0}$ in~$\cA_N$, $\ep>0$, and $0\le s < t\le T$. We proceed as in Lemma~\ref{lemma:tightLDP}; starting from~\eqref{eq:etaepv}, applying consecutively~{\bf H8}, then~\eqref{eq:Lipschitz}, Cauchy-Schwarz and BDG inequalities we obtain
\begin{align*}
    \bE & \left[\abs{\eta^{\ep,v^\ep}_t- \eta^{\ep,v^\ep}_s}^p \right]\\
    \le   & 8^{p-1} \bE
    \abs{ \frac{\nu_\ep}{\vartheta_\ep  h_\ep  } \int_0^s \big(K(t-u) - K(s-u) \big) \Xi\big(  \Theta^{\ep,v^\ep}_u \big) \du}^p \\
    & + 8^{p-1} \bE \abs{ \frac{\nu_\ep}{\vartheta_\ep  h_\ep  } \int_s^t K(t-u) \Xi\big(  \Theta^{\ep,v^\ep}_u \big) \du }^p   \\
    & + 8^{p-1} \bE \abs{ \int_0^s \big(K(t-u)-K(s-u)\big) C_b\,\abs{\eta^{\ep,v^\ep}_u} \du }^p \\
    & + 8^{p-1} \bE \abs{ \int_s^t K(t-u) C_b\, \abs{\eta^{\ep,v^\ep}_u}  \du }^p \\
    & + 8^{p-1} N^{p/2}  \bE\left[ \left( \int_0^s \abs{ \big(K(t-u)-K(s-u)\big)   \sigma_\ep \big(u,\Theta^{\ep,v^\ep}_u \big) }^2  \du \right)^{p/2} \right] \\
    & + 8^{p-1} N^{p/2}  \bE\left[ \left( \int_s^t \abs{ K(t-u)   \sigma_\ep \big(u,\Theta^{\ep,v^\ep}_u \big) }^2  \du \right)^{p/2} \right] \\
    & + \frac{8^{p-1} C_p}{h_\ep  ^p} \bE   \left[ \left( \int_0^s \abs{ \big(K(t-u)-K(s-u)\big)  \sigma_\ep \big(u,\Theta^{\ep,v^\ep}_u \big)}^2 \du \right)^{p/2}\right] \\
    & + \frac{8^{p-1} C_p}{h_\ep  ^p} \bE \left[ \left( \int_s^t \abs{ K(t-u)  \sigma_\ep \big(u,\Theta^{\ep,v^\ep}_u \big)}^2 \du \right)^{p/2}\right].
\end{align*}
In the first four terms, Cauchy-Schwarz inequality allows to separate the kernels from the random variables. 
For the last four terms, analogous calculations to~\eqref{eq:Holdertrick} achieve a similar separation of kernels and random variables. 
Then linear growth or~\eqref{assu:Gammabound} and bounds on~$\sup_{t\in\bT,\ep>0} \bE\left[\big\lvert \Xi\big( \Theta^{\ep,v^\ep}_t \big) \big\lvert^{p}\right]$ and~$\sup_{t\in\bT,\ep>0} \bE\left[\big\lvert\eta^{\ep,v^\ep}_t \big\lvert^{p}\right]$ lead to the existence of $C_1>0$ independent of~$t$ and~$\ep$ such that
\begin{align*}
    \bE \left[\abs{\eta^{\ep,v^\ep}_t- \eta^{\ep,v^\ep}_s}^p \right]
    & \le C_1 \left( \abs{ \int_0^s \big(K(t-u) - K(s-u) \big)^2 \du }^{p/2} + \abs{\int_s^t K(t-u)^2 \du }^{p/2}   \right) \\
    & = C_1 \left( \abs{ \int_0^s \big(K(u+t-s) - K(u) \big)^2 \du }^{p/2} + \abs{\int_0^{t-s} K(u)^2 \du }^{p/2}   \right).
\end{align*}
Hence Assumption~\ref{assu:K} yields the existence of a constant $C_2>0$ such that
\begin{align*}
    \bE \left[\abs{\eta^{\ep,v^\ep}_t- \eta^{\ep,v^\ep}_s}^p \right]
    \le C_2 (t-s)^{\gamma p/2}.
\end{align*}
Then Kolmogorov continuity theorem asserts that  ~$\eta^{\ep,v^\ep}$ admits a version which is H\"older continuous on~$\bT$ of any order~$\alpha < \gamma/2 -1/p$, uniformly in~$\ep>0$ and which satisfies~\eqref{eq:holderMDP}. Furthermore, Aldous theorem~\cite[Theorem 16.10]{Billingsey99} states that the sequence~$\{ \eta^{\ep,v^\ep}\}_{\ep>0}$ is tight.

%%%%%%%%%%%%%%%%%%%%%%%%%%%%%%%%%%%%%%%%%%%%%%%%%%%%%
%%%%%%%%%%%%%%%%%%%%%%%%%%%%%%%%%%%%%%%%%%%%%%%%%%%%%
\subsection{MDP weak convergence: Proof of Lemma~\ref{lemma:weakcvgMDP}}
\label{app:proof:lemma:weakcvgMDP}

We have shown in Lemma~\ref{lemma:tightMDP} that for any subsequence~$\{\ep_k\}_{k\in\bN}$,~$\{\eta^{\ep_k,v^{\ep_k}} \}_{k\in\bN}$ and $\{v^{\ep_k}\}_{k\in\bN}$ are tight as families of random variables with values in $\cW^d$ and $\cS_N$ respectively. 
By Skorohod representation theorem we can work with almost sure convergence for the purpose of identifying the limit.
Hence there exists a subsubsequence, denoted hereafter~$\big\{\eta^k,v^k \big\}$, that converges almost surely in the product topology on~$\cW^d\times \cS_N$ to some~$\cW^d\times \cS_N$-valued limit~$(\eta^0,v)$ in a possibly different probability space~$(\Omega^0,\cF^0,\bP^0)$ as~$n$ tends to~$+\infty$. We also denote~$\ep_k, b_k, \sigma_k, X_0^k, \Theta^k$ along this subsequence.

The convergence of the couple also takes place in distribution, and we follow the same method as in the LDP case which comes from \cite{Chiarini14}. For all~$t\in[0,T]$, let~$\Psi_t:\cS_N \times \cW^d \to \bR$ such that
\[
\Psi_t(f,\omega):=\abs{\omega_t - \int_0^t K(t-s) \left[\nabla b(s,\overbar{X}_s)\omega_s + \sigma(s,\overbar{X}_s)f_s\right] \D s}\wedge 1.
\]
Clearly,~$\Psi_t$ is bounded and one can show its continuity along the same lines as in the LDP proof but in a simpler way because of the linearity.
Therefore 
\[
\lim_{k\uparrow\infty} \bE\left[\Psi_t(v^k,\eta^k)\right] = \bE^0\left[\Psi_t(v,\eta^0)\right],
\]
and we prove that the left-hand side is actually equal to zero.
By {\bf H5} and Taylor's formula there exists a sequence of $\bR^d$-valued stochastic processes $\{R^\ep\}_{\ep>0}$ such that
\begin{align}
\label{eq:taylor}
    \frac{ b \big(s,\overbar{X}_s + \vartheta_\ep h_\ep   \eta^{\ep,v}_s \big)-b(s,\overbar{X}_s)}{\vartheta_\ep h_\ep  } = \nabla b\big(s, \overbar{X}_s \big) \eta^{\ep,v}_s + R^\ep(s),
\end{align}
and a constant $C_R>0$ such that
\[
\abs{R^\ep(s)} \le C_R \vartheta_\ep h_\ep \abs{\eta^{\ep,v}_s}^2.
\]
We recall that $\normbT{\nabla b(\cdot,\overbar{X})}$
is finite by {\bf H5} and observe that, by~\eqref{eq:boundveta}:
\begin{equation}
\bE \big[\abs{ R^\ep(u)}^p \big]\le \big(C_R \vartheta_\ep h_\ep \big)^p \bE \left[ \big\lvert\eta^{\ep,v^\ep}_s\big\lvert^{2p}\right]  <\infty.
\label{eq:taylorrest}
\end{equation}
Again starting from~\eqref{eq:etaepv},
we use~{\bf H8}, the Taylor estimate~\eqref{eq:taylor} and It\^o isometry to get
\begin{align*}
    \bE \left[\Psi_t(\eta^k,v^k)^2 \right]
    \le &  \frac{5\abs{X^k_0-x_0}^2}{\vartheta_{\ep_k}^2 h_{\ep_k}^2} \\
    & + 5 \left(\frac{\nu_{\ep_k}}{\vartheta_{\ep_k} h_{\ep_k}}\right)^2 \bE \abs{ \int_0^t K(t-s) \Xi\big(\Theta^{k}_s\big) \ds}^2 \\
    & + 5  \bE \abs{ \int_0^t K(t-s) R^\ep(s) \ds }^2 \\
    & + 5  \bE\abs{ \int_0^t  K(t-s)  \Big[ \sigma_k \big(s,\Theta^{k}_s \big)-\sigma\big(s,\overbar{X}_s\big)\Big] v^k_s \ds }^2 \\
    & + \frac{5}{h_{\ep_k}^2}  \bE\left[  \int_0^t \abs{ K(t-s)  \sigma_k \big(s,\Theta^{k}_s \big)}^2 \ds    \right] \\
     =: & 5 \big(\text{I}_k +\text{II}_k + \text{III}_k+ \text{IV}_k+ h_{\ep_k}^{-2}  \text{V}_k \big).
\end{align*}
{\bf H7} and {\bf H8} tell us that~$\text{I}_k +\text{II}_k$ tends to zero as~$\ep$ goes to zero while an application of Cauchy-Schwarz inequality and the bound~\eqref{eq:taylorrest} yields the same conclusion for $\text{III}_k$.

To deal with IV$_k$, recall that~$\Theta^{k}$ converges in distribution towards~$\overbar{X}$ as~$k$ tends to infinity. Since~$\overbar{X}$ is deterministic, the convergence actually takes place in probability, with respect to the topology of uniform convergence. 
Moreover~$\Theta^{k}$ is uniformly bounded in~$L^{r}(\Omega)$ for all~$r>p$, thus the family~$\{\abs{\Theta^{k}}^p\}_{k\ge0}$ is uniformly integrable. Therefore the convergence also occurs with respect to the~$L^p(\Omega)$-norm.

The modulus of continuity of~$\sigma$ is only available on compact sets of~$\bT\times\bR^d$ so we define a constant~$M> \normbT{\overbar{X}}$ and introduce the following sets, for each~$k\in\bN$:
\[
E_k := \bigg\{ \omega\in\Omega: \normbT{ \Theta^{k}(\omega) - \overbar{X}} \le M \bigg\},
\]
with the observation that~$\lim_{k\uparrow\infty}\bP ( E_k)=1$ thanks to the previous argument. Since~$\overbar{X}$ is uniformly bounded,~$\abs{\Theta^{k}_t (\omega)}\le 2M$ for all~$t\in\bT,\omega\in E_k$,~$k\ge0$. 
Therefore using Cauchy-Schwarz inequality,
\begin{align*}
    \bE \left[\text{IV}_k \right]
    & \le 2N  \bE\left[\int_0^t  \abs{K(t-s)}^2 \abs{ \sigma_k \big(s,\Theta^{k}_s \big) -\sigma \big(s,\Theta^{k}_s \big)}^2\D s
    (\one_{E_k} + \one_{E_k^c})\right] \\
    & + 2N \bE\left[\int_0^t \abs{ K(t-s)}^2 \abs{
    \sigma\big(s,\Theta^{k}_s\big) - \sigma(s,\overbar{X}_s) }^2 \ds
    (\one_{E_k} + \one_{E_k^c})\right],
\end{align*}
where we will use the localisation to obtain convergence in the first term and H\"older continuity in the second. Let us assume for the moment that linear growth {\bf H3a} holds. We use that $\Theta^k$ is uniformly bounded by $2M$ in $E_k$ and linear growth for both $\sigma_k$ and $\sigma$ to obtain
\begin{align}
    \sup_{s\in\bT} \bE \left[ \abs{
    \sigma_k\big(s,\Theta^{k}_s\big) - \sigma(s,\Theta^k_s) }^2 \right]
    = & \sup_{s\in\bT} \bE \left[ \abs{
    \sigma_k\big(s,\Theta^{k}_s\big) - \sigma(s,\Theta^k_s) }^2 \one_{E_k} \right]
    + \sup_{s\in\bT} \bE \left[ \abs{
    \sigma_k\big(s,\Theta^{k}_s\big) - \sigma(s,\Theta^k_s) }^2 \one_{E_k^c} \right]\nonumber \\
    \le & \normbT{\norm{\sigma_k - \sigma}_{2M}}^2 + \sup_{s\in\bT} \bE \left[ \one_{E_k^c} C_L^2 \big( 2 + 2 \lvert \Theta^{k}_s\lvert \big)^2 \right],
\label{eq:sigmacvg}
\end{align}
which tends to zero as~$k$ goes to infinity because of {\bf H2} for the first term 
and because $\bP(\Omega\setminus E_k)$ tends to zero for the second. 
Moreover, by~{\bf H6}, there exists~$\delta>0$ such that $\sigma$ is locally $\delta$-H\"older continuous thus there exist $C_{2M}>0$ such that
\[
\sup_{s\in\bT} \bE \left[ \one_{E_k} \abs{
    \sigma\big(s,\Theta^{k}_s\big) - \sigma(s,\overbar{X}_s) }^2 \right]
    \le  \sup_{s\in\bT} \bE \left[ \one_{E_k} C_{2M} \abs{\Theta^{k}_s - \overbar{X}_s}^{2\delta}
    \right],
\]
which tends to zero.
Finally, linear growth leads to
\begin{equation}
\sup_{s\in\bT} \bE \left[ \one_{E_k^c} \abs{
    \sigma\big(s,\Theta^{k}_s\big) - \sigma(s,\overbar{X}_s) }^2 \right]
    \le
    \bE \left[ \one_{E_k^c} C_L^2 \big( 2 + \lvert \Theta^{k}_s\lvert + \abs{\overbar{X}_s} \big)^2 \right],
\label{eq:sigmacty}
\end{equation}
which also tends to zero because $\bP(\Omega\setminus E_k)$ tends to zero.
If only {\bf H3b} with Assumption~\ref{assu:Gammabound} hold then a different bound depending on~\eqref{eq:UpsilonGrowth} would replace those in~\eqref{eq:sigmacvg} and~\eqref{eq:sigmacty}, by noticing that~$\overbar{X}=\cG^0(0)$. In both cases the above estimates tend to zero as~$k$ tends to infinity, hence~$\bE \left[\text{IV}_k \right]$ converges towards zero. Finally, $\{$V$_k\}_{k\in\bN}$ is uniformly bounded across~$k\ge0$ as~\eqref{eq:boundnoise} shows, thus~$h_{\ep_k}^{-p} \text{V}_k$ tends to zero.
We have proved that
\[
\lim_{k\uparrow\infty} \bE\left[\Psi_t(v^k,\eta^k)\right] = 0,
\]
and this entails that the limit~$\eta^0$ satisfies~\eqref{eq:MDPlimit}~$\bP^0$-almost surely, for all~$t\in\bT$. Since~$\eta^0$ has continuous paths, this holds for all~$t\in\bT$, $\bP^0$-almost surely and the solution is unique therefore we conclude that~$\eta^0=\psi$. Every subsequence has a subsequence for which this convergence holds therefore~$\eta^{\ep,v^\ep}$ converges weakly towards~$\psi$ as~$\ep$ goes to zero.

%%%%%%%%%%%%%%%%%%%%%%%%%%%%%%%%%%%%%%%%%%%%%%%%%%%%%
%%%%%%%%%%%%%%%%%%%%%%%%%%%%%%%%%%%%%%%%%%%%%%%%%%%%%

\bibliographystyle{abbrv}
\bibliography{bib}

\end{document}